\newcommand{\Ric}[2]{R_{#1 \overline{#2}}}
\newtheorem{claim}{Claim}[section]
\newtheorem{theorem}{Theorem}[section]
\newtheorem{proposition}{Proposition}[section]
\newtheorem{lemma}{Lemma}[section]
\newtheorem{definition}{Definition}[section]
\newtheorem{corollary}{Corollary}[section]
\newtheorem{remark}{Remark}[section]
\def\Vol{{\rm Vol}}
\def\Fut{{\rm Fut}}
\def\PSH{{\rm PSH}}
\def\Ric{{\rm Ric}}
\def\Re{{\rm Re}}
\def\osc{{\rm osc}}
\def\Aut{{\rm Aut}}
\begin{document}

\title{Existence of K\"{a}hler-Ricci solitons on smoothable Q-Fano varieties }

\author{Yan Li}

\address{Institute of Mathematics, Hunan University, Changsha, China}

\email{liyandota@hotmail.com}

\thanks{}

\begin{abstract}
In this article we prove the existence of K\"{a}hler-Ricci solitons on smoothable, K-stable $\mathbb{Q}$-Fano varieties. We also investigate the behavior of twisted K\"{a}hler-Ricci solitons in the Gromov-Hausdorff topology under this smoothing family.
\end{abstract}

\maketitle

{\footnotesize  \tableofcontents}


\section{Introduction}
A basic problem in K\"{a}hler geometry is whether the Fano manifold $M$ admits a K\"{a}hler-Einstein metric. This problem is confirmed recently by Chen-Donaldson-Sun \cite{CDS1} \cite{CDS2} \cite{CDS3} and Tian \cite{T97} \cite{T15}, which claims that the existence of K\"{a}hler-Einstein metric on $M$ is equivalent to the algebro-geometric notion of K-stability. These techniques solving this problem play an important role in many other aspects. For instance, on one hand, this problem is reproved through the Aubin's continuity method developed by Datar-Sz\'{e}kelyhidi \cite{S1} \cite{DS}. Moreover, this continuity method is also adapted to deal with the problem that whether the Fano manifold $M$ admits a K\"{a}hler-Ricci soliton \cite{DS}. On the other hand, motivated by the study of the compactification of the moduli spaces of smooth K\"{a}hler-Einstein Fano manifold, Spotti-Sun-Yao \cite{SSY} investigate the existence of K\"{a}hler-Einstein metrics on smoothable $\mathbb{Q}$-Fano varieties by using the conic continuity method on a flat family. Combining these arguments, a natural problem is whether the existence of K\"{a}hler-Ricci soliton on a smoothable $\mathbb{Q}$-Fano variety $M$ is equivalent to the algebro-geometric notion of K-stable which is defined in \cite{DS}. It is notable that Berman-Nystr\"{o}m \cite{BN14} show that the existence of K\"{a}hler-Ricci soliton implies K-stable without any assumptions. Therefore, in this article we mainly consider the other side by applying the Aubin's continuity method on a flat family.

Before stating main results, we recall some basic definitions. A $\mathbb{Q}$-Fano variety $M$ is a normal projective variety with at worst log-terminal singularities and with ample $\mathbb{Q}$-Cartier anticanonical divisor $K_M^{-1}$. A $\mathbb{Q}$-Fano variety $M$ is called $\mathbb{Q}$-Gorestein smoothable if there is a flat projective family $\pi:\mathcal{M}\rightarrow \Delta$ over a disk $\Delta$ in $\mathbb{C}$ such that $M\cong M_0:=\pi^{-1}(0)$, $M_t:=\pi^{-1}(t)$ are smooth for $t\neq 0$ and $\mathcal{M}$ has a relatively ample $\mathbb{Q}$-Cartier anticanonical divisor $K_{\mathcal{M}/\Delta}^{-1}$. Proposition 1.41 \cite{KM} says that, by possibly shrinking $\Delta$, $M_t$ is a Fano manifold for $t\neq 0$ and there exists an integer $m>0$ such that $K_{M_t}^{-m}$ are very ample line bundles for all $t\in \Delta$. Let $\mathcal{V}$ be a holomorphic vector field on $\mathcal{M}$ which is only tangent to the fibers and belongs to a reductive algebra of reductive automorphism subgroup (c.f.\cite{Zhu00}) and $\mathcal{T}$ be the compact group induced by Im$\mathcal{V}$. Embed $\mathcal{M}$ into $\Delta\times \mathbb{CP}^N$ by using $\mathcal{T}$-invariant sections of $K_{\mathcal{M}/\Delta}^{-m}$ and denote $\alpha_t$ by the suitable scaling of the Fubuni-Study metric $\frac{1}{m}\omega_{FS}$ on $M_t$ for $t\in \Delta$.

Next we recall the definition of K-stable (c.f.\cite{DS}). Suppose that there exists a $\mathbb{C^*}$ action $\rho$ generated by a holomorphic vector field $W$ on $M_0$ which commutes with $V_0:=\mathcal{V}|_{M_0}$. Assume that $X:=\lim_{t\rightarrow 0}\rho(t)\cdot M_0$ is a $\mathbb{Q}$-Fano variety. We take the limit
$$
\alpha^*:=\lim_{t\rightarrow 0}\rho(t)\cdot \alpha_0, V^*:=\lim_{t\rightarrow 0}\rho(t)\cdot V_0.
$$
The $\mathbb{C^*}$ action $\rho$ defines a $T_0$-equivariant special degeneration ($T_0:=\mathcal{T}|_{M_0}$) and its twisted Futaki invariant is defined to be
\begin{align*}
\Fut_{(1-\lambda)\alpha_0,V_0}(M_0,W) &:=\Fut_{(1-\lambda)\alpha^*,V^*}(X,W_0)=\Fut_{V^*}(X,W_0)  \\
                                     &-\frac{1-\lambda}{\int_X \omega_\phi^n}\Big[\int_X
                                     \theta_{W_0}(e^{\theta_{V^*}}-1)\omega_\phi^n+n\int_X\theta_{W_0}
                                     (\alpha^*-\omega_\phi)\wedge \omega_\phi^{n-1} \Big],
\end{align*}
where $W_0$ is the induced vector field on $X$ by $W$, $\lambda\in(0,1]$, $\omega_\phi$ is the restriction of a suitable scaling of the Fubini-Study form on $X$, $\theta_{W_0}$ and $\theta_{V^*}$ are Hamiltonian functions and
$$
\Fut_{V^*}(X,W_0):=\frac{\int_X\theta_{W_0}e^{\theta_{V^*}} \omega_\phi^n}{\int_X \omega_\phi^n}.
$$

\begin{definition}
The triple $(M_0,(1-\lambda)\alpha_0,V_0)$ is K-semistable if $\Fut_{(1-\lambda)\alpha_0,V_0}(M_0,W)\geq 0$ for all $W$ as above. The triple is K-stable if in addition equality holds only when $(X,(1-\lambda)\alpha^*,V^*)$ is biholomorphic to $(M_0,(1-\lambda)\alpha_0,V_0)$.
\end{definition}

The main theorem of this article is the following result, which extends the consequences of \cite{DS}.
\begin{theorem}\label{T1.1}
Let $\pi:\mathcal{M}\rightarrow\Delta$ be a $\mathbb{Q}$-Gorestein smoothing of a $\mathbb{Q}$-Fano variety $M_0$ and $\mathcal{V}$ be a reductive holomorphic vector field on $\mathcal{M}$, which preserves the fibers. If $(M_0,V_0)$ is K-stable, then $M_0$ admits a K\"{a}hler-Ricci soliton.
\end{theorem}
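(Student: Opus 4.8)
We outline the strategy. The plan is to run Aubin's continuity method for twisted K\"ahler--Ricci solitons over the whole smoothing family $\pi:\mathcal{M}\to\Delta$, invoking the K-stability of $(M_0,V_0)$ precisely at the parameter value where solutions could fail to exist. Embed $\mathcal{M}\hookrightarrow\Delta\times\mathbb{CP}^N$ by $\mathcal{T}$-invariant sections of $K_{\mathcal{M}/\Delta}^{-m}$ and set $\alpha_t=\tfrac1m\omega_{FS}|_{M_t}$; these form a smooth $\mathcal{T}$-invariant family of reference forms, so each fibrewise field $V_t=\mathcal{V}|_{M_t}$ carries a Hamiltonian potential $\theta_{V_t}$ relative to any metric cohomologous to $\alpha_t$ and invariant under the closure of its $\mathcal{T}$-orbit. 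For $\lambda\in(0,1]$ and $t\in\Delta$ consider the twisted soliton equation
\begin{equation*}
\Ric(\omega)-L_{V_t}\omega=\lambda\,\omega+(1-\lambda)\,\alpha_t,\qquad \omega=\alpha_t+\ddbar\varphi\in c_1(M_t),
\end{equation*}
a complex Monge--Amp\`ere equation whose solvability is equivalent to the properness of an associated $\lambda$-twisted modified Ding functional (in the spirit of Tian--Zhu). A solution at $(t,\lambda)=(0,1)$ with bounded potential, smooth on the regular locus $M_0^{\mathrm{reg}}$, is exactly a K\"ahler--Ricci soliton on $M_0$ (by the regularity theory for soliton-type Monge--Amp\`ere equations on klt Fano varieties); hence it suffices to prove $1\in S$, where $S$ denotes the set of $\lambda\in(0,1]$ for which the equation above admits a weak solution on $M_0$.

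\textbf{Base case and openness.} For $\lambda$ near $0$ the twist $(1-\lambda)\alpha_t$ dominates, the $\lambda$-twisted modified functional is proper uniformly in $t$, and the equation is solvable on every fibre $M_t$, $t\in\Delta$, in particular weakly on $M_0$, so $S\ne\varnothing$. On the regular locus the linearization is the $\lambda$-shifted drift Laplacian $\Delta_\omega+\nabla_{V_0}-\lambda$, which has trivial kernel for $\lambda<1$ by the weighted Bochner--Futaki identity; together with the uniform a priori estimates discussed below and the smoothing family (which lets one deform a solution on $M_0$ to nearby fibres $M_t$), this gives openness of $S$ in $(0,1)$.

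\textbf{Closedness via K-stability.} The core is the a priori estimate at $\lambda_0:=\sup S$. Take $\lambda_i\uparrow\lambda_0$ with solutions $\omega_i$ on $M_0$ and, using openness in the $t$-direction, realize them as solutions on smooth fibres $M_{t_i}$ with $t_i\to0$ --- this is the point where $\mathbb{Q}$-Gorenstein smoothability is used, to make genuine Riemannian geometry available. Bounding the modified energies along the family and using the uniform reference metrics yields uniform non-collapsing volume and diameter bounds, so $(M_{t_i},\omega_i)$ Gromov--Hausdorff subconverges to a compact metric space $W$. The partial $C^0$-estimate --- Donaldson--Sun theory in the form adapted to twisted K\"ahler--Ricci solitons and to this degenerating family, along the lines of \cite{DS} and \cite{SSY} --- shows $W$ is a normal $\mathbb{Q}$-Fano variety embedded by $K_W^{-m}$ into the same $\mathbb{CP}^N$, carrying a weak twisted soliton at parameter $\lambda_0$ together with the limiting vector field and torus action. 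A Luna-slice / Hilbert-scheme argument --- using that $M_{t_i}\to M_0$ in the flat family, semicontinuity, and $\mathcal{T}$-equivariance --- identifies $W$ with a $T_0$-equivariant special degeneration $X=\lim_{s\to0}\rho(s)\cdot M_0$, with $\rho$ a $\mathbb{C}^*$-action commuting with $V_0$; differentiating the modified energy along $\rho$ gives $\Fut_{(1-\lambda_0)\alpha_0,V_0}(M_0,W)\le 0$. On the other hand K-stability of $(M_0,V_0)$, together with the monotonicity of the twisted Futaki invariant in the parameter built into its definition, implies $(M_0,(1-\lambda_0)\alpha_0,V_0)$ is K-stable, hence $\Fut_{(1-\lambda_0)\alpha_0,V_0}(M_0,W)\ge 0$ with equality only if $(X,(1-\lambda_0)\alpha^*,V^*)\cong(M_0,(1-\lambda_0)\alpha_0,V_0)$. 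Therefore $\Fut_{(1-\lambda_0)\alpha_0,V_0}(M_0,W)=0$, $W\cong M_0$ with matching data, and the limit metric transports to a weak solution on $M_0$ at $\lambda_0$; thus $\lambda_0\in S$. Since $S$ is open in $(0,1)$, this is impossible for $\lambda_0<1$, so $\lambda_0=1$ and $1\in S$, which completes the proof.

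\textbf{Main obstacle.} The crux is the closedness step: establishing the partial $C^0$-estimate uniformly along the degenerating family $M_{t_i}\to M_0$ in the presence of both the soliton vector field and the twisting term, with all auxiliary estimates (potential and energy bounds) uniform as $t\to0$, and then correctly identifying the Gromov--Hausdorff limit as a special degeneration of the \emph{fixed} central fibre $M_0$ rather than of the varying $M_{t_i}$, so that the K-stability of $(M_0,V_0)$ can actually be applied. The remaining steps are adaptations of arguments in \cite{DS}, \cite{SSY} and \cite{CDS1,CDS2,CDS3}.
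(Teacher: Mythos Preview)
Your outline follows the same continuity-method strategy as the paper, and the role you assign to K-stability in the closedness step is essentially correct. However, there is a genuine gap at the heart of your closedness argument, and your openness argument is not the one that actually works here.

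\medskip
\textbf{The missing $L^\infty$ mechanism.} You write ``bounding the modified energies along the family and using the uniform reference metrics yields uniform non-collapsing volume and diameter bounds''. This is precisely the step that is \emph{not} automatic, and it is where most of the work in the paper lies. The diameter and non-collapsing bounds follow once you have a uniform $L^\infty$ bound on the potentials $\phi_{t_i,\lambda_i}$, but nothing in your outline produces such a bound as $t_i\to 0$ and $\lambda_i\to\lambda_0$. The paper obtains it in two stages. First, one shows that if twisted solitons exist on all $M_t$ at some parameter $\bar\lambda$, then the infimum of the twisted Ding (hence Mabuchi) functional on $M_t$ stays bounded below as $t\to 0$; this is proved by solving a homogeneous complex Monge--Amp\`ere equation on $\mathcal{M}_r$ with the soliton potentials as boundary data and verifying that $t\mapsto F_{V_t,\bar\lambda}(\Phi_t)$ is \emph{subharmonic} on $\Delta_r$, using Berndtsson's positivity of direct images for the $g$-part and a direct computation for the $E_V$-part. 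Second, the twisted Mabuchi functional is linear in $\lambda$, so interpolating between this lower bound at $\bar\lambda$ and the elementary lower bound at a small parameter gives a uniform coercivity estimate $M_{V_i,\lambda}\ge\delta_\lambda(I_{V_i}-J_{V_i})-C$ for all $\lambda$ strictly below $\bar\lambda$; evaluating at the minimizer yields a uniform bound on $I_{V_i}(\phi_{t_i,\lambda})$, and hence on $\|\phi_{t_i,\lambda}\|_{L^\infty}$ via a Green-function argument. Without this subharmonicity/interpolation step you have no uniform energy bound, and the Gromov--Hausdorff compactness and partial $C^0$ machinery cannot be invoked.

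\medskip
\textbf{Openness.} Your linearization argument on the regular locus of the singular fibre $M_0$ does not by itself give openness of $S$: the implicit function theorem would require a functional-analytic setup on $M_0$ that handles the singularities, and you have not provided one. The paper avoids this entirely. It introduces the function $\lambda_t=\sup\{\lambda:\text{twisted solitons exist on }M_t\text{ up to }\lambda\}$ and proves it is lower semi-continuous on $\Delta$ (this is where K-stability is actually used, via a connectedness argument for the Gromov--Hausdorff boundary). Given $\lambda\in S$ one then has $\lambda<\lambda_0$, hence $\lambda_t>\tilde\lambda>\lambda$ for all small $t$; twisted solitons therefore exist on the \emph{smooth} fibres $M_t$ for every $\lambda'\in[\lambda,\tilde\lambda]$, and the $L^\infty$ bound above together with the Gromov--Hausdorff convergence theorem transports them to $M_0$. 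Openness on $M_0$ is thus inherited from the smooth fibres, not proved intrinsically.
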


We now briefly describe the structure of this article and sketch the main arguments needed to prove our main Theorem \ref{T1.1}. The strategy of the proof is based on Aubin's continuity method.

The first result, which is the subject of section 2 and 3, shows that there exists a unique twisted K\"{a}hler-Ricci soliton on a $\mathbb{Q}$-Fano variety when the parameter $\lambda$ close to $1-m^{-1}$. We mainly apply the pluripotential theory developed by Berman-Boucksom-Eyssidieux-Guedj-Zeriahi, see \cite{EGZ} \cite{BEGZ10} \cite{BBGZ13} \cite{BBEGZ} \cite{BN14} and \cite{VGBook}, to show that the properness of Mabuchi functional implies the existence and uniqueness of twisted K\"{a}hler-Ricci soliton. When $\lambda$ close to $1-m^{-1}$, the Mabuchi functional is proper due to the $\alpha$-invariant.

In section 4, we obtain the uniform $L^\infty$-estimate for the K\"{a}hler potentials. First, we show that if there exist twisted K\"{a}hler-Ricci solitons on $M_t$ for $t\in \Delta^*=\Delta\backslash\{0\}$ when $\lambda=\lambda_1$, then the Mabuchi functional has a uniformly lower bound for $\lambda<\lambda_1$. Note that when $\lambda<1-m^{-1}$, the Mabuchi functional is uniformly proper. Thus, the boundedness of the functional $I$ is obtained due to the fact that the Mabuchi functional is linear in $\lambda$. This is the reason why we take the parameter $\lambda\in(1-m^{-1},1]$ and $m>1$.

In section 5, we study the behavior of twisted K\"{a}hler-Ricci solitons in the Gromov-Hausdorff topology under the smoothing family. The arguments of \cite{RZ} and \cite{WZ} confirm the regular part of the Gromov-Hausdorff limit. This limit is homemorphic to the central fiber according to the arguments of \cite{DS1}.

The last result is that the following function
$$
\lambda_t:=\sup\{\lambda\in(1-m^{-1},1]|\exists \textrm{twisted K\"{a}hler-Ricci soliton on $M_t$ for all $\kappa\leq \lambda$}\}
$$
is lower semi-continuous, which we present in section 6. It implies the openness and closedness in the Aubin's continuity method.

Next we give some remarks on the main Theorem \ref{T1.1}. First, the main technical point where the smoothability is used here is given by the application of smooth Riemannian convergence theory with Bakry-\'{E}mery Ricci curvature bounded below which is developed by Wang-Zhu \cite{WZ} and Datar-Sz\'{e}kelyhidi \cite{DS}. Second, we expect that Theorem \ref{T1.1} holds for general, not necessarily smoothable, $\mathbb{Q}$-Fano varieties. But it is difficult. From now on, Li-Tian-Wang \cite{LTW}
show that the existence of weak K\"{a}hler-Einstein metric is equivalent to the algebraic notion K-stability on a $\mathbb{Q}$-Fano variety with admissible singularities. The admissible singularities imply that the metrics they deal with always have at worst conic singularities. Thus, this problem is still open for general case. It is notable that recently Li \cite{LC19} claims that the uniform K-stability is equivalent to the existence of weak K\"{a}hler-Einstein metric on a $\mathbb{Q}$-Fano variety without any assumptions by applying the argument of \cite{BBJ}.

There are also fundamental results about the moduli spaces of smooth K\"{a}hler-Einstein manifolds, see \cite{RZ} \cite{S3} \cite{LXW} \cite{SP1} \cite{SP2} \cite{OSS} and \cite{O}. This is another motivation for this article.

\section{Preliminaries}
In this section we will establish some elementary estimates which will be used in the later. Let $M_0$ be a $\mathbb{Q}$-Fano variety and $V_0$ be a reductive holomorphic vector field defined on the regular part of $M_0$. If $\pi:M\rightarrow M_0$ is a log resolution, by normality, the vector field $V_0$ admits a unique extension $V$ to $M$ (c.f. section 2.3 \cite{BN14}). Denote $T$ and $T_0$ by the compacts groups induced by Im$V$ and Im$V_0$. There exists an integer $m>0$ such that $M_0$ can be embedded into $\mathbb{CP}^N$ by using $T_0$-invariant sections of $K_{M_0}^{-m}$. $\alpha_0$ denotes the scaling of the Fubini-Study form $\frac{1}{m}\omega_{FS}$. Set $\omega_0=\pi^*\alpha_0$, then $\omega_0$ is $T$-invariant. We introduce
$$
\PSH(M,\omega_0):=\{\varphi|\omega_0+\sqrt{-1}\partial\bar{\partial}\varphi\geq 0\}
$$
and
$$
\PSH(M,\omega_0)^T:=\{\varphi|\omega_0+\sqrt{-1}\partial\bar{\partial}\varphi\geq 0 \textrm{\ and $\varphi$ is $T$-invariant}\}.
$$
\begin{lemma}\label{L2.1}
If $\varphi\in \PSH(M,\omega_0)^T$, then $V(\varphi)$ is well-defined and $|V(\varphi)|\leq C$ a.e. $[\chi^n]$, where $C$ is a constant independent of $\varphi$ and $\chi$ is a $T$-invariant K\"{a}hler metric on $M$.
\end{lemma}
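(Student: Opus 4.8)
The heart of the matter is that $V$ is a holomorphic vector field generating a $\mathbb{C}^*$-action (or at least a torus action after complexification), and $T$-invariance of $\varphi$ means the potential does not vary along the compact directions. The idea is to decompose $V = \mathrm{Re}\,V - \sqrt{-1}\,\mathrm{Im}\,V$ (as a real vector field picture) so that $\mathrm{Im}\,V$ generates the compact group $T$ along whose orbits $\varphi$ is constant, and hence $V(\varphi) = (\mathrm{Re}\,V)(\varphi) = \tfrac{1}{2}J(\mathrm{Im}\,V)(\varphi)$ makes sense as a real object. The first step is therefore to reduce to a local computation: since $V$ generates a holomorphic action, we may choose local holomorphic coordinates in which one coordinate, say $z_1 = r e^{\sqrt{-1}\theta}$, is adapted to the $\mathbb{C}^*$-factor so that $\mathrm{Im}\,V = \partial_\theta$ and $\mathrm{Re}\,V = r\,\partial_r$. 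Then $T$-invariance gives $\partial_\theta \varphi = 0$, so $\varphi = \varphi(r, z')$, and $V(\varphi) = r\,\partial_r \varphi$.

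The key analytic input is that $\varphi$ is $\omega_0$-plurisubharmonic, hence quasi-psh, hence (in particular) locally a difference of a psh function and a smooth one; so $\varphi$ is locally bounded above and its Lelong numbers are controlled, and $r\,\partial_r\varphi$ is meaningful $[\chi^n]$-almost everywhere because a psh function is differentiable a.e. But pointwise a.e. differentiability is not enough for a uniform bound; the essential trick is to exploit the $S^1$-invariance together with convexity. Writing $u(s, z') := \varphi(e^{s}, z')$ with $s = \log r$, the function $s \mapsto u(s,z')$ is convex (this is the standard fact that an $S^1$-invariant psh function of $z_1$ is a convex function of $\log|z_1|$), and $V(\varphi) = \partial_s u$. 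A convex function on an interval has one-sided derivatives everywhere, and on any compact subinterval its derivative is bounded by the oscillation of $u$ divided by the distance to the endpoints. So the plan is: cover $M$ by finitely many such adapted charts (using compactness of $M$ and that $V$ comes from an algebraic action), and in each chart bound $|\partial_s u|$ by the oscillation of $\varphi$, which in turn is controlled uniformly because $\varphi \in \PSH(M,\omega_0)$ with $\omega_0$ fixed — the compactness of $\PSH(M,\omega_0)^T$ modulo constants (equivalently, the uniform bound on $\sup_M \varphi - \int_M \varphi\, \chi^n$, or a Skoda-type integrability) gives a uniform oscillation bound on a slightly shrunken chart.

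The main obstacle I anticipate is making the local model rigorous at the singularities of $M$ and at the fixed-point locus of the $T$-action. On the log resolution $M$, the lift $V$ of $V_0$ may vanish or degenerate along the exceptional divisor, and the "adapted coordinate" picture only holds on the open dense locus where the $T$-action is free; near the fixed locus one must instead use that $V$ vanishes there to linear order, so $V(\varphi)$ is a bounded combination of the $z_i\partial_{z_i}\varphi$ with coefficients vanishing at the fixed locus, and the convexity-in-$\log|z_i|$ estimate still applies coordinate by coordinate. One also needs the bound to be independent of $\varphi$: this is where one must cite the uniform estimates in $\PSH(M,\omega_0)^T$ — either the fact that this set, normalized by $\sup_M \varphi = 0$, is compact in $L^1$, or a direct uniform Skoda estimate — to conclude that the local oscillation of every such $\varphi$ on a fixed slightly-shrunken chart is bounded by a constant depending only on $\omega_0$ and the chart, hence the final constant $C$ depends only on $M$, $\omega_0$, $V$, and $\chi$, but not on $\varphi$.
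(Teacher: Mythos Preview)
Your local convexity picture is the right heuristic, but the step where you extract a \emph{uniform} bound has a genuine gap. You write that ``the compactness of $\PSH(M,\omega_0)^T$ modulo constants \ldots\ gives a uniform oscillation bound on a slightly shrunken chart.'' This is false: $L^1$-compactness of $\{\varphi : \sup_M\varphi=0\}$ and Skoda-type integrability control $\int_M e^{-\alpha\varphi}$ and $\int_M|\varphi|$, but give no pointwise lower bound on $\varphi$; quasi-psh functions can take the value $-\infty$ on a pluripolar set and be arbitrarily negative nearby. Concretely, for $\varphi=\epsilon\log|\sigma|^2$ with $\sigma$ a holomorphic section, the oscillation on any chart meeting $\{\sigma=0\}$ is infinite while all the $L^1$/Skoda quantities stay bounded. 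Your secant-slope estimate then only yields
\[
|\partial_s u(s_0,z')|\ \le\ \frac{\sup u - u(s_0,z')}{\mathrm{dist}(s_0,\partial I)},
\]
which blows up wherever $\varphi$ is very negative and is certainly not a constant independent of $\varphi$.

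The uniform bound is in fact a \emph{global} statement, not a local one: for smooth $\varphi$ with $\omega_\varphi>0$, the function $\theta_M+V(\varphi)$ is a Hamiltonian for $V$ with respect to $\omega_\varphi$, and at its extrema one has $\iota_V\omega_\varphi=0$, hence $V=0$ and $V(\varphi)=0$ there; so the range of $\theta_M+V(\varphi)$ is trapped between values of $\theta_M$ at zeros of $V$, giving $|V(\varphi)|\le\beta_M-\alpha_M$. This is exactly what the paper imports from Zhu (Lemma~5.1 and Corollary~5.3 of \cite{Zhu00}). Equivalently, your convexity argument becomes correct if you run it along the \emph{entire} flow line of $J(\mathrm{Im}\,V)$ from one fixed point of $T$ to another, using that $\partial_s\psi\to\theta_M(p_\pm)$ at the ends; but then it is no longer a local chart argument and it needs the flow to close up at zeros of $V$, which is again a global/smooth input.

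The paper's proof accordingly does not attempt a direct estimate on the singular $\varphi$. It takes a decreasing approximation $\varphi_j\searrow\varphi$ by smooth $T$-invariant $(\omega_0+\epsilon_j\chi)$-psh functions (B{\l}ocki--Ko{\l}odziej), applies Zhu's bound to each $\varphi_j$ to get $|V(\varphi_j)|\le C$ with $C$ independent of $j$, and then uses that $\nabla\varphi_j\to\nabla\varphi$ in $L^q$ (hence a.e.\ along a subsequence) to conclude $|V(\varphi)|\le C$ a.e.\ $[\chi^n]$. Your proposal would be repaired by inserting this approximation step and replacing the local oscillation bound by the global moment-map/maximum-principle argument for the smooth approximants.
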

\begin{proof}
There is a strictly decreasing sequence $\varphi_j$ of smooth functions with limit $\varphi$ such that $\omega_0+\epsilon_j\chi+\sqrt{-1}\partial\bar{\partial}\varphi_j>0$ and $\epsilon_j$ decreases to $0$ due to \cite{BK07}. By averaging we can assume that $\varphi_j$ are $T$-invariant. Lemma 5.1 \cite{Zhu00} and Corollary 5.3 \cite{Zhu00} imply that $|V(\varphi_j)|\leq C$, where $C$ is a constant independent of $\varphi_j$. By Theorem 1.48 \cite{VGBook} and locally argument, $\nabla\varphi_j$ converge to $\nabla\varphi$ in $L^q$ for all $1\leq q<2$, where $\nabla$ denotes the gradient of functions. Furthermore, there exists a subsequence $j_k$ such that $\nabla\varphi_{j_k}$ converges to $\nabla\varphi$ a.e. $[\chi^n]$. So $V(\varphi)$ is well-defined and $|V(\varphi)|\leq C$.
\end{proof}
In \cite{BEGZ10}, the finite energy class
$$
\PSH_{full}(M,\omega_0):=\Big\{\varphi\in \PSH(M,\omega_0)\Big|\int_M (\omega_0+\sqrt{-1}\partial\bar{\partial}\varphi)^n=\int_M \omega_0^n=a\Big\}
$$
has been investigated. Similarly, we need the following definition.
\begin{definition}
The $T$-invariant finite energy class is
$$
\PSH_{\textrm{full}}(M,\omega_0)^T:=\Big\{\varphi\in \PSH(M,\omega_0)^T\Big|\int_M (\omega_0+\sqrt{-1}\partial\bar{\partial}\varphi)^n=\int_M \omega_0^n=a\Big\}
$$
\end{definition}
\begin{lemma}\label{L2.2}
If $\varphi\in \PSH(M,\omega_0)^T$ and $\psi\in \PSH_{\textrm{full}}(M,\omega_0)^T$, then $|V(\varphi)|\leq C$ a.e. $[(\omega_0+\sqrt{-1}\partial\bar{\partial}\psi)^n]$.
\end{lemma}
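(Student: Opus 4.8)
The plan is to upgrade the conclusion of Lemma~\ref{L2.1} from ``a.e.\ with respect to the smooth volume form $\chi^n$'' to ``a.e.\ with respect to $\mu:=(\omega_0+\sqrt{-1}\partial\bar{\partial}\psi)^n$''. The only new difficulty is that, although $\mu$ is a non-pluripolar measure --- this is precisely where the hypothesis $\psi\in\PSH_{\textrm{full}}(M,\omega_0)^T$ enters, since a potential of full Monge--Amp\`ere mass produces a measure charging no pluripolar set (cf.\ \cite{BEGZ10}) --- it may nevertheless charge sets of Lebesgue measure zero, so the $L^q_{loc}$-convergence of gradients used in the proof of Lemma~\ref{L2.1} is not by itself enough. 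What I would exploit instead is the one-dimensional convexity hidden in the $T$-invariance of $\varphi$, which gives a bound on $V(\varphi)$ holding pointwise off the pluripolar set $\{\varphi=-\infty\}$.

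Let $\sigma_t$, $t\in\mathbb{R}$, be the real one-parameter group of automorphisms generated by $V$ (that is, by $V+\overline{V}$), and keep the smooth $T$-invariant functions $\varphi_j\downarrow\varphi$ with $|V(\varphi_j)|\le C$ supplied by the proof of Lemma~\ref{L2.1}. Since $\mathrm{Im}\,V$ lies in the Lie algebra of $T$ and $\varphi,\varphi_j$ are $T$-invariant, the functions $G_x(t):=\varphi(\sigma_t(x))$ and $G^j_x(t):=\varphi_j(\sigma_t(x))$ depend on $t$ alone, and $V(\varphi)(x)=\tfrac12 G_x'(0)$, $V(\varphi_j)(x)=\tfrac12 (G^j_x)'(0)$ wherever these derivatives exist. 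The $T$-invariant $\omega_0$-plurisubharmonicity of $\varphi$ (and, up to the vanishing term $\epsilon_j\chi$, of $\varphi_j$), read along the orbits of $\sigma$, amounts to the statement that $G_x+u_0(\,\cdot\,,x)$ and $G^j_x+u_0(\,\cdot\,,x)$ are convex, with $u_0(\,\cdot\,,x)$ a fixed smooth function built from $\omega_0$ (independent of $\varphi$), and --- this is the substance of Zhu's Lemma 5.1 and Corollary 5.3 cited in the proof of Lemma~\ref{L2.1} --- the derivative of such a convex function takes values in a fixed compact interval $[a,b]$ independent of the potential (the projection along $V$ of the moment polytope of $([\omega_0],T)$). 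Since convexity and one-sided derivatives pass to decreasing limits ($G^j_x\downarrow G_x$), one obtains $G_x'(0^{\pm})+\partial_t u_0(0,x)\in[a,b]$, hence $|G_x'(0^{\pm})|\le\max(|a|,|b|)+\sup|\partial_t u_0|=:C$, at every $x$ with $\varphi(x)>-\infty$. Therefore the Borel function $x\mapsto\tfrac14\big(G_x'(0^-)+G_x'(0^+)\big)$ --- which coincides $\chi^n$-a.e.\ with the representative of $V(\varphi)$ produced in Lemma~\ref{L2.1}, because $(G^j_x)'(0)\to G_x'(0)$ at every $x$ where $G_x$ is differentiable at $0$, a $\chi^n$-conull set --- is bounded by $C$ on all of $\{\varphi>-\infty\}$.

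It then only remains to observe that $\{\varphi=-\infty\}$ is pluripolar, hence $\mu$-negligible by the first paragraph, so $|V(\varphi)|\le C$ a.e.\ $[\mu]$, as claimed. (A second, more technical route, closer to the proof of Lemma~\ref{L2.1} and avoiding the convexity picture: reduce to bounded $\psi$ and $\varphi$ by truncation and the identity $\mu=\lim_k\uparrow\mathbf{1}_{\{\psi>-k\}}(\omega_0+\sqrt{-1}\partial\bar{\partial}\max(\psi,-k))^n$; show $\nabla\varphi_j\to\nabla\varphi$ in the Monge--Amp\`ere capacity $\mathrm{Cap}_{\omega_0}$, via the Bedford--Taylor integration-by-parts bound $\int_M\sqrt{-1}\partial w_j\wedge\bar{\partial}w_j\wedge T\le\int_M w_j(\omega_0+\sqrt{-1}\partial\bar{\partial}\varphi+\chi)\wedge T\to0$ with $w_j:=\varphi_j-\varphi$ and $T$ running over the test currents defining $\mathrm{Cap}_{\omega_0}$, followed by Chebyshev; and invoke that a bounded-potential Monge--Amp\`ere measure gives no mass to sets of zero capacity.)

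The main obstacle is the middle paragraph: setting up rigorously, for a \emph{general} --- not necessarily toric --- $T$-invariant $\omega_0$-psh function, the normalization $u_0(\,\cdot\,,x)$ and the convexity of $G_x+u_0(\,\cdot\,,x)$ along the $V$-orbits, together with the fact that the relevant derivative stays in the fixed interval $[a,b]$; and then checking that the pointwise representative built from the one-sided derivatives $G_x'(0^{\pm})$ really is the $V(\varphi)$ of Lemma~\ref{L2.1} (equivalently, that the set where $G_x$ fails to be differentiable at $0$ is $\chi^n$-null). The remaining steps are routine pluripotential bookkeeping.
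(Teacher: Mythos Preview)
Your approach is correct but takes a genuinely different route from the paper. The paper argues entirely on the $\psi$ side and never revisits $V(\varphi)$: it covers the $\chi^n$-null exceptional set $S=\{|V(\varphi)|>C\}$ from Lemma~\ref{L2.1} by a $G_\delta$ set $B$ with $\int_B\chi^n=0$, reduces to bounded $\psi$ via the canonical cut-offs $\psi_j=\max(\psi,-j)$ (using that the non-pluripolar product is the increasing limit of $\mathbf{1}_{\{\psi>-j\}}(\omega_0+\sqrt{-1}\partial\bar\partial\psi_j)^n$, hence monotone convergence on every Borel set), and then for bounded $\psi$ takes a smooth decreasing approximation $\psi^k$ so that each $(\omega_0+\epsilon_k\chi+\sqrt{-1}\partial\bar\partial\psi^k)^n$ is a smooth volume form vanishing on $B$; a Bedford--Taylor convergence result (Theorem~3.18 in \cite{VGBook}) then passes this to the limit. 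Your route instead works on the $\varphi$ side: you build a canonical Borel representative of $V(\varphi)$ --- via one-sided derivatives of the convex function $G_x+u_0(\cdot,x)$ along $V$-orbits --- that is bounded by $C$ at \emph{every} point of $\{\varphi>-\infty\}$, so only the pluripolar set $\{\varphi=-\infty\}$ remains, which the full-mass hypothesis on $\psi$ kills immediately. The paper's argument is shorter and relies only on off-the-shelf pluripotential convergence theorems; yours requires the convexity/moment-map setup you flag as the main obstacle, but in return yields a stronger and more portable conclusion (a pointwise-bounded representative off a pluripolar set, independent of any particular $\psi$). Your parenthetical capacity route is a third valid option, intermediate in spirit between the two.
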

\begin{proof}
By the Lemma \ref{L2.1}, there exists a constant $C$ independent of $\varphi$ such that $|V(\varphi)|\leq C$ a.e. $[\chi^n]$. We introduce the set $S:=\{x\in M||V(\varphi)(x)|>C\}$, then there is a Borel set $B\supset S$ which is $G_\delta$ such that $\int_B \chi^n=0$. We take the canonical approximation $\psi_j:=\max(\psi,-j)$. Proposition 10.15 \cite{VGBook} claims that $\lim_{j\rightarrow\infty}\int_B(\omega_0+\sqrt{-1}\partial\bar{\partial}\psi_j)^n=\int_B(\omega_0+\sqrt{-1}\partial\bar{\partial}\psi)^n$.
For each $j$, if $\int_B(\omega_0+\sqrt{-1}\partial\bar{\partial}\psi_j)^n=0$, then this lemma is true. Next we assume that $\psi\in \PSH_{\textrm{full}}(M,\omega_0)^T\cap L^\infty(M)$. Choosing a decreasing sequence $\psi^k$ of smooth functions with limit $\psi$ such that $\omega_0+\epsilon_k\chi+\sqrt{-1}\partial\bar{\partial}\psi^k>0$ and $\epsilon_k$ decreases to $0$. Theorem 3.18 \cite{VGBook} implies that $\lim_{k\rightarrow\infty}\int_B(\omega_0+\epsilon_k\chi+\sqrt{-1}\partial\bar{\partial}\psi^k)^n=\int_B(\omega_0+\sqrt{-1}\partial\bar{\partial}\psi)^n$.
Note that $\int_B(\omega_0+\epsilon_k\chi+\sqrt{-1}\partial\bar{\partial}\psi^k)^n=0$ for each $k$ since $\int_B\chi^n=0$. Therefore the proof is completed.
\end{proof}
$\PSH_{{full}}(M,\omega_0)^T$ is convex according to the same argument of Proposition 10.7 \cite{VGBook}. Next we introduce the following functionals.
\begin{definition}
For $\phi\in \PSH(M,\omega_0)^T\cap L^\infty(M)$,
$$
E_V(\phi):=\int_0^1\int_M\phi e^{\theta_M+s\cdot V(\phi)}\omega^n_{s\phi}\wedge ds
$$
and
$$
E(\phi):=\frac{1}{n+1}\sum_{j=0}^n\int_M\phi\omega_\phi^j\wedge\omega_0^{n-j}
$$
where $\omega_{s\phi}:=\omega_0+\sqrt{-1}\partial\bar{\partial}(s\phi)$ and $\theta_M$ is defined by
$L_V\omega_0=\sqrt{-1}\partial\bar{\partial}\theta_M$.
\end{definition}
\begin{definition}
For $\varphi\in \PSH(M,\omega_0)^T$,
$$
E(\varphi):=\inf\{E(\phi)|\phi\geq \varphi,\phi\in \PSH(M,\omega_0)\cap L^\infty(M)\},
$$
$$
E_V(\varphi):=\inf\{E_V(\phi)|\phi\geq \varphi,\phi\in \PSH(M,\omega_0)^T\cap L^\infty(M)\},
$$
$$
\mathcal{E}^1(M,\omega_0):=\{\varphi\in \PSH_{\textrm{full}}(M,\omega_0)|E(\varphi)>-\infty \},
$$
$$
\mathcal{E}^1_V(M,\omega_0):=\{\varphi\in \PSH_{\textrm{full}}(M,\omega_0)^T|E_V(\varphi)>-\infty\}.
$$
\end{definition}
The following lemma is standard due to Proposition 2.15 \cite{BN14}.
\begin{lemma}\label{L2.3}
The map $\varphi\mapsto E_V(\varphi)$ is upper semi-continuous for the $L^1$-topology on $\PSH(M,\omega_0)^T$. It is continuous along decreasing sequences in $\PSH(M,\omega_0)^T$.
\end{lemma}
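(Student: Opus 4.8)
The plan is to reduce the statement to the behaviour of $E_V$ on the subclass $\PSH(M,\omega_0)^T\cap L^\infty(M)$, where $E_V$ is given by the explicit integral formula, and then to bootstrap to the full class through the defining infimum, exactly along the lines of Proposition 2.15 \cite{BN14}.

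\textbf{Step 1: monotonicity and continuity on bounded potentials.} First I would check that $\phi\mapsto E_V(\phi)$ is non-decreasing on $\PSH(M,\omega_0)^T\cap L^\infty(M)$. Differentiating along an affine path $\phi_t$ and integrating by parts (the classical Aubin--Mabuchi computation, here weighted by $e^{\theta_M}$ and twisted by $V$) gives
$$
\frac{d}{dt}E_V(\phi_t)=\int_M \dot\phi_t\, e^{\theta_M+V(\phi_t)}\,\omega_{\phi_t}^n\ \ge\ 0\quad\text{whenever }\dot\phi_t\ge 0,
$$
the weight being positive and, by Lemma \ref{L2.1}, uniformly bounded, so no integrability issue arises. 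Next, for a decreasing sequence $\phi_k\downarrow\phi$ with all $\phi_k,\phi$ bounded, I would pass to the limit in $E_V(\phi_k)=\int_0^1\int_M\phi_k\,e^{\theta_M+sV(\phi_k)}\,\omega_{s\phi_k}^n\wedge ds$. Here $\phi\le\phi_k\le\phi_1$ so the $\phi_k$ are uniformly bounded; $|V(\phi_k)|\le C$ uniformly by Lemma \ref{L2.1}, and the gradient-convergence argument used to prove that lemma gives $V(\phi_k)\to V(\phi)$ a.e.; and $\omega_{s\phi_k}^n\to\omega_{s\phi}^n$ weakly by the Bedford--Taylor continuity of the Monge--Amp\`ere operator along decreasing sequences of bounded potentials (Theorem 3.18 \cite{VGBook}). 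Combining these, together with dominated convergence in the $s$-variable, yields $E_V(\phi_k)\to E_V(\phi)$.

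\textbf{Step 2: extension to $\PSH(M,\omega_0)^T$.} Monotonicity from Step 1 shows the infimum defining $E_V(\varphi)$ is unchanged if one restricts to $\phi\ge\varphi$, so $E_V$ is automatically non-decreasing on all of $\PSH(M,\omega_0)^T$. Given a decreasing sequence $\varphi_j\downarrow\varphi$ in $\PSH(M,\omega_0)^T$, monotonicity gives $E_V(\varphi_j)\downarrow L\ge E_V(\varphi)$. Conversely, fix any $\phi\ge\varphi$ in $\PSH(M,\omega_0)^T\cap L^\infty(M)$; then $\max(\phi,\varphi_j)$ is a decreasing sequence of bounded $T$-invariant potentials (bounded above since $\omega_0$-psh functions are bounded above on compact $M$, bounded below by $-\|\phi\|_\infty$) with limit $\phi$, so Step 1 gives $E_V(\max(\phi,\varphi_j))\to E_V(\phi)$, while $E_V(\varphi_j)\le E_V(\max(\phi,\varphi_j))$ by monotonicity. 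Hence $L\le E_V(\phi)$, and taking the infimum over such $\phi$ gives $L\le E_V(\varphi)$, i.e.\ $E_V(\varphi_j)\to E_V(\varphi)$.

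\textbf{Step 3: upper semicontinuity in $L^1$.} Let $\varphi_j\to\varphi$ in $L^1(M)$ with $\varphi_j,\varphi\in\PSH(M,\omega_0)^T$; it suffices to bound $\limsup_j E_V(\varphi_j)$, so after passing to a subsequence realizing the $\limsup$ and, by a further subsequence, achieving $\varphi_j\to\varphi$ a.e., set $\psi_j:=\big(\sup_{k\ge j}\varphi_k\big)^{*}$. Each $\psi_j$ lies in $\PSH(M,\omega_0)^T$ ($T$-invariance survives both the sup and the regularization), the sequence is decreasing and $\ge\varphi_j$ pointwise, and the Hartogs-type lemma for quasi-psh functions gives $\psi_j\downarrow\varphi$. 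Monotonicity yields $E_V(\varphi_j)\le E_V(\psi_j)$, and Step 2 gives $E_V(\psi_j)\to E_V(\varphi)$; therefore $\limsup_j E_V(\varphi_j)\le E_V(\varphi)$.

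\textbf{Main obstacle.} The delicate point is the limit in Step 1: passing simultaneously to the limit in the product $\phi_k\,e^{\theta_M+sV(\phi_k)}\,\omega_{s\phi_k}^n$, where a decreasingly convergent but merely bounded density multiplies a weakly convergent sequence of Monge--Amp\`ere measures, with the twisting weight itself depending on $\phi_k$ through the vector field $V$. Closing this requires carefully combining the uniform bounds of Lemma \ref{L2.1}, the a.e.\ convergence $V(\phi_k)\to V(\phi)$ from the gradient estimates, and the weak convergence of $\omega_{s\phi_k}^n$ — for instance by sandwiching $\phi_k$ and $V(\phi_k)$ between continuous functions, or by invoking the relevant convergence theorems of \cite{VGBook} directly — after which uniformity in $s\in[0,1]$ lets Fubini and dominated convergence finish the argument. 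This is precisely the content imported from Proposition 2.15 \cite{BN14}.
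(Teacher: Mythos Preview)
Your proposal is correct and follows precisely the approach the paper has in mind: the paper gives no argument of its own and simply defers to Proposition 2.15 \cite{BN14}, and your three-step outline (monotonicity and continuity on bounded potentials, extension via the defining infimum and the $\max(\phi,\varphi_j)$ trick, then upper semicontinuity via the regularized sup and Hartogs) is exactly the content of that proposition. Your identification of the main technical point --- controlling the limit of the twisted density against the weakly convergent Monge--Amp\`ere measures --- is also accurate and is where Lemma \ref{L2.1} enters.
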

\begin{lemma}\label{L2.4}
If $\varphi\in \mathcal{E}^1_V(M,\omega_0)$, then $\varphi\in \mathcal{E}^1(M,\omega_0)$.
\end{lemma}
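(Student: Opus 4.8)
The plan is to compare $E_V$ and $E$ on the canonical bounded approximants of $\varphi$. The two functionals differ only by the weight $e^{\theta_M+s\,V(\cdot)}$ inside the integral, and by Lemma~\ref{L2.1} and Lemma~\ref{L2.2} this weight is pinched between two positive constants against the Monge--Amp\`ere measures that occur; once that is in place the implication $E_V(\varphi)>-\infty \Rightarrow E(\varphi)>-\infty$ is immediate. To begin, I would record that $\PSH_{\textrm{full}}(M,\omega_0)^T\subset \PSH_{\textrm{full}}(M,\omega_0)$, so only $E(\varphi)>-\infty$ has to be proved, and I would normalize $\sup_M\varphi=0$. This is harmless: adding a constant $c$ to $\varphi$ leaves $V(\varphi)$ unchanged (since $V$ kills constants) and leaves every $\omega_{s\varphi}$ unchanged, and it changes $E_V$ and $E$ only by a bounded amount, because the extra terms are of the form $c\int_0^1\!\int_M e^{\theta_M+sV(\cdot)}\omega_{s\cdot}^n\,ds$ and $c\int_M\omega_0^n$, which are finite by Lemma~\ref{L2.2}.

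Next I would set $\varphi_j:=\max(\varphi,-j)$, which lies in $\PSH(M,\omega_0)^T\cap L^\infty(M)$, satisfies $\varphi_j\le 0$, and decreases to $\varphi$. For fixed $j$ and $s\in[0,1]$ one has $\omega_{s\varphi_j}=(1-s)\omega_0+s\omega_{\varphi_j}\ge 0$ with total mass $\int_M\omega_0^n$, so $s\varphi_j\in\PSH_{\textrm{full}}(M,\omega_0)^T\cap L^\infty(M)$; hence Lemma~\ref{L2.2} gives $|V(\varphi_j)|\le C$ a.e.\ $[\omega_{s\varphi_j}^n]$ with $C$ the universal constant of Lemma~\ref{L2.1}, independent of $j$ and $s$. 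Since $\theta_M$ is a fixed bounded function, there is a constant $c>0$, independent of $j$ and $s$, with $e^{\theta_M+sV(\varphi_j)}\ge c$ a.e.\ $[\omega_{s\varphi_j}^n]$. As $\varphi_j\le 0$, multiplying reverses the inequality: $\varphi_j\,e^{\theta_M+sV(\varphi_j)}\le c\,\varphi_j$ a.e.\ $[\omega_{s\varphi_j}^n]$. Integrating over $M$ and over $s\in[0,1]$, and invoking the standard identity $E(\phi)=\int_0^1\!\int_M\phi\,\omega_{s\phi}^n\,ds$ valid for $\phi\in\PSH(M,\omega_0)\cap L^\infty(M)$ (see \cite{BEGZ10}, \cite{VGBook}), I obtain $E_V(\varphi_j)\le c\,E(\varphi_j)$, that is $E(\varphi_j)\ge c^{-1}E_V(\varphi_j)$.

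Finally I would pass to the limit. By Lemma~\ref{L2.3}, $E_V(\varphi_j)\to E_V(\varphi)$, which is finite by hypothesis, so the sequence $E_V(\varphi_j)$ is bounded below, say $E_V(\varphi_j)\ge -A$ for all $j$. Then $E(\varphi_j)\ge -A/c$ for all $j$. Now for any $\phi\in\PSH(M,\omega_0)\cap L^\infty(M)$ with $\phi\ge\varphi$, pick $j$ with $\phi\ge -j$; then $\phi=\max(\phi,-j)\ge\max(\varphi,-j)=\varphi_j$, so $E(\phi)\ge E(\varphi_j)\ge -A/c$ by monotonicity of $E$ on bounded $\omega_0$-plurisubharmonic functions. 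Taking the infimum over all such $\phi$ yields $E(\varphi)\ge -A/c>-\infty$, hence $\varphi\in\mathcal{E}^1(M,\omega_0)$.

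I do not expect a genuine obstacle: the argument is essentially bookkeeping around the two-sided bound $c\le e^{\theta_M+sV(\cdot)}\le C'$, which is exactly what Lemmas~\ref{L2.1}--\ref{L2.2} supply. The two places that need a little care are (i) checking that $s\varphi_j\in\PSH_{\textrm{full}}(M,\omega_0)^T$ for every $s\in[0,1]$, so that Lemma~\ref{L2.2} applies with a constant uniform in $j$ and $s$; and (ii) the sign bookkeeping, which is the reason for normalizing $\sup_M\varphi=0$ at the outset so that all the approximants $\varphi_j$ are $\le 0$.
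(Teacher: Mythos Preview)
Your proposal is correct and follows essentially the same approach as the paper: compare $E_V$ and $E$ on the canonical approximants $\varphi_j=\max(\varphi,-j)$ using the two-sided bound on $e^{\theta_M+sV(\varphi_j)}$ supplied by Lemmas~\ref{L2.1}--\ref{L2.2}, and then pass to the limit. The only cosmetic differences are that the paper handles the sign by writing $\varphi_j=(\varphi_j-C_1)+C_1$ rather than normalizing $\sup_M\varphi=0$, and that the paper passes to the limit in $E$ by citing the continuity of $E$ along the canonical approximants (Proposition~10.19 of \cite{VGBook}) instead of your monotonicity argument; both routes give the same inequality $e^C E(\varphi)+{\rm const}\le E_V(\varphi)$.
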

\begin{proof}
Define $\varphi_j:=\max(\varphi,-j)$, then Lemma \ref{L2.3} and Proposition 10.19 \cite{VGBook} imply that $\lim_{j\rightarrow\infty}E_V(\varphi_j)=E_V(\varphi)$ and $\lim_{j\rightarrow\infty}E(\varphi_j)=E(\varphi)$. Assume that $\varphi\leq C_1$ where $C_1$ is a positive constant, we have the following calculations
\begin{align*}
E_V(\varphi_j)&=\int_0^1\int_M(\varphi_j-C_1) e^{\theta_M+s\cdot V(\varphi_j)}\omega^n_{s\varphi_j}\wedge ds+\int_0^1\int_M C_1 e^{\theta_M+s\cdot V(\varphi_j)}\omega^n_{s\varphi_j}\wedge ds\\
&\geq e^C\int_0^1\int_M(\varphi_j-C_1)\omega^n_{s\varphi_j}\wedge ds+C_1e^{-C}\int_0^1\int_M\omega^n_{s\varphi_j}\wedge ds\\
&= e^CE(\varphi_j)+C_1(e^{-C}-e^C)a
\end{align*}
where the second inequality holds due to Lemma \ref{L2.2}. By the same argument, we have
\begin{align*}
E_V(\varphi_j)&\leq e^{-C}\int_0^1\int_M(\varphi_j-C_1)\omega^n_{s\varphi_j}\wedge ds+C_1e^C\int_0^1\int_M\omega^n_{s\varphi_j}\wedge ds\\
&=e^{-C}E(\varphi_j)+C_1(e^C-e^{-C})a.
\end{align*}
Taking limit on both sides
$$
e^CE(\varphi)+C_1(e^{-C}-e^C)a\leq E_V(\varphi)\leq e^{-C}E(\varphi)+C_1(e^C-e^{-C})a.
$$
Therefore, $E_V(\varphi)>-\infty$ gives $E(\varphi)>-\infty$.
\end{proof}
\begin{proposition}\label{P2.1}
Let $\varphi\in \PSH(M,\omega_0)^T$ and $\varphi_j:=\max(\varphi,-j)$. Assume that $V(\varphi_j)$ pointwise converges to $V(\varphi)$, then
$$
e^{\theta_M+V(\varphi_j)}\omega^n_{\varphi_j}\rightarrow e^{\theta_M+V(\varphi)}\omega^n_{\varphi}
$$
weakly as $j\rightarrow\infty$.

If $\varphi\in \mathcal{E}^1_V(M,\omega_0)$ and $V(\varphi_j)$ pointwise converges to $V(\varphi)$, then
$$
\varphi_je^{\theta_M+V(\varphi_j)}\omega^n_{\varphi_j}\rightarrow \varphi e^{\theta_M+V(\varphi)}\omega^n_{\varphi}
$$
weakly as $j\rightarrow\infty$.
\end{proposition}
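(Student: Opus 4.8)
The plan is to compare the two measures on the plurifine-open set $\{\varphi>-j\}$, where by construction $\varphi_j=\varphi$ (and, by hypothesis, $V(\varphi_j)=V(\varphi)$), and to control the contributions coming from $\{\varphi\le -j\}$ separately. Set $f_j:=e^{\theta_M+V(\varphi_j)}$ and $f:=e^{\theta_M+V(\varphi)}$. By Lemma~\ref{L2.1}, the compactness of $M$, and the smoothness of $\theta_M$, there are constants $0<c_1\le c_2$ with $c_1\le f_j\le c_2$ for all $j$ and $c_1\le f\le c_2$; thus $f_j$ is a harmless uniformly bounded weight. Two pluripotential-theoretic facts will be used. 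First, \emph{plurifine locality} of the non-pluripolar Monge--Amp\`ere operator (\cite{BEGZ10}): since $\varphi=\varphi_j$ and $V(\varphi)=V(\varphi_j)$ on the plurifine-open set $\{\varphi>-j\}$, we get $\mathbf 1_{\{\varphi>-j\}}\,\omega^n_{\varphi_j}=\mathbf 1_{\{\varphi>-j\}}\,\omega^n_{\varphi}$, and hence also $\mathbf 1_{\{\varphi>-j\}}f_j\,\omega^n_{\varphi_j}=\mathbf 1_{\{\varphi>-j\}}f\,\omega^n_{\varphi}$ (and the same with an extra factor $\varphi_j=\varphi$). Second, by the very construction of $\omega^n_{\varphi}$, $\mathbf 1_{\{\varphi>-j\}}\,\omega^n_{\varphi_j}\uparrow\omega^n_{\varphi}$ as $j\to\infty$, $\omega^n_{\varphi}$ charges no pluripolar set (in particular $\omega^n_\varphi(\{\varphi=-\infty\})=0$), and $\int_M\omega^n_{\varphi_j}=a$ because $\varphi_j$ is bounded.

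\emph{First assertion.} Fix a continuous test function $u$ and split
$$
\int_M u\,f_j\,\omega^n_{\varphi_j}=\int_{\{\varphi>-j\}} u\,f_j\,\omega^n_{\varphi_j}+\int_{\{\varphi\le -j\}} u\,f_j\,\omega^n_{\varphi_j}.
$$
By locality the first integral equals $\int_{\{\varphi>-j\}} u\,f\,\omega^n_{\varphi}$, which tends to $\int_M u\,f\,\omega^n_{\varphi}$ by dominated convergence, the integrand being dominated by $c_2\|u\|_\infty\in L^1(\omega^n_\varphi)$ and converging $\omega^n_\varphi$-a.e. The second integral is bounded in absolute value by
$$
c_2\|u\|_\infty\int_{\{\varphi\le -j\}}\omega^n_{\varphi_j}=c_2\|u\|_\infty\Bigl(a-\int_{\{\varphi>-j\}}\omega^n_{\varphi_j}\Bigr)=c_2\|u\|_\infty\Bigl(a-\int_{\{\varphi>-j\}}\omega^n_{\varphi}\Bigr),
$$
which tends to $0$ since $\varphi$ has full Monge--Amp\`ere mass (this holds in every situation in which the proposition is applied; in particular it is guaranteed by the hypothesis of the second assertion). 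This gives $f_j\,\omega^n_{\varphi_j}\to f\,\omega^n_{\varphi}$ weakly.

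\emph{Second assertion.} Now $\varphi\in\mathcal E^1_V(M,\omega_0)$, so by Lemma~\ref{L2.4} $\varphi\in\mathcal E^1(M,\omega_0)$ and $\int_M|\varphi|\,\omega^n_{\varphi}<\infty$. Splitting as before and using $\varphi_j=\varphi$ on $\{\varphi>-j\}$,
$$
\int_M u\,\varphi_j\,f_j\,\omega^n_{\varphi_j}=\int_{\{\varphi>-j\}} u\,\varphi\,f\,\omega^n_{\varphi}+\int_{\{\varphi\le -j\}} u\,\varphi_j\,f_j\,\omega^n_{\varphi_j}.
$$
In the first term the integrand is dominated by $c_2\|u\|_\infty|\varphi|\in L^1(\omega^n_\varphi)$ and converges $\omega^n_\varphi$-a.e. to $u\,\varphi\,f$, so it tends to $\int_M u\,\varphi\,f\,\omega^n_\varphi$. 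For the tail, since $\varphi_j=-j$ on $\{\varphi\le -j\}=\{\varphi_j=-j\}$, it is bounded in absolute value by
$$
c_2\|u\|_\infty\int_{\{\varphi_j=-j\}}(-\varphi_j)\,\omega^n_{\varphi_j}=c_2\|u\|_\infty\Bigl(\int_M(-\varphi_j)\,\omega^n_{\varphi_j}-\int_{\{\varphi>-j\}}(-\varphi)\,\omega^n_{\varphi}\Bigr),
$$
where the last equality again uses locality on $\{\varphi>-j\}$. The subtracted integral increases to $\int_M(-\varphi)\,\omega^n_\varphi$ by monotone convergence, while $\int_M(-\varphi_j)\,\omega^n_{\varphi_j}\to\int_M(-\varphi)\,\omega^n_\varphi$ by the monotone convergence theorem for the complex Monge--Amp\`ere operator along the canonical approximation $\varphi_j=\max(\varphi,-j)$ (cf. Proposition~10.19 of \cite{VGBook}, used exactly as in the proof of Lemma~\ref{L2.4}); here $\varphi\in\mathcal E^1$ is what makes the common limit finite. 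Hence the bracket, and with it the tail, tends to $0$, which yields $\varphi_j\,f_j\,\omega^n_{\varphi_j}\to\varphi\,f\,\omega^n_{\varphi}$ weakly.

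\emph{Main obstacle.} The delicate step is the tail term $\int_{\{\varphi\le -j\}}(-\varphi_j)\,\omega^n_{\varphi_j}$ in the second assertion: it is an $``\infty\cdot 0\text{''}$ expression, the factor $-\varphi_j=j$ blowing up while the mass $\omega^n_{\varphi_j}(\{\varphi\le -j\})\to 0$. A mere uniform bound $\sup_j\int_M(-\varphi_j)\,\omega^n_{\varphi_j}<\infty$ does not by itself force it to vanish; what is genuinely needed is the sharp fact that these Monge--Amp\`ere energies \emph{converge} along the canonical approximation, which is precisely the finite-energy input $\varphi\in\mathcal E^1_V$ (through $\varphi\in\mathcal E^1$) combined with the monotone convergence theorem. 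Everything else reduces to plurifine locality of the non-pluripolar product and routine dominated convergence, with the uniform bound $|V(\varphi_j)|\le C$ from Lemma~\ref{L2.1} ensuring that the exponential weight causes no trouble.
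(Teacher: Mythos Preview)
Your argument is correct and follows the same splitting strategy as the paper: use plurifine locality of the non-pluripolar product on $\{\varphi>-j\}$ and control the tails over $\{\varphi\le -j\}$ via the finite-energy hypothesis. For the first assertion the paper simply cites Theorem~2.7 of \cite{BN14} rather than arguing directly, and your observation that full Monge--Amp\`ere mass is implicitly needed is accurate; in the paper the result is only ever applied under that hypothesis.

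The one genuine difference is in how the tail $\int_{\{\varphi\le -j\}}|\varphi_j|\,\omega^n_{\varphi_j}$ is killed in the second assertion. The paper writes out the de~la~Vall\'ee--Poussin trick explicitly: since $\varphi\in\mathcal E^1$, there is a convex weight $\gamma$ with $\int_M\gamma(\varphi)\,\omega^n_\varphi>-\infty$ and $(-k)/\gamma(-k)\to 0$ (Exercise~10.5 of \cite{VGBook}), and then
\[
\int_{\{\varphi\le -j\}}|\varphi_j|\,\omega^n_{\varphi_j}\;\le\;\frac{j}{-\gamma(-j)}\int_M|\gamma(\varphi_j)|\,\omega^n_{\varphi_j}\longrightarrow 0.
\]
You instead rewrite the tail as $\int_M(-\varphi_j)\,\omega^n_{\varphi_j}-\int_{\{\varphi>-j\}}(-\varphi)\,\omega^n_\varphi$ and let both terms tend to the common finite limit $\int_M(-\varphi)\,\omega^n_\varphi$. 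This is slicker, but note that Proposition~10.19 of \cite{VGBook} as cited in Lemma~\ref{L2.4} literally gives $E(\varphi_j)\to E(\varphi)$, not the single top-degree term $\int_M\varphi_j\,\omega^n_{\varphi_j}\to\int_M\varphi\,\omega^n_\varphi$ that you need; the latter is of course also standard for $\varphi\in\mathcal E^1$ (see e.g.\ \cite{BEGZ10}), but its usual proof is precisely the $\gamma$-argument above. So the two routes are equivalent in substance---yours packages the uniform-integrability step into a reference, the paper unpacks it.
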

\begin{proof}
The first argument is obtained according to Theorem 2.7 \cite{BN14}. Next we prove the second argument which is similar as Theorem 2.17 \cite{BEGZ10}. Let $h$ be a continuous function on $M$, then it is enough to establish that
$$
\lim_{j\rightarrow\infty}\int_M h\varphi_je^{\theta_M+V(\varphi_j)}\omega^n_{\varphi_j}=\int_M h\varphi e^{\theta_M+V(\varphi)}\omega^n_{\varphi}.
$$
We have
\begin{align*}
&\Big|\int_M h(\varphi_je^{\theta_M+V(\varphi_j)}\omega^n_{\varphi_j}-\varphi e^{\theta_M+V(\varphi)}\omega^n_{\varphi})\Big|\leq \int_{\{\varphi>-j\}}|h||\varphi|\big|e^{\theta_M+V(\varphi_j)}-e^{\theta_M+V(\varphi)}\big|\omega^n_{\varphi}\\
& +\int_{\{\varphi\leq -j\}}|h||\varphi_j|e^{\theta_M+V(\varphi_j)}\omega^n_{\varphi_j}+\int_{\{\varphi\leq -j\}}|h||\varphi|e^{\theta_M+V(\varphi)}\omega^n_{\varphi}.
\end{align*}
The condition that $V(\varphi_j)$ pointwise converges to $V(\varphi)$ implies
$$
\lim_{j\rightarrow\infty}\int_{\{\varphi>-j\}}|h||\varphi|\big|e^{\theta_M+V(\varphi_j)}-e^{\theta_M+V(\varphi)}\big|\omega^n_{\varphi}
\leq
\lim_{j\rightarrow\infty}\int_M|h||\varphi|\big|e^{\theta_M+V(\varphi_j)}-e^{\theta_M+V(\varphi)}\big|\omega^n_{\varphi}=0.
$$

Lemma \ref{L2.4} and Exercise 10.5 \cite{VGBook} show that $\varphi\in \mathcal{E}^1(M,\omega_0)$ and there exists a convex weight $\gamma$ such that $\lim_{k\rightarrow\infty}\frac{-k}{\gamma(-k)}=0$ and $\int_M \gamma(\varphi)\omega^n_\varphi>-\infty$, where a weight denotes a smooth increasing function $\gamma:\mathbb{R}\rightarrow\mathbb{R}$ such that $\gamma(-\infty)=-\infty$. According to Lemma \ref{L2.2}, we have
\begin{align*}
\int_{\{\varphi\leq -j\}}|h||\varphi_j|e^{\theta_M+V(\varphi_j)}\omega^n_{\varphi_j}& \leq \sup_M|h|e^C\int_{\{\varphi\leq -j\}}|\varphi_j|\omega^n_{\varphi_j} \\
& =\sup_M|h|e^C\int_{\{\varphi\leq -j\}}|\gamma(\varphi_j)|\cdot\frac{|\varphi_j|}{|\gamma(\varphi_j)|}\omega^n_{\varphi_j}\\
&\leq \sup_M|h|e^C\cdot\frac{-j}{\gamma(-j)}\cdot\int_M|\gamma(\varphi_j)|\omega^n_{\varphi_j},
\end{align*}
which yields
$$
\lim_{j\rightarrow\infty}\int_{\{\varphi\leq -j\}}|h||\varphi_j|e^{\theta_M+V(\varphi_j)}\omega^n_{\varphi_j}=0.
$$
Also
\begin{align*}
\int_{\{\varphi\leq -j\}}|h||\varphi|e^{\theta_M+V(\varphi)}\omega^n_{\varphi}&\leq \sup_M|h|e^C\cdot
\lim_{k\rightarrow\infty}\int_{\{-k<\varphi\leq j\}}|\varphi_k|\omega^n_{\varphi_k}\\
&\leq \sup_M|h|e^C\cdot\frac{-j}{\gamma(-j)}\cdot\limsup_{k\rightarrow\infty}\int_M|\gamma(\varphi_k)|\omega_{\varphi_k}^n,
\end{align*}
which gives
$$
\lim_{j\rightarrow\infty}\int_{\{\varphi\leq -j\}}|h||\varphi|e^{\theta_M+V(\varphi)}\omega^n_{\varphi}=0.
$$
This proposition is proved.
\end{proof}
\begin{corollary}
If $\varphi\in \mathcal{E}^1_V(M,\omega_0)$, then
$$
E_V(\varphi)=\int_0^1\int_M\varphi e^{\theta_M+s\cdot V(\varphi)}\omega^n_{s\varphi}\wedge ds.
$$
\end{corollary}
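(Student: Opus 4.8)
The plan is to combine the continuity of $E_V$ along decreasing sequences (Lemma~\ref{L2.3}) with the weak convergence in Proposition~\ref{P2.1}, via the canonical approximation $\varphi_j:=\max(\varphi,-j)$. Each $\varphi_j$ lies in $\PSH_{\textrm{full}}(M,\omega_0)^T\cap L^\infty(M)$ and $\varphi_j\downarrow\varphi$. For bounded potentials the two clauses defining $E_V$ coincide: $\phi\mapsto E_V(\phi)$ is non-decreasing (its differential at $\phi$ is $\psi\mapsto\int_M\psi\,e^{\theta_M+V(\phi)}\omega_\phi^n$, part of the standard package behind Lemma~\ref{L2.3}), so the infimum defining $E_V(\varphi_j)$ is attained at $\phi=\varphi_j$ and
$$
E_V(\varphi_j)=\int_0^1\int_M\varphi_j\,e^{\theta_M+s\cdot V(\varphi_j)}\,\omega^n_{s\varphi_j}\wedge ds.
$$
By Lemma~\ref{L2.3}, $E_V(\varphi)=\lim_{j\to\infty}E_V(\varphi_j)$, so it remains to identify this limit with the right-hand side of the statement.

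First I would record that $s\varphi\in\mathcal{E}^1_V(M,\omega_0)$ for every $s\in[0,1]$: indeed $s\varphi\in\PSH_{\textrm{full}}(M,\omega_0)^T$ by convexity, $E(s\varphi)\ge sE(\varphi)+(1-s)E(0)>-\infty$ by concavity of $E$ and Lemma~\ref{L2.4}, and then the comparison $e^CE(s\varphi)+C_1(e^{-C}-e^C)a\le E_V(s\varphi)$ from the proof of Lemma~\ref{L2.4} forces $E_V(s\varphi)>-\infty$. I would also verify the pointwise convergence $V(\varphi_j)\to V(\varphi)$ required by Proposition~\ref{P2.1}: since $\varphi_j=\varphi$, hence $\nabla\varphi_j=\nabla\varphi$, off $\{\varphi\le -j\}$, and $\{\varphi=-\infty\}$ is pluripolar, this follows exactly as in Lemma~\ref{L2.1}. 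Now fix $s\in(0,1]$ and note that $s\varphi_j=\max(s\varphi,-sj)$ is the canonical truncation of $s\varphi$ at the level $sj\uparrow\infty$; applying the second assertion of Proposition~\ref{P2.1} to $s\varphi$ (its proof is insensitive to the truncation levels being non-integers) and pairing with the constant function $h\equiv 1$ gives
$$
\lim_{j\to\infty}\int_M\varphi_j\,e^{\theta_M+s\cdot V(\varphi_j)}\,\omega^n_{s\varphi_j}=\int_M\varphi\,e^{\theta_M+s\cdot V(\varphi)}\,\omega^n_{s\varphi},
$$
the case $s=0$ being immediate.

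Finally I would pass this convergence through $\int_0^1(\,\cdot\,)\,ds$ by dominated convergence. The exponentials are uniformly bounded because $|\theta_M|\le C$ and, by Lemma~\ref{L2.1}, $|V(\varphi_j)|\le C$ with $C$ independent of $j$; and the masses $\int_M|\varphi_j|\,\omega^n_{s\varphi_j}$ and $\int_M|\varphi|\,\omega^n_{s\varphi}$ are bounded uniformly in $j$ and $s$ by the standard $L^1$-estimates for finite-energy potentials, producing an $s$-integrable dominating function. Chaining the three displays yields $E_V(\varphi)=\int_0^1\int_M\varphi\,e^{\theta_M+s\cdot V(\varphi)}\omega^n_{s\varphi}\wedge ds$, as claimed.

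The step I expect to be the main obstacle is this last one: securing a bound $\int_M|\varphi_j|\,\omega^n_{s\varphi_j}\le C'$ uniform in both $j$ and $s$ along the truncations is exactly what legitimizes dominated convergence in the $s$-variable, and this uniform mass control—coming from $\varphi\in\mathcal{E}^1_V(M,\omega_0)$ through the comparison with $E$ in Lemma~\ref{L2.4} and the usual energy estimates—is the crux. The verification that Proposition~\ref{P2.1} applies to $s\varphi$ with non-integer truncation levels and the convergence $V(\varphi_j)\to V(\varphi)$ are lesser points that still need care.
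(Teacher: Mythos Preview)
Your proposal is correct and follows essentially the same route as the paper: canonical truncations $\varphi_j=\max(\varphi,-j)$, Lemma~\ref{L2.3} for the left side, Proposition~\ref{P2.1} for the pointwise-in-$s$ convergence, and dominated convergence in $s$. The step you flag as the crux is exactly the one the paper spells out: it bounds $|f_j(s)|\le e^C\int_M|\varphi_j|\,((1-s)\omega_0+s\omega_{\varphi_j})^n$, expands the binomial, and uses the standard energy inequality $\int_M|\varphi_j|\,\omega_0^m\wedge\omega_{\varphi_j}^{n-m}\le C'\int_M|\varphi_j|\,\omega_{\varphi_j}^n$ together with $\varphi\in\mathcal{E}^1(M,\omega_0)$ (via Lemma~\ref{L2.4}) to obtain a constant dominating function.
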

\begin{proof}
Set $\varphi_j:=\max(\varphi,-j)$, $f_j(s):=\int_M\varphi_je^{\theta_M+s\cdot V(\varphi_j)}\omega^n_{s\varphi_j}$ and $f(s):=\int_M\varphi e^{\theta_M+s\cdot V(\varphi)}\omega^n_{s\varphi}$, then Proposition \ref{P2.1} shows that $\lim_{j\rightarrow \infty}f_j(s)=f(s)$ for each $s\in [0,1]$. Note that
\begin{align*}
|f_j(s)|& \leq e^C\int_M|\varphi_j|((1-s)\omega_0+s\omega_{\varphi_j})^n
=e^C\int_M|\varphi_j|\cdot\sum_{m=0}^n\textrm{C}^n_ms^{n-m}(1-s)^m\omega_0^m\wedge \omega^{n-m}_{\varphi_j}\\
& \leq C'\int_M|\varphi_j|\omega_{\varphi_j}^n\leq C'',
\end{align*}
where $C'$ and $C''$ are positive constants and the third inequality bases on $\varphi\in \mathcal{E}^1(M,\omega_0)$. By the Lebesgue dominated convergence theorem, $\lim_{j\rightarrow \infty}\int_0^1f_j(s)ds=\int_0^1f(s)ds$.
\end{proof}
\begin{lemma}\label{L2.5}
Let $\mathcal{E}^1_{V,C}(M,\omega_0):=\{\varphi\in \mathcal{E}^1_{V}(M,\omega_0)|E_V(\varphi)\geq -C \ \ \textrm{and} \ \ \sup_M\varphi\leq 0\}$, then it is a compact subset for the $L^1$-topology.
\end{lemma}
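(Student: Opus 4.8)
The plan is to establish the two halves of compactness separately: that $\mathcal{E}^1_{V,C}(M,\omega_0)$ is relatively compact in the $L^1$-topology, and that it is closed. The starting point is to reduce the condition $E_V\ge -C$ to a bound on the ordinary energy $E$. Every $\varphi\in\mathcal{E}^1_{V,C}(M,\omega_0)$ lies in $\PSH_{\textrm{full}}(M,\omega_0)^T$, so Lemma \ref{L2.2} applies to it; repeating the computation in the proof of Lemma \ref{L2.4} with the trivial upper bound $C_1=0$ (legitimate since $\sup_M\varphi\le 0$) and letting $A$ denote the constant of Lemma \ref{L2.2} enlarged to also dominate $\sup_M|\theta_M|$, one obtains
$$
e^{A}E(\varphi)\ \le\ E_V(\varphi)\ \le\ e^{-A}E(\varphi).
$$
Since $E(\varphi)\le 0$ for $\varphi\le 0$ (apply $E$ to the truncations $\max(\varphi,-j)$ and let $j\to\infty$, exactly as in the proof of Lemma \ref{L2.4}), the left inequality yields $E(\varphi)\ge e^{A}E_V(\varphi)\ge -Ce^{A}=:-C'$, so
$$
\mathcal{E}^1_{V,C}(M,\omega_0)\ \subseteq\ \mathcal{K}\ :=\ \{\varphi\in\PSH(M,\omega_0)\ :\ \sup_M\varphi\le 0,\ E(\varphi)\ge -C'\}.
$$

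Next I would invoke the classical compactness of the finite energy class: by \cite{BEGZ10} (see also \cite{BBGZ13},\cite{BBEGZ} and \cite{VGBook}), $\mathcal{K}$ is a compact subset of $\mathcal{E}^1(M,\omega_0)$ for the $L^1$-topology, and in particular every element of $\mathcal{K}$ has full non-pluripolar Monge--Amp\`ere mass. This immediately gives that $\mathcal{E}^1_{V,C}(M,\omega_0)$ is relatively compact in $L^1$ and that the $L^1$-limit of any sequence drawn from it lies in $\PSH_{\textrm{full}}(M,\omega_0)$.

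For closedness, take $\varphi_k\in\mathcal{E}^1_{V,C}(M,\omega_0)$ with $\varphi_k\to\psi$ in $L^1$. Passing to a subsequence we may assume $\varphi_k\to\psi$ a.e., so $\psi$ is $T$-invariant and $\psi\le 0$ a.e.; since $\psi$ is $\omega_0$-psh, the sub-mean-value inequality in local charts upgrades this to $\sup_M\psi\le 0$. By the previous step $\psi\in\PSH_{\textrm{full}}(M,\omega_0)$, hence $\psi\in\PSH_{\textrm{full}}(M,\omega_0)^T$. Finally Lemma \ref{L2.3} gives upper semicontinuity of $E_V$ for the $L^1$-topology on $\PSH(M,\omega_0)^T$, so
$$
E_V(\psi)\ \ge\ \limsup_{k\to\infty}E_V(\varphi_k)\ \ge\ -C,
$$
the last inequality because $E_V(\varphi_k)\ge -C$ for every $k$. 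Thus $E_V(\psi)>-\infty$, so $\psi\in\mathcal{E}^1_V(M,\omega_0)$ and hence $\psi\in\mathcal{E}^1_{V,C}(M,\omega_0)$; the set is closed, and being relatively compact it is compact.

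The step I expect to demand the most care is this last one, namely checking that the two defining conditions pass to $L^1$-limits: the energy bound $E_V\ge -C$ survives thanks to the upper semicontinuity of Lemma \ref{L2.3}, while full mass is not visible to $E_V$ directly and must be recovered from the classical compactness of $\mathcal{E}^1(M,\omega_0)$ via the comparison between $E_V$ and $E$ coming from Lemma \ref{L2.4}. Passing $T$-invariance and the normalization $\sup_M\,\cdot\,\le 0$ to the limit, and the relative compactness itself, are routine.
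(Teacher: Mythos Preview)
Your proof is correct, but it takes a slightly heavier route than the paper. Both arguments finish the same way---closedness via the upper semicontinuity of $E_V$ (Lemma \ref{L2.3})---but differ in how relative compactness is obtained. You first convert the bound $E_V\ge -C$ into $E\ge -C'$ via the comparison of Lemma \ref{L2.4}, and then invoke the compactness of sublevel sets of the ordinary energy in $\mathcal{E}^1(M,\omega_0)$ from \cite{BEGZ10}. The paper instead observes directly that for $\varphi\le 0$ one has $E_V(\varphi)\le e^{-C}a\,\sup_M\varphi$, so $E_V(\varphi)\ge -C$ already forces a uniform \emph{lower} bound on $\sup_M\varphi$; then $\{-C'\le\sup_M\varphi\le 0\}$ is compact in $\PSH(M,\omega_0)$ by Hartogs' lemma alone, with no need for the finite-energy compactness theorem. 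Your route is less economical but has the merit of making the full-mass verification for the limit explicit (a point the paper leaves implicit). One small slip: when you write ``the left inequality yields $E(\varphi)\ge e^{A}E_V(\varphi)$'', you are in fact using the \emph{right} inequality $E_V(\varphi)\le e^{-A}E(\varphi)$ of your display; the conclusion $E(\varphi)\ge -Ce^{A}$ is correct regardless.
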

\begin{proof}
For $\varphi\in \mathcal{E}^1_{V,C}(M,\omega_0)$, we see
$$
-C\leq E_V(\varphi)=\int_0^1\int_M\varphi e^{\theta_M+s\cdot V(\varphi)}\omega^n_{s\varphi}\wedge ds\leq e^{-C}\cdot a\cdot (\sup_M\varphi).
$$
So there exists a constant $C>0$ independent of $\varphi$ such that $-C\leq \sup_M \varphi\leq 0$, which implies
$$
\mathcal{E}^1_{V,C}(M,\omega_0)\subset \{\varphi\in \PSH(M,\omega_0)^T|-C\leq \sup_M \varphi\leq 0\}.
$$
The latter set is a compact subset of $\PSH(M,\omega_0)^T$ by Hartog's Lemma, see Theorem 1.46 \cite{VGBook}. Since $\varphi\mapsto E_V(\varphi)$ is upper semi-continuous by Lemma \ref{L2.3}, the set $\mathcal{E}^1_{V,C}(M,\omega_0)$ is closed, hence compact for $L^1$-topology.
\end{proof}
To deal with K\"{a}hler-Ricci soliton, the following functionals are introduced (c.f.\cite{TZ00}). For $\phi\in \mathcal{E}^1_V(M,\omega_0)$, we define
$$
I_V(\phi)=\int_M\phi(e^{\theta_M}\omega^n_0-e^{\theta_M+V(\phi)}\omega_{\phi}^n)
$$
and
$$
J_V(\phi)=\int_0^1\int_M\phi(e^{\theta_M}\omega_0^n-e^{\theta_M+s\cdot V(\phi)}\omega_{s\phi}^n)\wedge ds.
$$
\begin{proposition}\label{P2.2}
Define $\alpha_M=\inf_M \theta_M$ and $\beta_M=\sup_M\theta_M$ which are independent of the choice of $\omega_0$, the we have
$$
I_V(\phi)\leq (n+1+\beta_M-\alpha_M)(I_V(\phi)-J_V(\phi))\leq (n+\beta_M-\alpha_M)I_V(\phi).
$$
\end{proposition}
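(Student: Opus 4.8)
The plan is to reduce, by the regularization and truncation scheme of this section, to the case where $\phi$ is smooth with $\omega_\phi>0$, and then to run Aubin's computation along the linear path $s\mapsto s\phi$, $s\in[0,1]$, extracting the estimate from a one–variable inequality for Beta-type integrals. For the reduction: starting from $\phi\in\mathcal E^1_V(M,\omega_0)$, I would truncate $\phi_j:=\max(\phi,-j)$ and smooth each $\phi_j$ by a decreasing sequence with $\omega_0+\epsilon\chi+\sqrt{-1}\partial\bar\partial>0$ as in Lemma~\ref{L2.1}; Proposition~\ref{P2.1} and the corollary following it make $I_V$ and $J_V$ continuous along these operations, and since $\alpha_M,\beta_M$ depend only on the class the constant $\beta_M-\alpha_M$ is unchanged in the limit (one applies the smooth case with $\omega_0$ replaced by $\omega_0+\epsilon\chi$ and lets $\epsilon\to0$).

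So assume $\phi$ smooth, $\omega_\phi>0$. Set $\theta_{V,s}:=\theta_M+sV(\phi)$; since $V$ is holomorphic, $L_V\omega_{s\phi}=\sqrt{-1}\partial\bar\partial\theta_{V,s}$, equivalently $\iota_V\omega_{s\phi}=\sqrt{-1}\bar\partial\theta_{V,s}$. Put $f(s):=\int_M\phi\,e^{\theta_{V,s}}\omega_{s\phi}^n$. Differentiating in $s$ and integrating by parts, one finds that the term produced by $\tfrac{d}{ds}e^{sV(\phi)}$ cancels exactly the term produced when $\sqrt{-1}\partial\bar\partial$ is moved off $\phi$ (this uses $\iota_V\omega_{s\phi}=\sqrt{-1}\bar\partial\theta_{V,s}$ and that $V(\phi)$ is real, which holds by the $T$-invariance of $\phi$), leaving
$$f'(s)=-\,n\int_M e^{\theta_{V,s}}\,\sqrt{-1}\partial\phi\wedge\bar\partial\phi\wedge\omega_{s\phi}^{n-1}\;=:\;-\,p(s)\le 0,$$
with $p\ge 0$ because $\omega_{s\phi}=(1-s)\omega_0+s\omega_\phi\ge 0$ on $[0,1]$. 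As $f(0)=\int_M\phi\,e^{\theta_M}\omega_0^n$ and $f(1)=\int_M\phi\,e^{\theta_M+V(\phi)}\omega_\phi^n$, the definitions unwind to
$$I_V(\phi)=\int_0^1 p(s)\,ds,\qquad I_V(\phi)-J_V(\phi)=\int_0^1 s\,p(s)\,ds,\qquad J_V(\phi)=\int_0^1(1-s)\,p(s)\,ds,$$
all three nonnegative.

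Now expand $\omega_{s\phi}^{n-1}=\sum_{k=0}^{n-1}\binom{n-1}{k}(1-s)^{n-1-k}s^k\,\omega_0^{n-1-k}\wedge\omega_\phi^k$ and note that each $\sqrt{-1}\partial\phi\wedge\bar\partial\phi\wedge\omega_0^{n-1-k}\wedge\omega_\phi^k$ is a nonnegative $(n,n)$-form. Writing $I_V$ and $I_V-J_V$ as $M$-integrals of inner $s$-integrals and cancelling $e^{\theta_M(x)}$ pointwise, the normalization $\alpha_M\le\theta_{V,s}(x)\le\beta_M$ forces $|V(\phi)(x)|\le\mu:=\beta_M-\alpha_M$, and the ratio of the two inner integrals at $x$ is a convex combination of the barycenters
$$\bar s_k(v):=\frac{\int_0^1 s^{k+1}(1-s)^{n-1-k}e^{sv}\,ds}{\int_0^1 s^{k}(1-s)^{n-1-k}e^{sv}\,ds},\qquad 0\le k\le n-1,\ |v|\le\mu,$$
so it suffices to prove $\bar s_k(v)\in[\tfrac1{n+1+\mu},\tfrac{n+\mu}{n+1+\mu}]$. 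For $v\ge0$, $\bar s_k(v)\ge\bar s_k(0)=\tfrac{k+1}{n+1}\ge\tfrac1{n+1}$ (an exponential tilt only raises the barycenter). For $v=-r<0$, integrating $\tfrac{d}{ds}\!\big[s^{k+1}(1-s)^{n-k}e^{-rs}\big]$ over $[0,1]$ gives $\int_0^1 s^{k}(1-s)^{n-1-k}e^{-rs}\big[(k+1)-(n+1)s\big]ds=r\int_0^1 s^{k+1}(1-s)^{n-k}e^{-rs}ds$, and together with $(n+1+r)s-1=k-\big[(k+1)-(n+1)s\big]+rs$ this yields
$$\int_0^1 s^{k}(1-s)^{n-1-k}e^{-rs}\big[(n+1+r)s-1\big]ds=k\int_0^1 s^{k}(1-s)^{n-1-k}e^{-rs}ds+r\int_0^1 s^{k+2}(1-s)^{n-1-k}e^{-rs}ds\ \ge\ 0,$$
hence $\bar s_k(-r)\ge\tfrac1{n+1+r}\ge\tfrac1{n+1+\mu}$; the upper bound follows by the substitution $s\mapsto1-s$, $k\mapsto n-1-k$, $v\mapsto-v$. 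Integrating the resulting pointwise inequalities over $M$ gives $I_V\le(n+1+\mu)(I_V-J_V)$ and $(I_V-J_V)\le\tfrac{n+\mu}{n+1+\mu}I_V$, which is the assertion.

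The main obstacle is the integration by parts that defines $p(s)$: one must check that the two holomorphic–vector-field contributions cancel precisely, so that $f'(s)$ is the manifestly nonpositive quantity $-p(s)$ — this is exactly where holomorphicity of $V$, the Hamiltonian identity $\iota_V\omega_{s\phi}=\sqrt{-1}\bar\partial\theta_{V,s}$, and the $T$-invariance of $\phi$ enter. Once that identity is secured, the positivity expansion and the one-variable estimate are elementary, and the passage from smooth to general $\phi$ is routine given the approximation results of this section.
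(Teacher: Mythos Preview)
Your reduction to the smooth case by truncation and Blocki--Kolodziej approximation is exactly what the paper does, including the passage through $\omega_0+\epsilon\chi$ and the observation that $\alpha_{M,\epsilon},\beta_{M,\epsilon}\to\alpha_M,\beta_M$. Where you diverge is in the smooth case: the paper simply invokes Proposition~A.1 of Mabuchi \cite{M1} and is done, whereas you supply a self-contained proof. Your derivation of $f'(s)=-p(s)$ is the Tian--Zhu identity (their Lemma~3.3, used later in this paper as Lemma~\ref{L4.4}), and your representation $I_V=\int_0^1 p,\ I_V-J_V=\int_0^1 s\,p$ is correct. The Beta-barycenter step is the novel ingredient: the integration-by-parts identity you write down does give $(n+1+r)\bar s_k(-r)\ge 1$, and the symmetry $s\mapsto 1-s$ yields the upper bound; combined with $|V(\phi)|\le\beta_M-\alpha_M$ (which follows, as you note, from the fact that $\theta_{V,s}$ is the normalized Hamiltonian for $\omega_{s\phi}$ and hence has range $[\alpha_M,\beta_M]$ independent of $s$), this pointwise estimate integrates to the claim. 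So your argument is correct and strictly more informative than the paper's citation: it makes transparent \emph{why} the constant $\beta_M-\alpha_M$ appears (it is precisely the bound on the exponential tilt $e^{sV(\phi)}$), at the cost of a page of computation that Mabuchi's reference absorbs.
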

\begin{proof}
Taking $\phi_j:=\max(\phi,-j)$, then $\lim_{j\rightarrow \infty}I_V(\phi_j)=I_V(\phi)$ and $\lim_{j\rightarrow \infty}J_V(\phi_j)=J_V(\phi)$ according to Proposition \ref{P2.1} when $V(\phi_j)$ pointwise converges to $V(\phi)$. Without loss of generality, we can assume that $\phi\in \mathcal{E}^1_V(M,\omega_0)\cap L^\infty(M)$. By the approximation theorem \cite{BK07}, there is a strictly decreasing sequence $\phi^k$ of smooth functions with limit $\phi$ such that $\omega_0+\epsilon_k\chi+\sqrt{-1}\partial\bar{\partial}\phi^k>0$. We further assume that $\phi^k$ are $T$-invariant and $V(\phi^k)$ pointwise converges to $V(\phi)$. Define $\theta_{M,k}$ by $L_V(\omega_0+\epsilon_k\chi)=\sqrt{-1}\partial\bar{\partial}\theta_{M,k}$ and $\alpha_{M,k}:=\inf_M \theta_{M,k}$,  $\beta_{M,k}:=\sup_M\theta_{M,k}$. We denote $\omega_k$ by $\omega_0+\epsilon_k\chi$ and define
$$
I_V(\phi^k)=\int_M\phi^k(e^{\theta_{M,k}}\omega^n_k-e^{\theta_{M,k}+V(\phi^k)}\omega^n_{\phi^k})
$$
and
$$
J_V(\phi^k)=\int_0^1\int_M\phi^k(e^{\theta_{M,k}}\omega^n_k-e^{\theta_{M,k}+s\cdot V(\phi^k)}\omega^n_{s\phi^k})\wedge ds
$$
where $\omega_{s\phi^k}:=\omega_k+\sqrt{-1}\partial\bar{\partial}(s\phi^k)$. Proposition A.1 \cite{M1} implies that
\begin{equation}\label{e2.1}
I_V(\phi^k)\leq (n+1+\beta_{M,k}-\alpha_{M,k})(I_V(\phi^k)-J_V(\phi^k))\leq (n+\beta_{M,k}-\alpha_{M,k})I_V(\phi^k).
\end{equation}
By Theorem 2.7 \cite{BN14} and the fact that $\lim_{k\rightarrow\infty}\theta_{M,k}=\theta_M$, we know that $\lim_{k\rightarrow \infty}I_V(\phi^k)=I_V(\phi)$ and $\lim_{k\rightarrow \infty}J_V(\phi^k)=J_V(\phi)$. Therefore, by taking the limit on inequality (\ref{e2.1}) we deduce this proposition.
\end{proof}
\section{The variational approach for twisted K\"{a}hler-Ricci solitons}
This section is devoted to explain a variational approach developed in \cite{BBGZ13} to solve the twisted K\"{a}hler-Ricci soliton equation.

Recall that if $\pi:M\rightarrow M_0$ is a log resolution, then there exist rational numbers $a_i\geq 0$ and $0<b_j<1$ with
$$
K_M=\pi^*K_{M_0}+\sum_ia_iE_i-\sum_jb_jF_j
$$
where $E_i$ and $F_j$ are exceptional prime divisors. We embed $M_0$ into $\mathbb{CP}^N$ by using $T_0$-invariant sections of $K_{M_0}^{-m}$. $\alpha_0$ denotes $\frac{1}{m}\omega_{FS}$. Let $\nu$ be an adapted measure with $\sqrt{-1}\partial\bar{\partial}\log \nu=-\alpha_0$ on $(M_0)_{\textrm{reg}}$, where
$(M_0)_{\textrm{reg}}$ denotes the regular part of $M_0$. $\theta_{M_0}$ is a Hamiltonian function defined by $L_{V_0}\alpha_0=\sqrt{-1}\partial\bar{\partial}\theta_{M_0}$.
\begin{definition}\label{D3.1}
For $\lambda\in (1-m^{-1},1]$, a twisted K\"{a}hler-Ricci soliton for the triple $(M_0,V_0,(1-\lambda)\omega_{FS})$ is a current $\omega_\phi:=\alpha_0+\sqrt{-1}\partial\bar{\partial}\phi$ with full Monge-Amp\`{e}re mass, i.e. $\phi\in \PSH_{full}(M_0,\alpha_0)^{T_0}$ such that
$$
e^{\theta_{M_0}+V_0(\phi)}(\alpha_0+\sqrt{-1}\partial\bar{\partial}\phi)^n=\frac{e^{-r(\lambda)\phi}\nu}{\int_{M_0}e^{-r(\lambda)\phi}\nu}
$$
where we assume that $e^{\theta_{M_0}+V_0(\phi)}(\alpha_0+\sqrt{-1}\partial\bar{\partial}\phi)^n$ is a probability measure on $M_0$ and $r(\lambda)=1-(1-\lambda)m$.
\end{definition}
\begin{remark}
The existence of the twisted K\"{a}hler-Ricci soliton is also equivalent to solve the following degenerated complex Monge-Amp\`{e}re equation on $M$
\begin{equation}
e^{\theta_{M}+V(\phi)}(\omega_0+\sqrt{-1}\partial\bar{\partial}\phi)^n=\frac{e^{-r(\lambda)\phi}\mu}{\int_Me^{-r(\lambda)\phi}\mu},
\end{equation}\label{e3.1}
where $\omega_0=\pi^*\alpha_0$, $\mu=\pi^*\nu$ and $\theta_M$ is a Hamiltonian function defined by $L_V\omega_0=\sqrt{-1}\partial\bar{\partial}\theta_M$.
\end{remark}
Next, some consequences about $\alpha$-invariant defined by Tian \cite{T87} will be recalled (c.f. \cite{BBEGZ}).
\begin{definition}
The $\alpha$-invariant of a measure $\mu$ is defined as
$$
\alpha_\mu(\omega_0):=\sup\Big\{\alpha>0\Big|\sup_{\varphi\in \PSH(M,\omega_0)}\int_Me^{-\alpha\varphi}d\mu<+\infty\Big\}.
$$
\end{definition}
\begin{remark}\label{r3.2}
The $\alpha$-invariant $\alpha_\mu(\omega_0)>0$ due to Proposition 1.4 \cite{BBEGZ}.
\end{remark}
Mabuchi functional and Ding functional play important roles in the research of the existence of K\"{a}hler-Einstein metrics on Fano manifolds. Similarly, we need the following
\begin{definition}
For $\varphi\in \mathcal{E}^1_V(M,\omega_0)$, we define the twisted Ding functional to be
\begin{align*}
F_{V,\lambda}(\varphi)&=-r(\lambda)\cdot E_V(\varphi)-\log\int_Me^{-r(\lambda)\varphi}d\mu\\
&=-r(\lambda)\cdot\int_0^1\int_M\varphi e^{\theta_M+s\cdot V(\varphi)} \omega^n_{s\varphi}\wedge ds-\log\int_Me^{-r(\lambda)\varphi}d\mu
\end{align*}
\end{definition}
\begin{proposition}\label{P3.1}
$F_{V,\lambda}$ is lower semi-continuous on each $\mathcal{E}_{V,C}^1(M,\omega_0)$ defined in Lemma \ref{L2.5}.
\end{proposition}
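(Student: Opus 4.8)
The plan is to separate $F_{V,\lambda}$ into its two summands and treat each one. Write
\[
F_{V,\lambda}(\varphi)=-r(\lambda)E_V(\varphi)-\log\int_M e^{-r(\lambda)\varphi}\,d\mu,
\]
and recall $r(\lambda)=1-(1-\lambda)m>0$ since $\lambda\in(1-m^{-1},1]$. Fix a sequence $\varphi_j\to\varphi$ in $L^1$ with all $\varphi_j,\varphi\in\mathcal{E}^1_{V,C}(M,\omega_0)$; by Lemma \ref{L2.5} this set is $L^1$-compact, so there is no loss in working inside it, and by its very definition $\sup_M\varphi_j\le 0$ for every $j$. The first term is immediate: Lemma \ref{L2.3} says $E_V$ is upper semi-continuous for the $L^1$-topology, so $-r(\lambda)E_V$ is lower semi-continuous because $r(\lambda)>0$, whence $\liminf_j\bigl(-r(\lambda)E_V(\varphi_j)\bigr)\ge -r(\lambda)E_V(\varphi)$.

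The substance of the proof is the exponential term, for which I would show that $\varphi\mapsto\int_M e^{-r(\lambda)\varphi}\,d\mu$ is in fact \emph{continuous} on $\mathcal{E}^1_{V,C}(M,\omega_0)$ for the $L^1$-topology; this gives the required upper semi-continuity of $\log\int_M e^{-r(\lambda)\varphi}\,d\mu$, hence lower semi-continuity of $-\log\int_M e^{-r(\lambda)\varphi}\,d\mu$. After passing to a subsequence, $L^1$-convergence yields $\varphi_j\to\varphi$ Lebesgue-a.e., hence $\mu$-a.e. because the adapted measure $\mu=\pi^*\nu$ is absolutely continuous with respect to Lebesgue measure (and has finite total mass, as $0<b_j<1$); thus $e^{-r(\lambda)\varphi_j}\to e^{-r(\lambda)\varphi}$ $\mu$-a.e. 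Next — and this is the key step — the family $\{e^{-r(\lambda)\varphi_j}\}_j$ is uniformly $\mu$-integrable: using $\alpha_\mu(\omega_0)>0$ (Remark \ref{r3.2}) together with the normalization $\sup_M\varphi_j\le 0$, one picks $p>1$ with $p\,r(\lambda)<\alpha_\mu(\omega_0)$ (this is exactly where one needs $\lambda$ close to $1-m^{-1}$, so that $r(\lambda)$ is small relative to $\alpha_\mu(\omega_0)$), and then $\sup_j\int_M e^{-p\,r(\lambda)\varphi_j}\,d\mu<+\infty$ directly from the definition of the $\alpha$-invariant; being bounded in $L^p(\mu)$ on a finite measure space forces uniform integrability. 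The Vitali convergence theorem now gives $\int_M e^{-r(\lambda)\varphi_j}\,d\mu\to\int_M e^{-r(\lambda)\varphi}\,d\mu$ along this subsequence, and a standard subsequence-of-every-subsequence argument (the $L^1$ limit being unique) promotes this to convergence of the full sequence.

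Combining the two parts gives $\liminf_j F_{V,\lambda}(\varphi_j)\ge -r(\lambda)E_V(\varphi)-\log\int_M e^{-r(\lambda)\varphi}\,d\mu=F_{V,\lambda}(\varphi)$, the asserted lower semi-continuity. I expect the uniform-integrability step to be the main obstacle: it is precisely here that the positivity of the $\alpha$-invariant and the built-in normalization $\sup_M\varphi\le 0$ of $\mathcal{E}^1_{V,C}(M,\omega_0)$ are used, and one must keep the exponent $p\,r(\lambda)$ below $\alpha_\mu(\omega_0)$. Without it one has only Fatou's lemma, which controls $\int_M e^{-r(\lambda)\varphi}\,d\mu$ from below and hence gives semi-continuity of this term in the wrong direction.
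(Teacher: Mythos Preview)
Your decomposition and the handling of the $E_V$ term via Lemma~\ref{L2.3} match the paper exactly. The gap lies precisely where you flagged it: your uniform-integrability argument for $\{e^{-r(\lambda)\varphi_j}\}$ requires $p>1$ with $p\,r(\lambda)<\alpha_\mu(\omega_0)$, hence $r(\lambda)<\alpha_\mu(\omega_0)$. But Proposition~\ref{P3.1} is asserted (and later used, in Theorem~\ref{T3.1}) for every $\lambda\in(1-m^{-1},1]$, so $r(\lambda)$ ranges over all of $(0,1]$; there is no reason for the $\alpha$-invariant of the adapted measure $\mu$ to exceed~$1$, and in Fano-type situations it typically does not. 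Your parenthetical ``this is exactly where one needs $\lambda$ close to $1-m^{-1}$'' is not an innocent observation --- it is a genuine restriction that the proposition does not carry, so for $\lambda$ near~$1$ your proof breaks down.

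What your argument does not exploit is the extra structure of $\mathcal{E}^1_{V,C}(M,\omega_0)$: by Lemma~\ref{L2.4} it sits inside $\mathcal{E}^1(M,\omega_0)$, and finite-energy functions have zero Lelong numbers everywhere. Skoda's theorem then gives $e^{-t\varphi}\in L^1(\mu)$ for \emph{every} $t>0$ and every $\varphi$ in this class, not only for $t<\alpha_\mu(\omega_0)$; combined with the Demailly--Koll\'ar semicontinuity of complex singularity exponents and the $L^1$-compactness of $\mathcal{E}^1_{V,C}$ (Lemma~\ref{L2.5}), one gets uniform integrability of $\{e^{-r(\lambda)\varphi}:\varphi\in\mathcal{E}^1_{V,C}\}$ for any fixed $r(\lambda)>0$. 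The paper short-circuits this by citing Proposition~11.3(iii) of \cite{VGBook}, which asserts exactly the continuity of $\varphi\mapsto\log\int_M e^{-r(\lambda)\varphi}\,d\mu$ on such compact energy classes. With this input in place of the crude $\alpha$-invariant bound, your Vitali argument goes through for all $\lambda$.
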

\begin{proof}
Proposition 11.3 (iii) \cite{VGBook} implies that $\varphi\mapsto \log\int_Me^{-r(\lambda)\varphi}d\mu$ is continuous on $\mathcal{E}_{V,C}^1(M,\omega_0)$. The conclusion follows due to Lemma \ref{L2.3}.
\end{proof}
\begin{definition}
For $\varphi\in \mathcal{E}^1_V(M,\omega_0)$, we define the twisted Mabuchi functional to be
$$
M_{V,\lambda}(\varphi)=-r(\lambda)\cdot \Big(E_V(\varphi)-\int_M\varphi e^{\theta_M+V(\varphi)} \omega^n_{\varphi}\Big)+\int_M\log\frac{e^{\theta_M+V(\varphi)} \omega^n_{\varphi}}{\mu}e^{\theta_M+V(\varphi)} \omega^n_{\varphi}.
$$
\end{definition}
Set $$\mu_\varphi=\frac{e^{-r(\lambda)\varphi}\mu}{\int_Me^{-r(\lambda)\varphi}d \mu},$$ we have
\begin{lemma}\label{L3.1}
For $\varphi\in \mathcal{E}^1_V(M,\omega_0)$,
$$
F_{V,\lambda}(\varphi)=M_{V,\lambda}(\varphi)-\int_M\log\frac{e^{\theta_M+V(\varphi)} \omega^n_{\varphi}}{\mu_\varphi}e^{\theta_M+V(\varphi)} \omega^n_{\varphi}\leq M_{V,\lambda}(\varphi).
$$
\end{lemma}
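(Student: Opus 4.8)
The plan is to establish the stated identity by a direct rearrangement of the defining formulas, and then to obtain the inequality $F_{V,\lambda}(\varphi)\le M_{V,\lambda}(\varphi)$ from the nonnegativity of relative entropy. Throughout one works with the normalization under which $e^{\theta_M+V(\varphi)}\omega^n_\varphi$ is a probability measure for every $\varphi\in\mathcal{E}^1_V(M,\omega_0)$; since the total mass of this measure is independent of $\varphi$, this is a single constraint on $\omega_0$.

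First I would write down the pointwise Radon--Nikodym identity
$$\log\frac{e^{\theta_M+V(\varphi)}\omega^n_\varphi}{\mu_\varphi}=\log\frac{e^{\theta_M+V(\varphi)}\omega^n_\varphi}{\mu}+r(\lambda)\varphi+\log\int_Me^{-r(\lambda)\varphi}d\mu,$$
which is immediate from $\mu_\varphi=e^{-r(\lambda)\varphi}\mu\big/\int_Me^{-r(\lambda)\varphi}d\mu$. Integrating against the probability measure $e^{\theta_M+V(\varphi)}\omega^n_\varphi$ and using that its total mass is $1$ turns this into an expression for $\int_M\log\frac{e^{\theta_M+V(\varphi)}\omega^n_\varphi}{\mu_\varphi}\,e^{\theta_M+V(\varphi)}\omega^n_\varphi$ as the sum of the entropy term $\int_M\log\frac{e^{\theta_M+V(\varphi)}\omega^n_\varphi}{\mu}\,e^{\theta_M+V(\varphi)}\omega^n_\varphi$, the term $r(\lambda)\int_M\varphi\,e^{\theta_M+V(\varphi)}\omega^n_\varphi$, and $\log\int_Me^{-r(\lambda)\varphi}d\mu$. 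Comparing with the definitions of $M_{V,\lambda}$ and $F_{V,\lambda}$ — in the difference $M_{V,\lambda}(\varphi)-F_{V,\lambda}(\varphi)$ the $-r(\lambda)E_V(\varphi)$ contributions cancel — this sum is exactly $M_{V,\lambda}(\varphi)-F_{V,\lambda}(\varphi)$, which is the asserted equality.

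For the inequality, I would observe that both $\mu_\varphi$ and $e^{\theta_M+V(\varphi)}\omega^n_\varphi$ are probability measures, so the quantity just computed is the Kullback--Leibler divergence of $e^{\theta_M+V(\varphi)}\omega^n_\varphi$ with respect to $\mu_\varphi$; Jensen's inequality applied to the convex function $-\log$ gives that it is $\ge 0$ (the value $+\infty$ being harmless), whence $F_{V,\lambda}(\varphi)\le M_{V,\lambda}(\varphi)$.

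The only delicate point is justifying the rearrangement for a general, possibly unbounded $\varphi\in\mathcal{E}^1_V(M,\omega_0)$. Here I would invoke Lemma \ref{L2.2} (so $|V(\varphi)|\le C$ almost everywhere for $e^{\theta_M+V(\varphi)}\omega^n_\varphi$, which is absolutely continuous with respect to $\omega^n_\varphi$) together with $\varphi\in\mathcal{E}^1(M,\omega_0)$ from Lemma \ref{L2.4}, which bound $\int_M|\varphi|\,e^{\theta_M+V(\varphi)}\omega^n_\varphi$ and make the manipulation legitimate; if the entropy term is $+\infty$ then so is $\int_M\log\frac{e^{\theta_M+V(\varphi)}\omega^n_\varphi}{\mu_\varphi}\,e^{\theta_M+V(\varphi)}\omega^n_\varphi$ and both the identity and the inequality hold trivially. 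This bookkeeping is the main, and rather mild, obstacle; the core computation is routine.
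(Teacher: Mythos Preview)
Your proposal is correct and follows essentially the same approach as the paper: expand $\log(e^{\theta_M+V(\varphi)}\omega^n_\varphi/\mu_\varphi)$ via the definition of $\mu_\varphi$, integrate against $e^{\theta_M+V(\varphi)}\omega^n_\varphi$, match the resulting expression with $M_{V,\lambda}(\varphi)-F_{V,\lambda}(\varphi)$, and then apply Jensen's inequality to see that the relative entropy is nonnegative. Your additional remarks on the normalization and on justifying the rearrangement for unbounded $\varphi$ via Lemmas~\ref{L2.2} and~\ref{L2.4} are care that the paper leaves implicit, but they do not change the route.
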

\begin{proof}
Observing that
\begin{align*}
& \int_M\log\frac{e^{\theta_M+V(\varphi)} \omega^n_{\varphi}}{\mu_\varphi}e^{\theta_M+V(\varphi)} \omega^n_{\varphi}=\int_M\log\frac{e^{\theta_M+V(\varphi)} \omega^n_{\varphi}}{\mu}e^{\theta_M+V(\varphi)} \omega^n_{\varphi}\\
& \qquad +r(\lambda)\cdot\int_M\varphi e^{\theta_M+V(\varphi)}\omega_\varphi^n+\log\int_Me^{-r(\lambda)\varphi}d\mu.
\end{align*}
By definition, we have
$$
 M_{V,\lambda}(\varphi)-\int_M\log\frac{e^{\theta_M+V(\varphi)} \omega^n_{\varphi}}{\mu_\varphi}e^{\theta_M+V(\varphi)} \omega^n_{\varphi}=F_{V,\lambda}(\varphi).
$$
Jensen's inequality implies that
$$
\int_M\log\frac{e^{\theta_M+V(\varphi)} \omega^n_{\varphi}}{\mu_\varphi}e^{\theta_M+V(\varphi)} \omega^n_{\varphi}\geq 0.
$$
Thus the proof is completed.
\end{proof}
\begin{definition}
We say that the functional $M_{V,\lambda}$ $(F_{V,\lambda})$ is proper if whenever $\varphi_j\in \mathcal{E}^1_V(M,\omega_0)$ is a sequence of functions such that $J_V(\varphi_j)\rightarrow+\infty$, then $M_{V,\lambda}(\varphi_j)\rightarrow+\infty$ $(F_{V,\lambda}(\varphi_j)\rightarrow+\infty)$.
\end{definition}
\begin{lemma}\label{L3.2}
Fix $0<\sigma<\alpha_\mu(\omega_0)$. There exists a constant $C_\sigma$ such that
$$
M_{V,\lambda}(\varphi)\geq \Big(\sigma-r(\lambda)\cdot\frac{n+\beta_M-\alpha_M}{n+1+\beta_M-\alpha_M}\Big)I_V(\varphi)-C_\sigma
$$
for all $\varphi\in \mathcal{E}^1_V(M,\omega_0)$ and $\sup_M\varphi=0$, where $\alpha_M$ and $\beta_M$ are defined in Proposition \ref{P2.2}. In particular, if $\alpha_\mu(\omega_0)>r(\lambda)\cdot\frac{n+\beta_M-\alpha_M}{n+1+\beta_M-\alpha_M}$, then $M_{V,\lambda}$ is proper.
\end{lemma}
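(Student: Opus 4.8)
The plan is to extract the required estimate from the entropy term of the twisted Mabuchi functional together with the $\alpha$-invariant, using the elementary Gibbs (Legendre duality) inequality for relative entropy. Put $\nu_\varphi:=e^{\theta_M+V(\varphi)}\omega_\varphi^n$. By Lemmas \ref{L2.1}--\ref{L2.2} the density $e^{\theta_M+V(\varphi)}$ is bounded by a constant independent of $\varphi$, so $\nu_\varphi$ is a non-pluripolar measure whose total mass is independent of $\varphi$ (a standard fact; in any case, by those lemmas, it lies between two fixed positive constants); we normalize $\nu_\varphi$ to be a probability measure, absorbing the resulting fixed constants into $C_\sigma$. Unwinding the definitions of $E_V$, $I_V$, $J_V$ one checks the identity $E_V(\varphi)-\int_M\varphi\,d\nu_\varphi=I_V(\varphi)-J_V(\varphi)$, so that, directly from the definition of $M_{V,\lambda}$,
\[
M_{V,\lambda}(\varphi)=-r(\lambda)\,(I_V(\varphi)-J_V(\varphi))+\int_M\log\frac{d\nu_\varphi}{d\mu}\,d\nu_\varphi .
\]
Since $\varphi\in\mathcal{E}^1_V(M,\omega_0)\subset\mathcal{E}^1(M,\omega_0)$ (Lemma \ref{L2.4}) and $\sup_M\varphi=0$, the integrals $\int_M\varphi\,\omega_\varphi^n$ and $\int_M\varphi\,e^{\theta_M}\omega_0^n$ are finite; and because $0<\sigma<\alpha_\mu(\omega_0)$ (Remark \ref{r3.2}), the definition of the $\alpha$-invariant supplies a constant $C_\sigma$, independent of $\varphi$, with $\int_M e^{-\sigma\varphi}\,d\mu\le C_\sigma$ for every $\varphi\in\PSH(M,\omega_0)$ normalized by $\sup_M\varphi=0$.

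The heart of the matter is a lower bound for the entropy term, which I would obtain from the $\alpha$-invariant by applying the Gibbs inequality
\[
\int_M\log\frac{d\nu_\varphi}{d\mu}\,d\nu_\varphi\ \ge\ \int_M f\,d\nu_\varphi-\log\int_M e^{f}\,d\mu ,
\]
valid for any measurable $f$ with $\int_M e^{f}\,d\mu<\infty$ since $\nu_\varphi$ is a probability measure, to the test function $f=-\sigma\varphi$. This would give
\[
\int_M\log\frac{d\nu_\varphi}{d\mu}\,d\nu_\varphi\ \ge\ \sigma\int_M(-\varphi)\,d\nu_\varphi-\log\int_M e^{-\sigma\varphi}\,d\mu\ \ge\ \sigma\int_M(-\varphi)\,d\nu_\varphi-\log C_\sigma .
\]
Because $\sup_M\varphi=0$, one has $\int_M(-\varphi)\,e^{\theta_M}\omega_0^n\ge 0$, and since $\int_M(-\varphi)\,d\nu_\varphi=I_V(\varphi)+\int_M(-\varphi)\,e^{\theta_M}\omega_0^n$ this forces $\int_M(-\varphi)\,d\nu_\varphi\ge I_V(\varphi)$, so the entropy term is bounded below by $\sigma I_V(\varphi)-\log C_\sigma$. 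Plugging this into the formula for $M_{V,\lambda}$ above and invoking Proposition \ref{P2.2} — whose second inequality rearranges to $I_V(\varphi)-J_V(\varphi)\le\frac{n+\beta_M-\alpha_M}{n+1+\beta_M-\alpha_M}I_V(\varphi)$ — I would conclude
\[
M_{V,\lambda}(\varphi)\ \ge\ \sigma I_V(\varphi)-r(\lambda)\,(I_V(\varphi)-J_V(\varphi))-\log C_\sigma\ \ge\ \Big(\sigma-r(\lambda)\frac{n+\beta_M-\alpha_M}{n+1+\beta_M-\alpha_M}\Big)I_V(\varphi)-\log C_\sigma ,
\]
which is the asserted estimate.

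For the last assertion: if $\alpha_\mu(\omega_0)>r(\lambda)\frac{n+\beta_M-\alpha_M}{n+1+\beta_M-\alpha_M}$, fix $\sigma$ strictly between these two numbers, so that $\varepsilon_0:=\sigma-r(\lambda)\frac{n+\beta_M-\alpha_M}{n+1+\beta_M-\alpha_M}>0$. Given $\varphi_j\in\mathcal{E}^1_V(M,\omega_0)$ with $J_V(\varphi_j)\to+\infty$, I would replace each $\varphi_j$ by $\varphi_j-\sup_M\varphi_j$ (legitimate, since both $J_V$ and $M_{V,\lambda}$ are unchanged by adding a constant); the first inequality of Proposition \ref{P2.2} then forces $I_V(\varphi_j)\to+\infty$, and the estimate just proved yields $M_{V,\lambda}(\varphi_j)\ge\varepsilon_0\,I_V(\varphi_j)-\log C_\sigma\to+\infty$, i.e. $M_{V,\lambda}$ is proper. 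The one genuinely delicate point is running the entropy/$\alpha$-invariant step for a general finite-energy (hence possibly unbounded) potential $\varphi$ together with the singular measure $\mu$: one must verify that $\nu_\varphi$ charges no pluripolar set and has finite mass, that $\int_M(-\varphi)\,d\nu_\varphi<\infty$ — which follows from $\varphi\in\mathcal{E}^1$ and the bound on $e^{\theta_M+V(\varphi)}$ — and that $\int_M e^{-\sigma\varphi}\,d\mu<\infty$, so that the Gibbs inequality genuinely applies; alternatively one runs the whole computation with $\varphi$ replaced by $\varphi_j:=\max(\varphi,-j)$ throughout and passes to the limit by Proposition \ref{P2.1}.
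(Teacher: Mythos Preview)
Your proof is correct and follows essentially the same route as the paper: both derive the entropy lower bound from the $\alpha$-invariant via Jensen's inequality (which you phrase as the Gibbs/Legendre duality inequality, an equivalent formulation), then use the identity $E_V(\varphi)-\int_M\varphi\,d\nu_\varphi=I_V(\varphi)-J_V(\varphi)$ together with $-\int_M\varphi\,d\nu_\varphi\ge I_V(\varphi)$ for $\sup_M\varphi=0$, and finally invoke Proposition~\ref{P2.2}. Your treatment is in fact slightly more explicit than the paper's about the normalization of $\nu_\varphi$ and about the approximation issues for finite-energy potentials, points which the paper leaves implicit.
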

\begin{proof}
By assumption
$$
\int_M e^{-\sigma\varphi-\log\frac{e^{\theta_M+V(\varphi)}\omega^n_\varphi}{\mu}}\cdot e^{\theta_M+V(\varphi)}\omega^n_\varphi=\int_Me^{-\sigma\varphi}d\mu\leq e^{C_\sigma}.
$$
Jensen's inequality implies that
$$
-\sigma\cdot\int_M\varphi e^{\theta_M+V(\varphi)}\omega^n_\varphi-C_\sigma\leq \int_M\log \frac{e^{\theta_M+V(\varphi)}\omega^n_\varphi}{\mu}\cdot e^{\theta_M+V(\varphi)}\omega^n_\varphi.
$$
By a direct calculation and Proposition \ref{P2.2} we have
\begin{align*}
M_{V,\lambda}(\varphi)&\geq -r(\lambda)\cdot \Big(E_V(\varphi)-\int_M\varphi e^{\theta_M+V(\varphi)} \omega^n_{\varphi}\Big)-\sigma\cdot\int_M\varphi e^{\theta_M+V(\varphi)}\omega^n_\varphi-C_\sigma\\
&\geq -r(\lambda)\cdot (I_V(\varphi)-J_V(\varphi))+\sigma\cdot I_V(\varphi)-C_\sigma\\
&\geq -r(\lambda)\cdot \frac{n+\beta_M-\alpha_M}{n+1+\beta_M-\alpha_M}I_V(\varphi)+\sigma\cdot I_V(\varphi)-C_\sigma\\
&= \Big(\sigma-r(\lambda)\cdot\frac{n+\beta_M-\alpha_M}{n+1+\beta_M-\alpha_M}\Big)I_V(\varphi)-C_\sigma.
\end{align*}
For the second argument, we choose $\sigma>r(\lambda)\cdot \frac{n+\beta_M-\alpha_M}{n+1+\beta_M-\alpha_M}$. According to Proposition \ref{P2.2}, $I_V(\varphi)\geq J_V(\varphi)\cdot \frac{n+1+\beta_M-\alpha_M}{n+\beta_M-\alpha_M}$. Thus
\begin{align*}
M_{V,\lambda}(\varphi)&\geq \Big(\sigma-r(\lambda)\cdot\frac{n+\beta_M-\alpha_M}{n+1+\beta_M-\alpha_M}\Big)
\cdot\frac{n+1+\beta_M-\alpha_M}{n+\beta_M-\alpha_M}J_V(\varphi)-C_\sigma\\
&=\Big(\sigma\cdot\frac{n+1+\beta_M-\alpha_M}{n+\beta_M-\alpha_M}-r(\lambda)\Big)J_V(\varphi)-C_\sigma.
\end{align*}
So the second argument holds.
\end{proof}
For each $\varphi\in \mathcal{E}^1_V(M,\omega_0)$, Theorem 2.18 \cite{BN14} says that there exists a unique $\psi\in \mathcal{E}^1_V(M,\omega_0)$ modulo constants such that
$$
e^{\theta_M+V(\psi)}\omega_\psi^n=\frac{e^{-r(\lambda)\varphi}\mu}{\int_Me^{-r(\lambda)\varphi}d\mu}.
$$
This argument has a connection with the so-called Ricci iteration, which is introduced in \cite{Ru}.
\begin{lemma}\label{L3.3}
For $\varphi,\psi\in \mathcal{E}^1_V(M,\omega_0)$ as above, we have
$$
F_{V,\lambda}(\psi)\leq F_{V,\lambda}(\varphi) \ \  \textrm{and} \ \ M_{V,\lambda}(\psi)\leq F_{V,\lambda}(\varphi).
$$
\end{lemma}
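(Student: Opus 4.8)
The plan is to derive both inequalities from a single analytic ingredient: the concavity of the energy $E_V$ along affine segments, expressed through the one-sided cocycle estimate
$$
\int_M(\psi-\varphi)\,e^{\theta_M+V(\psi)}\omega_\psi^n\ \le\ E_V(\psi)-E_V(\varphi)\ \le\ \int_M(\psi-\varphi)\,e^{\theta_M+V(\varphi)}\omega_\varphi^n
$$
valid for $\varphi,\psi\in\mathcal{E}^1_V(M,\omega_0)$; this is the soliton analogue of the familiar estimates for the Monge--Amp\`ere energy $E$ and is available from \cite{BN14} (and \cite{TZ00} in the smooth setting). Only the left-hand inequality will be used. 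Combined with Jensen's inequality applied to the probability measure $\mu_\varphi=e^{\theta_M+V(\psi)}\omega_\psi^n$ (recall $\psi$ was chosen so that $e^{\theta_M+V(\psi)}\omega_\psi^n=\mu_\varphi$), both assertions reduce to a manipulation of the definitions. I sketch the two computations below; note that the second does \emph{not} follow from the first together with Lemma \ref{L3.1}, since the latter only gives $M_{V,\lambda}(\psi)\ge F_{V,\lambda}(\psi)$, and it genuinely uses that $\psi$ solves the equation with right-hand side $\mu_\varphi$.

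For the bound $F_{V,\lambda}(\psi)\le F_{V,\lambda}(\varphi)$: since $r(\lambda)=1-(1-\lambda)m>0$, the left inequality gives $-r(\lambda)\big(E_V(\psi)-E_V(\varphi)\big)\le -r(\lambda)\int_M(\psi-\varphi)\,d\mu_\varphi$. Writing $-r(\lambda)(\psi-\varphi)=\log\big(e^{-r(\lambda)\psi}/e^{-r(\lambda)\varphi}\big)$, observing that the $\mu$-density of $\mu_\varphi$ is $e^{-r(\lambda)\varphi}/\int_M e^{-r(\lambda)\varphi}\,d\mu$, and applying Jensen to the probability measure $\mu_\varphi$ yields
$$
-r(\lambda)\int_M(\psi-\varphi)\,d\mu_\varphi\ \le\ \log\int_M\frac{e^{-r(\lambda)\psi}}{e^{-r(\lambda)\varphi}}\,d\mu_\varphi\ =\ \log\int_M e^{-r(\lambda)\psi}\,d\mu-\log\int_M e^{-r(\lambda)\varphi}\,d\mu.
$$
Substituting this into $F_{V,\lambda}(\psi)-F_{V,\lambda}(\varphi)=-r(\lambda)\big(E_V(\psi)-E_V(\varphi)\big)-\log\int_M e^{-r(\lambda)\psi}\,d\mu+\log\int_M e^{-r(\lambda)\varphi}\,d\mu$ cancels every term, leaving $F_{V,\lambda}(\psi)\le F_{V,\lambda}(\varphi)$.

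For the bound $M_{V,\lambda}(\psi)\le F_{V,\lambda}(\varphi)$: using $e^{\theta_M+V(\psi)}\omega_\psi^n=\mu_\varphi$ and $\log(d\mu_\varphi/d\mu)=-r(\lambda)\varphi-\log\int_M e^{-r(\lambda)\varphi}\,d\mu$, the entropy term of $M_{V,\lambda}(\psi)$ rewrites as $-r(\lambda)\int_M\varphi\,d\mu_\varphi-\log\int_M e^{-r(\lambda)\varphi}\,d\mu$, so that $M_{V,\lambda}(\psi)=-r(\lambda)E_V(\psi)+r(\lambda)\int_M(\psi-\varphi)\,d\mu_\varphi-\log\int_M e^{-r(\lambda)\varphi}\,d\mu$. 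Subtracting $F_{V,\lambda}(\varphi)=-r(\lambda)E_V(\varphi)-\log\int_M e^{-r(\lambda)\varphi}\,d\mu$ gives
$$
M_{V,\lambda}(\psi)-F_{V,\lambda}(\varphi)=-r(\lambda)\Big(E_V(\psi)-E_V(\varphi)-\int_M(\psi-\varphi)\,e^{\theta_M+V(\psi)}\omega_\psi^n\Big),
$$
which is $\le 0$ by the left inequality above and $r(\lambda)>0$. (Both functionals are unchanged if $\psi$ is replaced by $\psi+c$, using the normalization $\int_M e^{\theta_M}\omega_0^n=1$ from Definition \ref{D3.1}, so the statement is well posed.)

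The part that will require real work is justifying the cocycle estimate for $E_V$ on the finite energy class $\mathcal{E}^1_V(M,\omega_0)$ and securing the integrability needed for Jensen, i.e. finiteness of $\int_M(\psi-\varphi)\,d\mu_\varphi$ and $\int_M\varphi\,d\mu_\varphi$; the latter follows from $\varphi,\psi\in\mathcal{E}^1(M,\omega_0)$ (Lemma \ref{L2.4}) together with the mutual integrability of $\mathcal{E}^1$-potentials against each other's Monge--Amp\`ere measures and the bound $|V(\psi)|\le C$ a.e. $[\omega_\psi^n]$ from Lemma \ref{L2.2}. I expect to handle the cocycle estimate exactly as in the earlier lemmas: truncate $\varphi,\psi$ to $\max(\varphi,-j),\max(\psi,-j)$, prove the estimate first in the bounded case, where one may further pass to smooth $T$-invariant approximants via \cite{BK07} and invoke Proposition A.1 of \cite{M1} and Theorem 2.7 of \cite{BN14}, and then let $j\to\infty$ using Lemma \ref{L2.3}, Proposition \ref{P2.1} and its corollary, with Lemma \ref{L2.2} providing uniform control of the densities throughout.
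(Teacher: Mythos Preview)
Your proposal is correct and follows essentially the same approach as the paper: both arguments rest on the concavity of $E_V$ (the paper cites Proposition 2.17 of \cite{BN14} for the inequality $E_V(\varphi)-E_V(\psi)\le\int_M(\varphi-\psi)\,e^{\theta_M+V(\psi)}\omega_\psi^n$, exactly your left cocycle bound) together with Jensen's inequality applied to the probability measure $\mu_\varphi$. The only cosmetic difference is that the paper first normalizes $\varphi,\psi$ so that $\int_M e^{-r(\lambda)\varphi}\,d\mu=\int_M e^{-r(\lambda)\psi}\,d\mu=1$, which reduces the first inequality to showing $E_V(\psi)\ge E_V(\varphi)$, whereas you keep the log terms and cancel them at the end; the second inequality is computed identically in both.
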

\begin{proof}
By the Proposition 2.15 \cite{BN14}, we see that $F_{V,\lambda}(\varphi+C)=F_{V,\lambda}(\varphi)$ and $M_{V,\lambda}(\varphi+C)=M_{V,\lambda}(\varphi)$. To getting the first inequality we only show that $E_V(\psi)\geq E_V(\varphi)$ by assuming that $\int_M e^{-r(\lambda)\varphi}d\mu=\int_M e^{-r(\lambda)\psi}d\mu=1$. Let $\phi_s:=s\varphi+(1-s)\psi, s\in [0,1]$. Proposition 2.17 \cite{BN14} says that $E_V(\phi_s)$ is concave about $s$. This implies
\begin{align*}
E_V(\varphi)-E_V(\psi)&\leq \int_M(\varphi-\psi)e^{\theta_M+V(\psi)}\omega_\psi^n=\int_M(\varphi-\psi)
e^{-r(\lambda)\varphi}d\mu\\
&=\frac{1}{r(\lambda)}\cdot \int_M\log\frac{\mu_\psi}{\mu_\varphi}d\mu_\varphi\leq \frac{1}{r(\lambda)}\cdot \log\int_Md\mu_\psi=0
\end{align*}
Therefore the first inequality holds. Observing that
\begin{align*}
M_{V,\lambda}(\psi)&=-r(\lambda)\cdot E_V(\psi)+r(\lambda)\cdot \int_M \psi e^{-r(\lambda)\varphi}d\mu+\int_M\log\frac{e^{-r(\lambda)\varphi}\mu}{\mu}e^{-r(\lambda)\varphi}d\mu\\
&=-r(\lambda)\cdot E_V(\psi)-r(\lambda)\cdot\int_M(\varphi-\psi)e^{-r(\lambda)\varphi}d\mu\\
&=-r(\lambda)\cdot E_V(\psi)-r(\lambda)\cdot\int_M(\varphi-\psi)e^{\theta_M+V(\psi)}\omega_\psi^n\\
&\leq -r(\lambda)\cdot E_V(\varphi)=F_{V,\lambda}(\varphi).
\end{align*}
The proof is completed.
\end{proof}
\begin{lemma}\label{L3.4}
Fix $C_1>0$, there exists a constant $C'$ such that the sublevel set $\{\varphi\in \mathcal{E}^1_V(M,\omega_0)|J_V(\varphi)\leq C_1 \ \ \textrm{and}\ \ \sup_M\varphi=0\}$ is contained in $\mathcal{E}^1_{V,C'}(M,\omega_0)$.
\end{lemma}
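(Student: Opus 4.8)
The plan is to reduce the lemma to the identity
$$
J_V(\varphi)=\int_M\varphi\,e^{\theta_M}\omega_0^n-E_V(\varphi),\qquad\varphi\in\mathcal{E}^1_V(M,\omega_0),
$$
together with the standard uniform $L^1$-bound for normalized $\omega_0$-plurisubharmonic functions. For the identity, note that in the integrand defining $J_V$ the measure $e^{\theta_M}\omega_0^n$ does not depend on $s$, so formally
$$
J_V(\varphi)=\int_0^1\!\Big(\int_M\varphi\,e^{\theta_M}\omega_0^n\Big)ds-\int_0^1\!\!\int_M\varphi\,e^{\theta_M+sV(\varphi)}\omega^n_{s\varphi}\wedge ds=\int_M\varphi\,e^{\theta_M}\omega_0^n-E_V(\varphi),
$$
where the first integral is finite by the $L^1$-bound below ($\theta_M$ being bounded) and the second equals $E_V(\varphi)$, finite since $\varphi\in\mathcal{E}^1_V(M,\omega_0)$, by the corollary following Proposition \ref{P2.1}. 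To make the splitting rigorous I would run it through the canonical truncations $\varphi_j:=\max(\varphi,-j)\in\mathcal{E}^1_V(M,\omega_0)\cap L^\infty(M)$, for which the identity is immediate from the definitions, and then let $j\to\infty$, using $E_V(\varphi_j)\to E_V(\varphi)$ (Lemma \ref{L2.3}, since $\varphi_j\downarrow\varphi$), $J_V(\varphi_j)\to J_V(\varphi)$ (Proposition \ref{P2.1}, as in the proof of Proposition \ref{P2.2}), $\varphi\in\mathcal{E}^1_V(M,\omega_0)\subset\mathcal{E}^1(M,\omega_0)$ (Lemma \ref{L2.4}), and $\int_M\varphi_j\,e^{\theta_M}\omega_0^n\to\int_M\varphi\,e^{\theta_M}\omega_0^n$ by monotone convergence.

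Next I would record two elementary facts: $\alpha_M\le\theta_M\le\beta_M$ (Proposition \ref{P2.2}), so that $e^{\theta_M}\omega_0^n\le e^{\beta_M}\omega_0^n$ as measures; and there is a constant $C_0$, independent of $\varphi$, with $\int_M(-\varphi)\,\omega_0^n\le C_0$ for every $\varphi\in\PSH(M,\omega_0)$ satisfying $\sup_M\varphi=0$ — this is the $L^1$-compactness of normalized $\omega_0$-plurisubharmonic functions (Hartog's Lemma, Theorem 1.46 of \cite{VGBook}), already invoked in the proof of Lemma \ref{L2.5}. Combining them gives, for any such $\varphi$,
$$
\int_M\varphi\,e^{\theta_M}\omega_0^n=-\int_M(-\varphi)\,e^{\theta_M}\omega_0^n\ge-e^{\beta_M}\int_M(-\varphi)\,\omega_0^n\ge-e^{\beta_M}C_0.
$$

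Finally, if $\varphi$ lies in the sublevel set in question — that is, $\varphi\in\mathcal{E}^1_V(M,\omega_0)$, $\sup_M\varphi=0$ and $J_V(\varphi)\le C_1$ — then the identity and the last estimate yield
$$
E_V(\varphi)=\int_M\varphi\,e^{\theta_M}\omega_0^n-J_V(\varphi)\ge-e^{\beta_M}C_0-C_1=:-C',
$$
and since $\sup_M\varphi=0\le0$ as well, this is precisely the assertion $\varphi\in\mathcal{E}^1_{V,C'}(M,\omega_0)$, with $C'$ depending only on $C_1$ and on the fixed data. I do not anticipate a genuine obstacle here; the only step that needs care is the passage from bounded to arbitrary $\varphi\in\mathcal{E}^1_V(M,\omega_0)$ in the identity $J_V(\varphi)=\int_M\varphi\,e^{\theta_M}\omega_0^n-E_V(\varphi)$, which is exactly where the convergence results of Section 2 (Lemma \ref{L2.3}, Proposition \ref{P2.1}, Lemma \ref{L2.4}) are used.
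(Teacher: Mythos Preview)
Your proof is correct and follows essentially the same route as the paper: both arguments use the identity $J_V(\varphi)=\int_M\varphi\,e^{\theta_M}\omega_0^n-E_V(\varphi)$ together with the uniform $L^1$-bound for normalized $\omega_0$-psh functions to conclude $E_V(\varphi)\ge -C'$. The only difference is that you take more care in justifying the identity for general $\varphi\in\mathcal{E}^1_V(M,\omega_0)$ via canonical truncations, whereas the paper invokes it directly from the definition of $J_V$; the $L^1$-bound is cited from \cite{Da} in the paper and from Hartog's lemma in your version, but the substance is identical.
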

\begin{proof}
For $\varphi\in \mathcal{E}^1_V(M,\omega_0)$ and $\sup_M\varphi=0$, Lemma 3.45 \cite{Da} says that there exists a constant $A$ such that
$$
\int_M\varphi\omega^n_0\leq \sup_M\varphi\leq \int_M \varphi\omega_0^n+A.
$$
Furthermore we have
$$
\int_M\varphi e^{\theta_M}\omega_0^n\geq e^C\cdot \int_M\varphi\omega_0^n\geq -Ae^C.
$$
Let $C'=C_1+Ae^C$, by the definition of $J_V(\varphi)$ we conclude that $E_V(\varphi)\geq -C'$.
\end{proof}
Given an upper semi-continuous $T$-invariant function $h$, we define
$$
P(h)(x):=\sup\{\psi(x)\in \mathbb{R}|\psi\in \PSH(M,\omega_0)^T\ \ \textrm{and}\ \ \psi\leq h\}.
$$
\begin{remark}\label{r3.3}
If we define $P(h)'(x):=\sup\{\psi(x)\in \mathbb{R}|\psi\in \PSH(M,\omega_0)\ \ \textrm{and}\ \ \psi\leq h\}$, then $P(h)=P(h)'$. In fact, on one hand $P(h)\leq P(h)'$ by the definitions. On the other hand, we denote $\overline{P(h)'}$ by the average of $P(h)'$ along the compact group $T$, then $P(h)'\leq h$ and $\max(P(h)',\overline{P(h)'})\in \PSH(M,\omega_0)$. By the definition of $P(h)'$, we have $\max(P(h)',\overline{P(h)'})=P(h)'=\overline{P(h)'}$.
\end{remark}
Proposition 2.16 \cite{BN14} gives
\begin{lemma}\label{L3.5}
Let $w$ be a non-negative $T$-invariant continuous function and $\varphi\in \mathcal{E}^1_V(M,\omega_0)$. Then we have
$$
\frac{d}{dt}E_V(p(\varphi+tw))\Big|_{t=0}=\int_M we^{\theta_M+V(\varphi)}\omega_\varphi^n.
$$
\end{lemma}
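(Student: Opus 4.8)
The plan is to adapt the Berman--Boucksom differentiability argument for the Monge--Amp\`ere energy of an envelope (Proposition 2.16 \cite{BN14}) to the twisted functional $E_V$. Set $g(t):=E_V(P(\varphi+tw))$; since $\varphi\in\PSH(M,\omega_0)^T$ we have $P(\varphi)=\varphi$ and $g(0)=E_V(\varphi)$. First I would check that the competitor $P(\varphi+tw)$ stays in a controlled position: it always satisfies $P(\varphi+tw)\le\varphi+tw$, while for $t\ge0$ the monotonicity of $P$ gives $P(\varphi+tw)\ge P(\varphi)=\varphi$, and for $t<0$, with $C_w:=\sup_M w$, one has $P(\varphi+tw)\ge P(\varphi+tC_w)=\varphi+tC_w$. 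Hence $P(\varphi+tw)-\varphi$ is bounded and lies between $0$ and $tw$; in particular $P(\varphi+tw)\in\mathcal{E}^1_V(M,\omega_0)$, it has full Monge--Amp\`ere mass, and $t\mapsto P(\varphi+tw)$ is increasing in $t$, $L^1$-continuous, and concave for the pointwise order (average two competitors). Since $E_V$ is concave along affine segments (Proposition 2.17 \cite{BN14}) and, by the subgradient inequality below, monotone for the pointwise order, the composition $g$ is concave, so $g'_+(0)$ and $g'_-(0)$ exist and $g'_+(0)\le g'_-(0)$.

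Next I would record the two subgradient inequalities for $E_V$ coming from concavity together with the Tian--Zhu derivative formula (\cite{TZ00}, Propositions 2.15 and 2.17 \cite{BN14}): for $\phi,\psi\in\mathcal{E}^1_V(M,\omega_0)$,
\[
\int_M(\psi-\phi)e^{\theta_M+V(\psi)}\omega_\psi^n\ \le\ E_V(\psi)-E_V(\phi)\ \le\ \int_M(\psi-\phi)e^{\theta_M+V(\phi)}\omega_\phi^n.
\]
Taking $\psi=P(\varphi+tw)$ and $\phi=\varphi$ in the right-hand inequality and using $P(\varphi+tw)-\varphi\le tw$ together with $e^{\theta_M+V(\varphi)}\omega_\varphi^n\ge0$ gives, for $t\ge0$, $g(t)-g(0)\le t\int_M w\,e^{\theta_M+V(\varphi)}\omega_\varphi^n$ and the reverse for $t\le0$; hence $g'_+(0)\le\int_M w\,e^{\theta_M+V(\varphi)}\omega_\varphi^n\le g'_-(0)$.

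For the matching inequalities I would use the orthogonality property of envelopes. Since $\omega_0$ is $T$-invariant, Remark \ref{r3.3} lets us replace $P$ by the non-equivariant envelope, for which the non-pluripolar measure $\omega_{P(\varphi+tw)}^n$ is supported on the contact set $\{P(\varphi+tw)=\varphi+tw\}$ (\cite{BEGZ10}, \cite{BBGZ13}); as the density $e^{\theta_M+V(P(\varphi+tw))}$ is bounded (Lemma \ref{L2.1}), the same holds for the twisted measure. On the contact set $P(\varphi+tw)-\varphi=tw$, so the left-hand subgradient inequality with $\psi=P(\varphi+tw)$, $\phi=\varphi$ yields
\[
g(t)-g(0)\ \ge\ \int_M(P(\varphi+tw)-\varphi)e^{\theta_M+V(P(\varphi+tw))}\omega_{P(\varphi+tw)}^n\ =\ t\int_M w\,e^{\theta_M+V(P(\varphi+tw))}\omega_{P(\varphi+tw)}^n.
\]
As $t\downarrow0$ the potentials $P(\varphi+tw)$ decrease to $\varphi$ with full mass; by Lemma \ref{L2.1} their $V$-derivatives are uniformly bounded and, arguing as in the proof of that lemma, converge a.e.\ to $V(\varphi)$, so Theorem 2.7 \cite{BN14} and Proposition \ref{P2.1} give $e^{\theta_M+V(P(\varphi+tw))}\omega_{P(\varphi+tw)}^n\rightharpoonup e^{\theta_M+V(\varphi)}\omega_\varphi^n$ weakly, whence $g'_+(0)\ge\int_M w\,e^{\theta_M+V(\varphi)}\omega_\varphi^n$. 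Combined with the previous paragraph, $g'_+(0)=g'_-(0)=\int_M w\,e^{\theta_M+V(\varphi)}\omega_\varphi^n$, which is the claim; the left derivative is handled symmetrically (or by concavity).

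The step I expect to be the main obstacle is transferring the limit $t\to0$ through the twisted Monge--Amp\`ere operator, i.e.\ upgrading the decreasing convergence $P(\varphi+tw)\downarrow\varphi$ to weak convergence of $e^{\theta_M+V(\cdot)}\omega_{(\cdot)}^n$ when $\varphi$ is unbounded; this needs the uniform bound and a.e.\ convergence of $V(P(\varphi+tw))$ (Lemma \ref{L2.1}) combined with continuity of the unweighted non-pluripolar Monge--Amp\`ere along the decreasing family (the masses being constantly $a$). A clean way to organize it is to prove the lemma first for $\varphi\in\mathcal{E}^1_V(M,\omega_0)\cap L^\infty(M)$, where all these convergences are standard, and then pass to general $\varphi$ through the canonical approximation $\max(\varphi,-j)$ and Proposition \ref{P2.1}. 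One also has to make sure the envelope orthogonality is available in the present big, non-K\"ahler, $T$-invariant setting, which is exactly why Remark \ref{r3.3} and the big-class results of \cite{BEGZ10}, \cite{BBGZ13} enter.
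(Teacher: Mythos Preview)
The paper does not prove Lemma \ref{L3.5} at all: it simply records that ``Proposition 2.16 \cite{BN14} gives'' the statement. Your proposal is precisely an unpacking of that cited argument, adapted to the $V$-twisted energy $E_V$: concavity of $E_V$, the subgradient inequalities, the orthogonality relation for the envelope, and the continuity of the twisted Monge--Amp\`ere operator along the decreasing family $P(\varphi+tw)\downarrow\varphi$. So your approach and the paper's are the same by design; what you have written is essentially the proof behind the citation, and it is correct.

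Two minor comments on the write-up. First, the sentence ``$P(\varphi+tw)-\varphi$ \dots\ lies between $0$ and $tw$'' is only accurate for $t\ge0$; for $t<0$ the correct sandwich is $tC_w\le P(\varphi+tw)-\varphi\le tw$, which is what you actually derived. Second, for the weak convergence of $e^{\theta_M+V(P(\varphi+tw))}\omega_{P(\varphi+tw)}^n$ as $t\downarrow0$ you only obtain a.e.\ convergence of $V(P(\varphi+tw))$ along a subsequence (via $L^q$ convergence of gradients), but since $g$ is concave and $g'_+(0)$ already exists, verifying the inequality along any single sequence $t_k\downarrow0$ suffices; you may want to say this explicitly. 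Neither point affects the validity of the argument.
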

Next we give the main theorems of this section.
\begin{theorem}\label{T3.1}
If the twisted Mabuchi functional $M_{V,\lambda}$ is proper, then there exists $\varphi\in \mathcal{E}^1_V(M,\omega_0)$ solving
$$
e^{\theta_M+V(\varphi)}\omega_\varphi^n=\frac{e^{-r(\lambda)\varphi}\mu}{\int_Me^{-r(\lambda)\varphi}d\mu}.
$$
\end{theorem}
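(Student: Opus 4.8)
The plan is to run the standard variational argument of Berman–Boucksom–Guedj–Zeriahi, adapted to the soliton setting, showing that a minimizer of the twisted Ding functional $F_{V,\lambda}$ exists and satisfies the Euler–Lagrange equation which is precisely the twisted Kähler–Ricci soliton equation. First I would pass to the normalized space $\mathcal{E}^1_V(M,\omega_0)$ with $\sup_M\varphi=0$, which is legitimate since both $F_{V,\lambda}$ and $M_{V,\lambda}$ are invariant under adding constants (Proposition 2.15 \cite{BN14}). Take a minimizing sequence $\varphi_j$ for $F_{V,\lambda}$. By Lemma \ref{L3.1} we have $F_{V,\lambda}\leq M_{V,\lambda}$, and the Ricci iteration of Lemma \ref{L3.3} lets me replace $\varphi_j$ by the solution $\psi_j$ of $e^{\theta_M+V(\psi_j)}\omega_{\psi_j}^n = \mu_{\varphi_j}$, which only decreases $F_{V,\lambda}$ while forcing $M_{V,\lambda}(\psi_j)\leq F_{V,\lambda}(\varphi_j)$; thus $M_{V,\lambda}(\psi_j)$ stays bounded above. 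Since $M_{V,\lambda}$ is proper, $J_V(\psi_j)$ is bounded, so by Lemma \ref{L3.4} the $\psi_j$ lie in some $\mathcal{E}^1_{V,C'}(M,\omega_0)$, which is $L^1$-compact by Lemma \ref{L2.5}. Extract an $L^1$-limit $\psi_\infty\in\mathcal{E}^1_{V,C'}(M,\omega_0)$.

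Next I would show $\psi_\infty$ is a minimizer: $F_{V,\lambda}$ is lower semi-continuous on $\mathcal{E}^1_{V,C'}(M,\omega_0)$ by Proposition \ref{P3.1}, so $F_{V,\lambda}(\psi_\infty)\leq \liminf F_{V,\lambda}(\psi_j) = \inf F_{V,\lambda}$, hence $\psi_\infty$ realizes the infimum over that sublevel set, and a standard argument (the infimum of $F_{V,\lambda}$ over all of $\mathcal{E}^1_V$ equals the infimum over the sublevel set, again using the Ricci iteration to push any competitor into the set) shows $\psi_\infty$ is a global minimizer. Finally I would derive the Euler–Lagrange equation. For a non-negative $T$-invariant continuous $w$, consider $g(t):= F_{V,\lambda}(P(\psi_\infty+tw))$. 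Using Lemma \ref{L3.5} for the derivative of $E_V\circ P$ and the elementary derivative of $t\mapsto -\log\int_M e^{-r(\lambda)(\psi_\infty+tw)}d\mu$ (here one uses $P(\psi_\infty+tw)\leq \psi_\infty+tw$ with equality $\mu$-a.e. on the contact set, which carries the Monge–Ampère mass), the one-sided derivative of $g$ at $t=0^+$ is
\[
g'(0^+) = -r(\lambda)\int_M w\, e^{\theta_M+V(\psi_\infty)}\omega_{\psi_\infty}^n + r(\lambda)\int_M w\, d\mu_{\psi_\infty} \geq 0,
\]
and since $w$ is arbitrary non-negative this forces $e^{\theta_M+V(\psi_\infty)}\omega_{\psi_\infty}^n \geq \mu_{\psi_\infty}$ as measures; both sides being probability measures, they are equal, so $\varphi:=\psi_\infty$ solves the desired equation.

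The main obstacle I anticipate is the rigorous justification of the variational derivative computation in the degenerate/singular setting: one must know that the Monge–Ampère measure $e^{\theta_M+V(\psi_\infty)}\omega_{\psi_\infty}^n$ does not charge the exceptional locus or pluripolar sets, that the projection $P$ interacts correctly with the twisted measure, and that $V(\psi_\infty)$ is well-defined and bounded along the relevant measures. These are exactly the points prepared for in Section 2: Lemma \ref{L2.2} gives the a.e. bound on $V(\cdot)$ against Monge–Ampère measures of finite-energy potentials, Proposition \ref{P2.1} gives the weak continuity needed to pass canonical approximations $\psi_\infty^{(k)}=\max(\psi_\infty,-k)$ to the limit, and the corollary after Proposition \ref{P2.1} identifies $E_V$ on $\mathcal{E}^1_V$ with its integral formula. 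So the strategy is to prove the differentiability identity first for bounded potentials via the smooth approximation of \cite{BK07} together with the non-twisted result Lemma \ref{L3.5}, and then remove the boundedness hypothesis by the truncation-and-limit machinery of Section 2. A secondary technical point is checking that the Ricci-iteration solution $\psi_j$ from Theorem 2.18 \cite{BN14} indeed lies in $\mathcal{E}^1_V$ with the normalization handled consistently, but this is routine given the cited results.
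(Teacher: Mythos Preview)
Your approach is essentially the paper's: use properness of $M_{V,\lambda}$ together with the Ricci iteration (Lemma~\ref{L3.3}) and Lemma~\ref{L3.1} to locate a minimizer of $F_{V,\lambda}$ on a compact sublevel set, then derive the Euler--Lagrange equation via the projection $P$ and Lemma~\ref{L3.5}. The existence-of-minimizer part is fine.

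There is one imprecision in the Euler--Lagrange step. You set $g(t)=F_{V,\lambda}\big(P(\psi_\infty+tw)\big)$, whose second term is $-\log\int_M e^{-r(\lambda)P(\psi_\infty+tw)}\,d\mu$, but you then differentiate $-\log\int_M e^{-r(\lambda)(\psi_\infty+tw)}\,d\mu$ instead. Your parenthetical justification (``equality $\mu$-a.e.\ on the contact set, which carries the Monge--Amp\`{e}re mass'') conflates two different measures: it is $\omega_{\psi_\infty}^n$, not $\mu$, that is concentrated on the contact set, and this fact is what underlies Lemma~\ref{L3.5} for the $E_V$-term, not the $\mu$-integral. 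The paper sidesteps this by working from the start with the hybrid
\[
g(t):=-r(\lambda)\,E_V\big(P(\varphi+tw)\big)-\log\int_M e^{-r(\lambda)(\varphi+tw)}\,d\mu,
\]
whose second term is trivially differentiable. Then $P(\varphi+tw)\le\varphi+tw$ yields the sandwich $g(0)=F_{V,\lambda}(\varphi)\le F_{V,\lambda}\big(P(\varphi+tw)\big)\le g(t)$ for all $t$, so $g'(0)=0$ and one obtains equality of the two measures directly against $T$-invariant test functions (and hence against all continuous functions by averaging), without needing the ``both are probability measures'' trick. With this small adjustment your argument is complete.
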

\begin{proof}
By the assumption that $M_{V,\lambda}$ is proper and Lemma \ref{L3.4}, we have
$$
\inf_{\mathcal{E}^1_V(M,\omega_0)}M_{V,\lambda}=\inf_{\mathcal{E}^1_{V,C}(M,\omega_0)}M_{V,\lambda}
$$
where $C$ is a constant as Lemma \ref{L3.4}. It follows from Lemma \ref{L3.1} and Lemma \ref{L3.3} that
$$
\inf_{\mathcal{E}^1_{V,C}(M,\omega_0)}M_{V,\lambda}=\inf_{\mathcal{E}^1_{V,C}(M,\omega_0)}F_{V,\lambda}
=\inf_{\mathcal{E}^1_V(M,\omega_0)}F_{V,\lambda}.
$$
Since $F_{V,\lambda}$ is lower semi-continuous on the compact set $\mathcal{E}^1_{V,C}(M,\omega_0)$, we can find $\varphi\in \mathcal{E}^1_{V,C}(M,\omega_0)$ which minimizes the functional $F_{V,\lambda}$ on $\mathcal{E}^1_V(M,\omega_0)$. Fix an arbitrary non-negative $T$-invariant continuous function $w$ and consider
$$
g(t):=-r(\lambda)\cdot E_V(P(\varphi+tw))-\log\int_Me^{-r(\lambda)(\varphi+tw)}d\mu.
$$
Lemma \ref{L3.5} implies that
$$
\frac{d}{dt}g(t)\big|_{t=0}=-r(\lambda)\cdot \int_M we^{\theta_M+V(\varphi)}\omega^n_\varphi+r(\lambda)\cdot\frac{\int_Mwe^{-r(\lambda)\varphi}d\mu}{\int_Me^{-r(\lambda)\varphi}d\mu}
$$
Now $P(\varphi+tw)\leq \varphi+tw$ gives
$$
g(0)\leq F_{V,\lambda}(P(\varphi+tw))\leq g(t),
$$
since $\varphi$ is a minimizer. Therefore,
$$
\int_M we^{\theta_M+V(\varphi)}\omega^n_\varphi=\frac{\int_Mwe^{-r(\lambda)\varphi}d\mu}{\int_Me^{-r(\lambda)\varphi}d\mu}.
$$
Finally we note that $e^{\theta_M+V(\varphi)}\omega^n_\varphi$ and $e^{-r(\lambda)\varphi}d\mu$ are $T$-invariant measures, so given a continuous function $f$, the integral of $f$ with respect to these measures equal to that of the average of $f$ along the compact group $T$.
\end{proof}
\begin{theorem}\label{T3.2}
Assume that the twisted Mabuchi functional $M_{V,\lambda}$ is proper, then we have
\begin{enumerate}
\item $Aut^0(M,V,\omega_0)={1}$, where $Aut^0(M,V,\omega_0)$ denotes the identity component of automorphism group which preserves the form $\omega_0$ and the holomorphic vector field $V$.
\item $M$ admits a unique twisted K\"{a}hler-Ricci soliton.
\end{enumerate}
\end{theorem}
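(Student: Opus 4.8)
The existence claim in (2) is immediate from Theorem \ref{T3.1}: it produces $\varphi\in\mathcal{E}^1_V(M,\omega_0)$ with $e^{\theta_M+V(\varphi)}\omega_\varphi^n=e^{-r(\lambda)\varphi}\mu/\int_M e^{-r(\lambda)\varphi}d\mu$, and by the Remark after Definition \ref{D3.1} this is exactly a twisted K\"{a}hler--Ricci soliton for $(M_0,V_0,(1-\lambda)\omega_{FS})$, the probability normalisation being built into the equation. So the real content is (1) together with the uniqueness in (2), and the plan is to derive both from the behaviour of $F_{V,\lambda}$ and $M_{V,\lambda}$ along weak geodesics in $\mathcal{E}^1_V(M,\omega_0)$, following \cite{BN14} and \cite{BBGZ13}: along a weak geodesic $t\mapsto\varphi_t$ the energy $E_V$ is affine and $\varphi\mapsto-\log\int_M e^{-r(\lambda)\varphi}d\mu$ is convex (Berndtsson's theorem), so $F_{V,\lambda}$ is convex along geodesics and strictly so unless the geodesic is induced by a holomorphic vector field commuting with $V$; and $M_{V,\lambda}$ is likewise convex along geodesics, and affine (with slope the twisted Futaki invariant $\Fut_{(1-\lambda)\alpha_0,V_0}$) along those induced by holomorphic vector fields. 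I would also use that, by Theorem \ref{T3.1}, a twisted soliton exists, so by the Matsushima--Tian--Zhu argument the relevant automorphism group is reductive.

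\emph{Proof of (1).} Argue by contradiction: if $\Aut^0(M,V,\omega_0)\neq\{1\}$, there is a nonzero holomorphic vector field $X$ on $M$ commuting with $V$ and not generated by $V$ itself; by reductivity (so that $\mathfrak{aut}$ splits as $\mathfrak{g}\oplus J\mathfrak{g}$), after replacing $X$ by $JX$ if necessary the flow $\sigma_t:=\exp(tX)$ is not $\omega_0$--isometric, so the normalised $T$--invariant potentials $\varphi_t$ of $\sigma_t^*\omega_0$ form a nontrivial weak geodesic ray with $\varphi_0=0$. Then $J_V(\varphi_t)$ is convex in $t$, vanishes only at $t=0$ (Proposition \ref{P2.2}, together with the fact that $I_V(\psi)=0$ forces $\psi$ constant), hence $J_V(\varphi_t)\to+\infty$ as $|t|\to\infty$; while $M_{V,\lambda}(\varphi_t)$ is affine in $t$, hence bounded above on one of the rays $\{t\geq 0\}$, $\{t\leq 0\}$. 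Along that ray $J_V(\varphi_t)\to+\infty$ but $M_{V,\lambda}(\varphi_t)$ stays bounded, contradicting the properness hypothesis. Hence $\Aut^0(M,V,\omega_0)=\{1\}$.

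\emph{Proof of uniqueness in (2).} Let $\varphi_0,\varphi_1\in\mathcal{E}^1_V(M,\omega_0)$ both solve the equation; since adding a constant to a solution gives a solution, normalise them. First I check each $\varphi_i$ minimises $F_{V,\lambda}$: by Theorem \ref{T3.1} and its proof $\inf F_{V,\lambda}=\inf M_{V,\lambda}$ is attained; by Lemma \ref{L3.1} the Jensen defect vanishes at a solution, so $F_{V,\lambda}(\varphi_i)=M_{V,\lambda}(\varphi_i)$; and along any weak geodesic $\gamma$ issuing from $\varphi_i$ the one--sided derivative of $F_{V,\lambda}$ at $\varphi_i$ equals $r(\lambda)\int_M\dot\gamma(0)\,(d\mu_{\varphi_i}-e^{\theta_M+V(\varphi_i)}\omega_{\varphi_i}^n)$, which vanishes because $\varphi_i$ solves the equation, so convexity of $F_{V,\lambda}$ makes $\varphi_i$ a global minimiser. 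Now let $t\mapsto\gamma(t)$ be the weak geodesic from $\varphi_0$ to $\varphi_1$: the convex function $F_{V,\lambda}(\gamma(t))$ has equal endpoint values $\inf F_{V,\lambda}$ and lies above $\inf F_{V,\lambda}$, hence is constant; by the rigidity clause, $\gamma$ is induced by a holomorphic vector field commuting with $V$, so $\omega_{\varphi_1}=\sigma^*\omega_{\varphi_0}$ for some $\sigma\in\Aut^0(M,V,\omega_0)$. By (1), $\sigma$ is the identity, whence $\varphi_0-\varphi_1$ is constant and, by the normalisation, $\varphi_0=\varphi_1$.

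\emph{Main obstacle.} The heart of the matter is the convexity and especially the \emph{rigidity} (equality case) of $F_{V,\lambda}$, and the affineness of $M_{V,\lambda}$ along automorphism orbits, in this twisted and singular setting: one must run Berndtsson's subharmonicity argument with the weight $e^{\theta_M+V(\varphi)}$ and the singular adapted measure $\mu$ on the resolution $M$, control the merely Lipschitz weak geodesics, justify differentiation of $E_V$ and of $\log\int_M e^{-r(\lambda)\varphi}d\mu$ along them, and identify the equality case with a holomorphic vector field in the centraliser of $V$. These ingredients are essentially available in \cite{BN14} and \cite{BBGZ13}, so the remaining work is their adaptation to the $\mathbb{Q}$--Fano, $V$--twisted situation.
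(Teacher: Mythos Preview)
Your route is genuinely different from the paper's. For (1) the paper does \emph{not} argue from properness directly: it first uses Theorem \ref{T3.1} to produce a soliton $\omega$, then for a one-parameter subgroup $\gamma\subset\Aut^0(M,V,\omega_0)$ it observes that each $\gamma(s)^*\omega$ is again a twisted soliton for the \emph{same} twist (this is exactly where preservation of $\omega_0$ is used), sets up the potentials $\varphi^s$ as a geodesic over $\mathbb{C}$, and shows that the auxiliary linear functional $F^0_V(\varphi)=-E_V(\varphi)+\int\varphi\,d\mu_0$ is subharmonic, bounded, hence constant in $s$; the Claim then forces $e^{\theta_M+V(\varphi^s)}\omega_{\varphi^s}^n=\mu_0$ for all $s$, and Theorem 2.18 of \cite{BN14} gives $\varphi^s=\varphi^0+C_s$, so $\gamma$ acts by isometries of $\omega$. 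For uniqueness in (2) the paper simply invokes Proposition 23 of \cite{DS}. What this buys is that (1) needs only the convexity/affineness of $E_V$ from \cite{BN14} and the uniqueness of the linear Monge--Amp\`ere problem, not any Berndtsson-type rigidity.

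Your argument for (1) has a real gap. You correctly note that $X\in\mathrm{Lie}\,\Aut^0(M,V,\omega_0)$ fixes $\omega_0$ and so gives a trivial ray, and pass to $JX$; but now the flow of $JX$ no longer preserves $\omega_0$, and your assertion that $M_{V,\lambda}(\varphi_t)$ is \emph{affine} along that orbit is precisely what must be proved. In the untwisted case this is Futaki's metric-independence; in the twisted case the reference form $\omega_0$ enters both through the $-r(\lambda)(I_V-J_V)$ term and through the measure $\mu$ (via $\Ric(\mu)=\omega_0$), and since $JX$ moves $\omega_0$ one has to check that the derivative of $M_{V,\lambda}$ along the orbit is constant (equal to the twisted Futaki invariant). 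This is essentially established in \cite{DS} for smooth Fano manifolds, but it is not proved in the present paper and, on the resolution $M$ where $\omega_0=\pi^*\alpha_0$ is only semipositive, it needs additional justification. The paper's approach sidesteps this entirely by pulling back the \emph{soliton} (not $\omega_0$) and by working with elements that preserve the twist. Your plan for uniqueness in (2) via geodesic convexity and Berndtsson rigidity is reasonable, and you rightly identify the rigidity step in this $V$-twisted, $\mathbb{Q}$-Fano setting as the crux; the paper avoids that work by citing \cite{DS}.
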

\begin{proof}
(2) is the direct corollary of (1), Proposition 23 \cite{DS} and Theorem 3.1. Let us prove (1), we follow the argument of \cite{BBEGZ}. There exists a twisted K\"{a}hler-Ricci soliton $\omega$ by Theorem \ref{T3.1}. Let $\gamma$ be a $1$-parameter subgroup of $Aut^0(M,V,\omega_0)$ and observing that $\gamma(s)^*\omega$ is also a twisted K\"{a}hler-Ricci soliton for each $s\in \mathbb{C}$. We assume that $\phi$ is a metric on $\pi^*K_M^{-1}$ with curvature $\omega$ and set $\varphi^s:=\gamma(s)^*\phi-\phi_0$ where $\phi_0$ is a metric with curvature $\omega_0$, then $\varphi(x,s):=\varphi^s(x)$ is a $\Phi^*\omega_0$-psh function on $M\times \mathbb{C}$ such that
$$
(\Phi^*\omega_0+\sqrt{-1}\partial\bar{\partial}\varphi)^{n+1}=0,
$$
where $\Phi$ is a projection from $M\times \mathbb{C}$ to $M$. By Proposition 2.17 \cite{BN14}, $E_V(\varphi^s)$ is harmonic on $\mathbb{C}$, while $\int_M(\varphi^s-\varphi^0)e^{\theta_M+V(\varphi^0)}\omega^n_{\varphi^0}$ is subharmonic since $s\mapsto \varphi^s(x)$ is subharmonic for each $x\in M$. It follows that
$$
E_V(\varphi^0)-E_V(\varphi^s)+\int_M(\varphi^s-\varphi^0)e^{\theta_M+V(\varphi^0)}\omega^n_{\varphi^0}
$$
is subharmonic and bounded on $\mathbb{C}$, hence it is vanishing.

Set
$$
\mu_0=\frac{e^{-r(\lambda)\varphi^0}\mu}{\int_Me^{-r(\lambda)\varphi^0}d\mu} \ \ \textrm{and} \ \
F^0_V(\varphi)=-E_V(\varphi)+\int_M\varphi d\mu_0 \ \ \textrm{for}\ \ \varphi\in \mathcal{E}^1_V(M,\omega_0).
$$
\begin{claim}
Given $\varphi\in \mathcal{E}^1_V(M,\omega_0)$, we have
$$
F^0_V(\varphi)=\inf_{\mathcal{E}^1_V(M,\omega_0)}F_V^0\ \ \textrm{if and only if}\ \ \mu_0=e^{\theta_M+V(\varphi)}\omega_\varphi^n.
$$
\end{claim}
\begin{proof}
If $\mu_0=e^{\theta_M+V(\varphi)}\omega_\varphi^n$, then by the concavity of $E_V$, we have
$$
E_V(\varphi)-\int_M\varphi d\mu_0\geq E_V(\psi)-\int_M\psi d\mu_0
$$
for any $\psi\in \mathcal{E}^1_V(M,\omega_0)$. It follows that
$$
F^0_V(\varphi)=\inf_{\mathcal{E}^1_V(M,\omega_0)}F_V^0.
$$

Conversely we assume that $\varphi$ is the minimizer of $F_V^0$ and consider
$$
g(t):=-E_V(P(\varphi+tw))+\int_M(\varphi+tw)d \mu_0
$$
where $w$ is a non-negative $T$-invariant continuous function. The argument of Theorem \ref{T3.1} implies that
$$
\frac{d}{dt}g(t)\big|_{t=0}=-\int_M we^{\theta_M+V(\varphi)}\omega^n_\varphi+\int_Mwd\mu_0
$$
Since $P(\varphi+tw)\leq \varphi+tw$, we see
$$
g(0)\leq -E_V(P(\varphi+tw))+\int_MP(\varphi+tw)d\mu_0\leq g(t)
$$
which gives
$$
\int_Mwe^{\theta_M+V(\varphi)}\omega_\varphi^n=\int_Mwd\mu_0
$$
\end{proof}

By this claim we know that
$$
e^{\theta_M+V(\varphi^s)}\omega_{\varphi^s}^n=\frac{e^{-r(\lambda)\varphi^0}\mu}{\int_Me^{-r(\lambda)\varphi^0}d\mu}.
$$
According to Theorem 2.18 \cite{BN14}, $\varphi^s=\varphi^0+C_s$ where $C_s$ is a constant dependent of $s$. Hence $\gamma(s)^*\omega=\omega$. The automorphism subgroup $Aut^0(M,V,\omega_0)$ is contained in the compact group of isometries of $\omega$ and hence it is trivial.
\end{proof}
\section{$L^\infty$-bound on the potentials}
Let $\pi:\mathcal{M}\rightarrow \Delta$ be the flat family as section 1. In this section we concern two arguments. One is the existence of twisted K\"{a}hler-Ricci solitons on $M_t$ when $|t|$ and $r(\lambda)=1-(1-\lambda)m$ are sufficiently small. The other is $L^\infty$-estimate (relatively to the ambient Fubini-Study metric) for the potentials of twisted K\"{a}hler-Ricci solitons (if exist) when $|t|$ is small enough. To begin with, we recall some concepts in K\"{a}hler geometry which will be used in this section.

Let $M$ be a smooth Fano manifold, $V$ be a holomorphic vector field belonging to a reductive Lie subalgebra and $T$ be the compact group induced by Im$V$. $M$ is embedded into $\mathbb{CP}^N$ by using the $T$-invariant sections of $K_M^{-m}$ and $\omega_{FS}$ is the Fubini-Study metric. We denote $\omega$ by $\frac{1}{m}\omega_{FS}$ and choose a smooth volume form $\Omega$ such that $\Ric(\Omega)=\omega$ (i.e. $\Ric(\omega)=\omega+\sqrt{-1}\partial\bar{\partial}h$ by the relation $\Omega=e^h\omega^m$). $\theta_M$ is a Hamiltonian function on $M$ defined by $L_V\omega=\sqrt{-1}\partial\bar{\partial}\theta_M$ and $\int_M e^{\theta_M}\omega^n=1$.

The twisted K\"{a}hler-Ricci soliton $\omega_\phi=\omega+\sqrt{-1}\partial\bar{\partial}\phi$ on $M$ is defined as the following equation
$$
\Ric(\omega_\phi)-L_V\omega_\phi=(1-\lambda)\omega_{FS}+r(\lambda)\omega_\phi
$$
where $r(\lambda)=1-(1-\lambda)m$, which is equivalent to the complex Monge-Amp\`{e}re equation
$$
e^{\theta_M+V(\phi)}\omega^n_\phi=\frac{e^{-r(\lambda)\phi}\Omega}{\int_Me^{-r(\lambda)\phi}\Omega}.
$$
Denote $\PSH(M,\omega)$ by the space of $\omega$-psh functions on $M$. For the convenience, we give the various functionals on smooth Fano manifold $M$ as section 2 and 3 (see \cite{TZ00} for a collection of them).
\begin{definition}
For $\phi\in C^\infty(M)\cap \PSH(M,\omega)^T$, we define
\begin{align*}
& I_V(\phi)=\int_M\phi(e^{\theta_M}\omega^n-e^{\theta_M+V(\phi)}\omega_\phi^n),\\
& E_V(\phi)=\int_0^1\int_M\phi e^{\theta_M+s\cdot V(\phi)}\omega^n_{s\phi}\wedge ds,\\
& J_V(\phi)=\int_M\phi e^{\theta_M}\omega^n-E_V(\phi),\\
& F_{V,\lambda}(\phi)=-r(\lambda)\cdot E_V(\phi)-\log\int_M e^{-r(\lambda)\phi}\Omega,\\
& M_{V,\lambda}(\phi)=-r(\lambda)\cdot (I_V(\phi)-J_V(\phi))+\int_M\log\frac{e^{\theta_M+V(\phi)}\omega_\phi^n}{\Omega}e^{\theta_M+V(\phi)}\omega_\phi^n.
\end{align*}
\end{definition}
\begin{remark}\label{r4.1}
Lemma \ref{L3.1} implies $M_{V,\lambda}(\phi)\geq F_{V,\lambda}(\phi)$.
\end{remark}

From now on, we return to the setting of Theorem \ref{T1.1}. Namely, we consider a $\mathbb{Q}$-Gorestein smoothing $\mathcal{M}$ of a $\mathbb{Q}$-Fano variety $M_0$. The notations $V_t$, $T_t,\omega_{FS,t}$, $\omega_t$, $\Omega_t$, $h_t$, $\theta_{M_t}$, $I_{V_t}(\phi_t)$, $E_{V_t}(\phi_t)$, $J_{V_t}(\phi_t)$, $F_{V_t,\lambda}(\phi_t)$ and $M_{V_t,\lambda}(\phi_t)$
on $M_t$ for $t\neq 0$ represent the same meaning as above.
\subsection{Existence of the twisted K\"{a}hler-Ricci solitons with small $r(\lambda)$}
Denote $S$ by the singular set of the central fiber $M_0$ of the flat family $\mathcal{M}$. Let $G:(M_0\backslash S)\times\Delta\rightarrow \mathcal{M}$ be a smooth embedding such that $G_t(M_0\backslash S):=G((M_0\backslash S)\times \{t\})\subset M_t$ and $G_0:M_0\backslash S\rightarrow M_0\backslash S$ is the identity map. We have the following lemma.
\begin{lemma}\label{L4.1}
$\theta_{M_t}\circ G_t$ smoothly converge to $\theta_{M_0}$. In particular, $\lim_{t\rightarrow 0}\beta_{M_t}=\beta_{M_0}$ and $\lim_{t\rightarrow 0}\alpha_{M_t}=\alpha_{M_0}$, where $\beta_{M_t}$ and $\alpha_{M_t}$ denote the maximum and minimum of $\theta_{M_t}$.
\end{lemma}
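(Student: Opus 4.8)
The plan is to reduce the convergence of the Hamiltonian functions to the already-understood convergence of the ambient geometric data on the smoothing family $\mathcal M \hookrightarrow \Delta \times \mathbb{CP}^N$. First I would recall that $\omega_t = \tfrac{1}{m}\omega_{FS}|_{M_t}$ is the restriction of a fixed K\"ahler form on $\Delta \times \mathbb{CP}^N$, and that the holomorphic vector field $\mathcal V$ is globally defined on $\mathcal M$, tangent to the fibers, and $\mathcal T$-invariant; hence its restriction $V_t = \mathcal V|_{M_t}$ depends smoothly on $t$ in the sense that $G_t^* V_t$ is a smooth family of vector fields on $M_0 \setminus S$, converging in $C^\infty_{loc}$ to $V_0$ as $t \to 0$. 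Likewise $G_t^*\omega_t \to \omega_0$ in $C^\infty_{loc}(M_0\setminus S)$, since pulling back the fixed Fubini-Study form along the smoothly varying embedding $G_t$ is a smooth operation in $t$.

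Next I would fix the normalization. The Hamiltonian $\theta_{M_t}$ is determined by $L_{V_t}\omega_t = \sqrt{-1}\partial\bar\partial \theta_{M_t}$ together with the normalization $\int_{M_t} e^{\theta_{M_t}}\omega_t^n = 1$ (equivalently one may normalize $\int_{M_t}\theta_{M_t}e^{\theta_{M_t}}\omega_t^n$, but the excerpt uses the former). On the regular part, $L_{V_t}\omega_t = d(\iota_{V_t}\omega_t) = \sqrt{-1}\partial\bar\partial\theta_{M_t}$ determines $\theta_{M_t}\circ G_t$ up to an additive constant from the $C^\infty_{loc}$-convergent data $G_t^*V_t$ and $G_t^*\omega_t$; the additive constant is then pinned down by the integral normalization. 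So the core of the argument is: (i) $\sqrt{-1}\partial\bar\partial(\theta_{M_t}\circ G_t) \to \sqrt{-1}\partial\bar\partial\theta_{M_0}$ in $C^\infty_{loc}$, hence $\theta_{M_t}\circ G_t \to \theta_{M_0} + c$ in $C^\infty_{loc}$ for some constant $c$; (ii) the constant $c$ vanishes because of the normalization, which requires controlling the mass of $e^{\theta_{M_t}}\omega_t^n$ near the singular set uniformly in $t$ — this is where one invokes that the $\theta_{M_t}$ are uniformly bounded (by Lemma \ref{L2.1}, or rather its smooth predecessors: Lemma 5.1 and Corollary 5.3 of \cite{Zhu00}), so $e^{\theta_{M_t}}\omega_t^n$ has no escaping mass and the normalization passes to the limit. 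Once $\theta_{M_t}\circ G_t \to \theta_{M_0}$ in $C^\infty_{loc}(M_0\setminus S)$, the statements $\lim_{t\to 0}\beta_{M_t} = \beta_{M_0}$ and $\lim_{t\to 0}\alpha_{M_t} = \alpha_{M_0}$ for the sup and inf follow, again using the uniform $L^\infty$-bound to rule out the extrema migrating into a neighborhood of $S$: on a fixed compact exhaustion $K_\ell \subset M_0\setminus S$ the sup and inf converge by uniform convergence, and the complement $M_t\setminus G_t(K_\ell)$ contributes negligibly for $\ell$ large because of the uniform bound combined with $\int_{M_t\setminus G_t(K_\ell)}\omega_t^n \to 0$.

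The main obstacle I expect is step (ii): controlling the behavior near the singular set $S$ and showing the normalization constant really does go to zero. The geometric convergence $G_t^*\omega_t \to \omega_0$ is only $C^\infty_{loc}$ away from $S$, so one cannot integrate naively over all of $M_t$; one must argue that the mass of $\omega_t^n$ (and of $e^{\theta_{M_t}}\omega_t^n$, using $|\theta_{M_t}| \le C$ uniformly) concentrated near $G_t(S)$ tends to zero uniformly in $t$ as the neighborhood shrinks. This follows because $[\omega_t^n] = [\omega_0^n]$ is constant in cohomology (the family is flat with relatively ample $K_{\mathcal M/\Delta}^{-1}$, so $\int_{M_t}\omega_t^n = (\tfrac{1}{m})^n\deg$ is fixed), while $\int_{G_t(K_\ell)}\omega_t^n \to \int_{K_\ell}\omega_0^n \uparrow \int_{M_0}\omega_0^n$; hence the residual mass is uniformly small. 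Everything else is routine once this uniform-mass statement is in hand.
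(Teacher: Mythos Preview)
Your approach is essentially the paper's: pull back the defining relation $L_{V_t}\omega_t=\sqrt{-1}\partial\bar\partial\theta_{M_t}$ via $G_t$ and use the $C^\infty_{\mathrm{loc}}$ convergence of $G_t^*\omega_t$, $(G_t^{-1})_*V_t$ and $G_t^*J_t$ on $M_0\setminus S$ to force $\theta_{M_t}\circ G_t\to\theta_{M_0}$. You are more thorough than the paper in tracking the additive constant through the normalization $\int_{M_t}e^{\theta_{M_t}}\omega_t^n=1$; the mass argument you sketch for showing $c_t\to 0$ (constant total volume, no escape of $\omega_t^n$-mass near $S$, uniform bound on $\theta_{M_t}$) is correct and fills a point the paper glosses over.

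There is, however, a genuine gap in your argument for $\beta_{M_t}\to\beta_{M_0}$ and $\alpha_{M_t}\to\alpha_{M_0}$. The claim that ``the complement $M_t\setminus G_t(K_\ell)$ contributes negligibly \dots\ because of the uniform bound combined with $\int_{M_t\setminus G_t(K_\ell)}\omega_t^n\to 0$'' does not control $\sup$ or $\inf$: a uniformly bounded function can achieve its maximum on a set of arbitrarily small volume. So your argument yields only one inequality (e.g.\ $\liminf_{t\to 0}\beta_{M_t}\ge\beta_{M_0}$ via uniform convergence on compacta), not the reverse. The clean fix is to exploit the ambient picture rather than volume: since the embedding $\mathcal M\hookrightarrow\Delta\times\mathbb{CP}^N$ uses $\mathcal T$-equivariant sections, the torus action on each fiber is the restriction of a fixed linear action on $\mathbb{CP}^N$, so $\theta_{M_t}=\Theta|_{M_t}+c_t$ for a single smooth ambient Hamiltonian $\Theta$ on $\mathbb{CP}^N$ and your normalization constant $c_t$. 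Then $\sup_{M_t}\theta_{M_t}=c_t+\sup_{M_t}\Theta$ and likewise for the infimum; continuity of $\Theta$ on $\mathbb{CP}^N$ together with the Hausdorff convergence $M_t\to M_0$ gives $\sup_{M_t}\Theta\to\sup_{M_0}\Theta$, and you have already shown $c_t\to c_0$.
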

\begin{proof}
The smooth embedding map $G$ satisfies that $G_t^*\omega_{FS,t}$ $C^\infty$-converges to $\omega_{FS,0}$ ($G_t^*\omega_{t}$ $C^\infty$-converges to $\omega_{0}$), $(G_t^{-1})_*V_t$ $C^\infty$-converges to $V_0$ and $G_t^*J_t$ $C^\infty$-converges to $J_0$, where $J_t$ denotes the complex structure of $M_t$. We also have
$$
L_{(G_t^{-1})_*V_t}G_t^*\omega_t=-d(G_t^*J_t)d(\theta_{M_t}\circ G_t)
$$
due to $L_{V_t}\omega_t=\sqrt{-1}\partial\bar{\partial}\theta_{M_t}$. Furthermore, $\theta_{M_t}\circ G_t$ smoothly converges to $\theta_{M_0}$ on $M_0\backslash S$. The second argument is deduced from the first.
\end{proof}
Next we concern the uniform lower bound of the $\alpha$-invariant $\alpha_{\Omega_t}(\omega_t)$ on $M_t$ for $t\in\Delta^*:=\Delta\backslash \{0\}$.
\begin{lemma}\label{L4.2}
There exists a positive constant $l$ only dependent of the upper bound of $\Vol(M_t)$ such that $\alpha_{\Omega_t}(\omega_t)>l$, where $\Vol(M_t)$ denotes the volume of $M_t$.
\end{lemma}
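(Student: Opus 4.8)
The plan is to reduce the uniform lower bound to two inputs: (i) a uniform Skoda-type integrability estimate for quasi-plurisubharmonic functions on the subvarieties $M_t\subset\mathbb{CP}^N$, and (ii) a uniform $L^p$-bound, $p>1$, on the density of $\Omega_t$ with respect to the ambient volume.

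First, two harmless normalisations. The $\alpha$-invariant $\alpha_\mu(\omega)$ depends only on the homothety class of $\mu$, and $\omega_t=\tfrac1m\omega_{FS}$ gives $\varphi\in\PSH(M_t,\omega_t)\Leftrightarrow m\varphi\in\PSH(M_t,\omega_{FS})$, so $\alpha_{\Omega_t}(\omega_t)=m\,\alpha_{\mu_t}(\omega_{FS})$ for the positive multiple $\mu_t$ of $\Omega_t$ normalised by $\int_{M_t}\mu_t=\int_{M_t}\omega_{FS}^n|_{M_t}=:d$; here $d=m^n\Vol(M_t)$ is the degree of $M_t$ in $\mathbb{CP}^N$. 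Thus it is enough to find $l'>0$ and $C$ depending only on $N,n,d$ such that
$$
\int_{M_t}e^{-l'\psi}\,\mu_t\le C\quad\text{for all }t\in\Delta^*\text{ and all }\psi\in\PSH(M_t,\omega_{FS})\text{ with }\sup_{M_t}\psi=0 ,
$$
and then take $l=ml'$; note any such $\psi$ satisfies $\psi\le 0$ on $M_t$.

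For (i), I would fix once and for all a finite cover of $\mathbb{CP}^N$ by coordinate balls with local potentials $\rho_j$ for $\omega_{FS}$; on $U_j\cap M_t$ the function $\psi+\rho_j$ is plurisubharmonic and $\le\sup_{U_j}\rho_j$, and the local Skoda/Demailly--Koll\'ar integrability theorem applied to a plurisubharmonic function restricted to an $n$-dimensional analytic subset of degree $\le d$ (for instance, project that subset to $\mathbb{C}^n$ as a branched cover of degree $\le d$ and invoke the classical Skoda estimate on $\mathbb{C}^n$) yields, after summing over $j$, constants $\tau_0=\tau_0(N,n,d)>0$ and $C_0$ with $\int_{M_t}e^{-\tau_0\psi}\,\omega_{FS}^n|_{M_t}\le C_0$. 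For (ii), since $M_0$ is $\mathbb{Q}$-Fano, hence klt, the resolution identity $K_M=\pi^*K_{M_0}+\sum_i a_iE_i-\sum_j b_jF_j$ with $b_j<1$, combined with flatness of $\pi$ (after possibly shrinking $\Delta$), shows that the density $g_t:=\mu_t/(\omega_{FS}^n|_{M_t})$, which is smooth and positive on each smooth fibre and degenerates near the singular set $S$ of $M_0$ only at a klt-integrable rate, is bounded in $L^p(M_t,\omega_{FS}^n|_{M_t})$ uniformly in $t\in\Delta^*$ for some fixed $p>1$ --- the standard uniform integrability of relative K\"ahler--Einstein volume forms along a smoothing, cf.\ \cite{SSY}, \cite{EGZ}, \cite{BEGZ10}, \cite{BBEGZ}. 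Then H\"older's inequality with $q=\tfrac{p}{p-1}$ and $\psi\le0$ gives, for $l'=\tau_0/q$,
$$
\int_{M_t}e^{-l'\psi}\,\mu_t\le\|g_t\|_{L^p(\omega_{FS}^n|_{M_t})}\Big(\int_{M_t}e^{-\tau_0\psi}\,\omega_{FS}^n|_{M_t}\Big)^{1/q}\le C_1\,C_0^{1/q},
$$
whence $\alpha_{\Omega_t}(\omega_t)\ge m\,\tau_0\,(1-\tfrac1p)>0$, a bound that, among the geometry of the fibres, depends on $\Vol(M_t)=d/m^n$ alone (shrink it slightly for the strict inequality).

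I expect the main obstacle to be the two \emph{uniformity} assertions as $t\to 0$: the Skoda constants $\tau_0,C_0$ must not degenerate when $M_t$ gets pinched near $S$, which forces one to use the bounded degree together with a cover and slicing adapted to the family $\mathcal{M}\to\Delta$; and the $L^p$-bound on $g_t$ must survive the limit, which is exactly where the klt condition on $M_0$ and the flatness/semicontinuity input for the fibrewise volume forms are needed. Granting these, the rest is routine.
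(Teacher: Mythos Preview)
Your outline is correct and is essentially the argument behind the result the paper cites: the paper's own ``proof'' consists of a single sentence deferring to Proposition~2.8 of \cite{SSY}, and what you have sketched --- a uniform Skoda-type bound on $\omega_{FS}$-psh functions on subvarieties of bounded degree, combined via H\"older with a uniform $L^p$-bound on the relative volume density coming from the klt condition on $M_0$ and flatness of the family --- is precisely the content of that proposition. So there is no genuinely different route here; you have simply unpacked the black-box citation, and the two ``uniformity'' points you flag as the main obstacles are exactly the ones handled in \cite{SSY} (with the Skoda part going back to Tian's work and its refinements, and the $L^p$ part to \cite{EGZ}, \cite{BBEGZ}).
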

\begin{proof}
Proposition 2.8 \cite{SSY} implies this argument.
\end{proof}
According to Lemma \ref{L4.1}, Lemma \ref{L4.2}, Lemma \ref{L3.2} and Theorem \ref{T3.2}, we have
\begin{proposition}\label{P4.1}
There exists a number $\underline{\lambda}$ such that for any $\lambda\in (1-m^{-1},\underline{\lambda}]$ and $t\in \Delta$, $M_t$ has a unique twisted K\"{a}hler-Ricci soliton.
\end{proposition}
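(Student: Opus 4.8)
The plan is to obtain Proposition~\ref{P4.1} by combining the uniform estimates already developed with the abstract existence criterion of Theorem~\ref{T3.2}. First I would note that Proposition~\ref{P2.2} and Lemma~\ref{L3.2} reduce everything to a comparison between the $\alpha$-invariant $\alpha_{\Omega_t}(\omega_t)$ and the quantity $r(\lambda)\cdot\frac{n+\beta_{M_t}-\alpha_{M_t}}{n+1+\beta_{M_t}-\alpha_{M_t}}$: whenever the former strictly dominates the latter, $M_{V_t,\lambda}$ is proper, hence (by Theorem~\ref{T3.2}, applied fiberwise on each smooth $M_t$ for $t\neq 0$, and via the pluripotential formulation of Sections~2--3 on the central fiber $M_0$) the fiber $M_t$ carries a unique twisted K\"ahler-Ricci soliton.

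Next I would make the two sides uniform in $t$. For the left side, Lemma~\ref{L4.2} gives a constant $l>0$, depending only on an upper bound for $\Vol(M_t)$, with $\alpha_{\Omega_t}(\omega_t)>l$ for all $t\in\Delta^*$; since the volumes $\Vol(M_t)=\int_{M_t}\omega_t^n$ are constant in a flat family (they are topological, equal to $(2\pi)^n c_1(M_t)^n/n!$ up to the fixed scaling $\tfrac1m\omega_{FS}$), this bound is genuinely $t$-independent, and one checks it persists at $t=0$ by the same argument of \cite{SSY}. For the right side, Lemma~\ref{L4.1} shows $\beta_{M_t}-\alpha_{M_t}\to \beta_{M_0}-\alpha_{M_0}$ as $t\to 0$, so by possibly shrinking $\Delta$ the expression $\frac{n+\beta_{M_t}-\alpha_{M_t}}{n+1+\beta_{M_t}-\alpha_{M_t}}$ stays bounded above by some uniform constant $A<1$ for all $t\in\Delta$. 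Then I would simply choose $\underline{\lambda}\in(1-m^{-1},1]$ close enough to $1-m^{-1}$ that $r(\underline{\lambda})=1-(1-\underline{\lambda})m<l/A$; since $r$ is monotone and $r(1-m^{-1})=0$, such a choice exists, and for every $\lambda\in(1-m^{-1},\underline{\lambda}]$ we get $r(\lambda)\cdot A< l <\alpha_{\Omega_t}(\omega_t)$ on all of $\Delta$, which is exactly the properness hypothesis.

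Finally I would assemble the pieces: for $t\neq 0$ invoke Theorem~\ref{T3.2}(2) to produce a unique twisted K\"ahler-Ricci soliton on the smooth Fano $M_t$, and for $t=0$ run the variational argument of Section~3 (Theorem~\ref{T3.1} for existence together with Theorem~\ref{T3.2}(1) for uniqueness) on the resolution $M\to M_0$, using that $\alpha_\mu(\omega_0)>r(\lambda)\cdot\frac{n+\beta_{M_0}-\alpha_{M_0}}{n+1+\beta_{M_0}-\alpha_{M_0}}$ by the same numerical choice. I expect the only real point requiring care is the \emph{uniformity} of the $\alpha$-invariant bound across the whole family including the singular central fiber --- that is, making sure the constant $l$ from Lemma~\ref{L4.2} does not degenerate as $t\to 0$ and applies on $M_0$ itself; the rest is a matter of bookkeeping with the monotonicity of $r(\lambda)$ and the continuity statement of Lemma~\ref{L4.1}.
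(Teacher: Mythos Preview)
Your proposal is correct and follows essentially the same approach as the paper, which simply cites Lemma~\ref{L4.1}, Lemma~\ref{L4.2}, Lemma~\ref{L3.2} and Theorem~\ref{T3.2} without further elaboration. Your expansion of the argument---making the choice of $\underline{\lambda}$ explicit via the monotonicity of $r(\lambda)$ and flagging the need to treat the central fiber through the pluripotential framework of Sections~2--3 (with Remark~\ref{r3.2} supplying $\alpha_\mu(\omega_0)>0$)---fills in exactly the details the paper leaves implicit.
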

Note that on each $M_t$ $(t\neq 0)$, we obtain the existence and uniqueness of the twisted K\"{a}hler-Ricci soliton in the sense of definition \ref{D3.1} when $r(\lambda)$ is small enough, but we do not know the regularity about this solution. So we need the following proposition.
\begin{proposition}
Assume that $\omega_\phi=\omega+\sqrt{-1}\partial\bar{\partial}\phi$ is the twisted K\"{a}hler-Ricci soliton in the sense of definition \ref{D3.1} on a smooth Fano manifold $M$, then $\phi$ is smooth.
\end{proposition}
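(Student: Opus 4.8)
The plan is to establish smoothness of $\phi$ by a bootstrap argument starting from the pluripotential-theoretic solution. First I would recall that on a smooth Fano manifold $M$, the measure $e^{\theta_M+V(\phi)}\omega_\phi^n = \mu_\phi := \frac{e^{-r(\lambda)\phi}\Omega}{\int_M e^{-r(\lambda)\phi}\Omega}$ has a density with respect to $\omega^n$ that is, a priori, only in $L^1$; but the key point is that the right-hand side has a bounded positive smooth reference form $\Omega$ twisted by $e^{-r(\lambda)\phi}$. By Kolodziej's $L^\infty$-estimate (or its versions for measures with $L^p$ densities, $p>1$, together with the uniform integrability coming from the $\alpha$-invariant estimate $\alpha_\Omega(\omega)>0$ from Remark \ref{r3.2}), one first upgrades $\phi$ from $\mathcal{E}^1_V(M,\omega)$ to $\phi\in L^\infty(M)\cap C^0(M)$. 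Indeed, since $\phi$ has full Monge-Amp\`ere mass and $e^{-r(\lambda)\phi}$ is integrable against $\Omega$ to all orders $< \alpha_\Omega(\omega)/r(\lambda)$, the density of $\mu_\phi$ with respect to $\omega^n$ lies in $L^p$ for some $p>1$, and Kolodziej's theorem gives $\phi\in C^0(M)$ with a uniform modulus of continuity.

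Once $\phi$ is continuous and bounded, the density $f := e^{-r(\lambda)\phi}/\bigl(e^{\theta_M+V(\phi)}\int_M e^{-r(\lambda)\phi}\Omega\bigr)$ — so that $\omega_\phi^n = f\,\Omega$ after absorbing $e^{\theta_M}$ — needs to be understood. The subtlety is the presence of $V(\phi)$ in the exponent: since $\phi$ is $\omega$-psh and $T$-invariant, Lemma \ref{L2.1} (applied on the smooth $M$) already gives $|V(\phi)|\le C$, and moreover $V(\phi)$ is continuous once $\phi$ is $C^1$. So I would proceed by the Evans-Krylov / Aubin-Yau machinery adapted to the soliton-type equation. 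Rewrite the equation as a complex Monge-Amp\`ere equation $\omega_\phi^n = F(x,\phi,\nabla\phi)\,\omega^n$ where $F = e^{-r(\lambda)\phi - \theta_M - V(\phi)}\cdot(\text{normalizing constant})\cdot e^h$ and $e^h\omega^n=\Omega$; here the $\nabla\phi$-dependence enters only through the first-order term $V(\phi)=\langle V,\nabla\phi\rangle$, which is a linear, hence tame, dependence. Standard $C^0\Rightarrow C^{2,\alpha}$ regularity for complex Monge-Amp\`ere equations with right-hand side depending on $x,\phi$ and first derivatives of $\phi$ (via the continuity method or viscosity/pluripotential arguments, cf. the regularity theory used in Berman-Boucksom-Guedj-Zeriahi and Sz\'ekelyhidi's book) then yields $\phi\in C^{2,\alpha}$; in particular $\omega_\phi$ is a genuine K\"ahler metric, uniformly equivalent to $\omega$.

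The main technical step, and the one I expect to require the most care, is the passage from the weak (pluripotential) solution to a solution with bounded Laplacian: one needs an a priori $C^2$-estimate $\Delta_\omega\phi \le C$. The standard approach is the Aubin-Yau second-order estimate, differentiating the equation twice and using the lower bound on the bisectional curvature of $\omega$; the twist term $(1-\lambda)\omega_{FS}=(1-\lambda)m\,\omega$ is nonnegative and actually helps, while the Lie-derivative term $L_V\omega_\phi$ contributes terms controlled by the bound on $V$ and its covariant derivatives (which are fixed data depending only on $\omega$ and $V$, by reductivity), exactly as in the Tian-Zhu theory of K\"ahler-Ricci solitons. One subtlety: to run this estimate rigorously one should first regularize — approximate $\phi$ by the solutions $\phi_\epsilon$ of the perturbed equations with $\omega$ replaced by $\omega+\epsilon\chi$ for a Kähler form $\chi$, obtain uniform $C^2$ bounds, and pass to the limit. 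After $C^{2,\alpha}$ is in hand, the density $F$ becomes $C^\alpha$ (since $\phi, V(\phi)\in C^{1,\alpha}$), linear elliptic Schauder theory applied to the linearized Monge-Amp\`ere operator bootstraps $\phi$ to $C^\infty$, completing the proof.
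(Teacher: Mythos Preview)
Your proposal is correct and follows essentially the same route as the paper: $L^p$-integrability of $e^{-r(\lambda)\phi}$ via the $\alpha$-invariant (the paper invokes Proposition~1.4 of \cite{BBEGZ}, which in fact gives $L^p$ for \emph{all} $p\ge1$), boundedness of $V(\phi)$ (the paper cites Zhu's Lemma~5.1 and Corollary~5.3 directly), continuity of $\phi$ from Ko\l{}odziej/EGZ, then the soliton-type Laplacian estimate (the paper simply cites Zhu's Proposition~6.1), and finally Evans--Krylov plus Schauder bootstrap. The only substantive difference is that you spell out the regularization needed to apply the a priori $C^2$ estimate to a merely continuous solution, a point the paper leaves implicit in its citation of \cite{Zhu00}.
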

\begin{proof}
Proposition 1.4 \cite{BBEGZ} says that $e^{-r(\lambda)\phi}\in L^p(M)$ for all $p\geq 1$. Lemma 5.1 \cite{Zhu00} and Corollary 5.3 \cite{Zhu00} imply that $|V(\phi)|$ is bounded. So by \cite{EGZ}, $\phi$ is continuous on $M$. We can obtain the Laplacian estimate for $\phi$ according to Proposition 6.1 \cite{Zhu00}. By the standard elliptic regularity theory, $\phi$ is smooth.
\end{proof}
\subsection{Uniform lower bounds on Ding functional and Mabuchi functional}
We define $F'_{V_t,\lambda}$ and $M'_{V_t,\lambda}$ to be the infimum of the twisted Ding functional and Mabuchi functional on $M_t$ with base metric $\omega_t$. If the twisted K\"{a}hler-Ricci soliton $\omega_{\phi_{t,\lambda}}$ exists, then $F'_{V_t,\lambda}$ can be achieved at $\phi_{t,\lambda}$ (c.f. P1006 \cite{DS}). The goal of this subsection is to prove the following theorem.
\begin{theorem}\label{T4.1}
Suppose that for a fixed $\lambda\in(1-m^{-1},1)$, there are twisted K\"{a}hler-Ricci solitons $\omega_{\phi_{t,\lambda}}$ for all $t\in \Delta$. Then we have $\limsup_{t\rightarrow 0}F'_{V_t,\lambda}>-\infty$ and $\limsup_{t\rightarrow 0}M'_{V_t,\lambda}>-\infty$.
\end{theorem}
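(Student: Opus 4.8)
The plan is to turn the statement into a uniform lower bound for the twisted Ding functional \emph{at the soliton}, and then to extract that bound from the uniform $\alpha$-invariant of Lemma~\ref{L4.2} together with the affine dependence of the twisted Mabuchi functional on $\lambda$. First I would record the reductions. Since $\omega_{\phi_{t,\lambda}}$ is a twisted K\"ahler--Ricci soliton, $e^{\theta_{M_t}+V_t(\phi_{t,\lambda})}\omega^n_{\phi_{t,\lambda}}=\mu_{\phi_{t,\lambda}}$, so the extra term in Lemma~\ref{L3.1} vanishes and $F_{V_t,\lambda}(\phi_{t,\lambda})=M_{V_t,\lambda}(\phi_{t,\lambda})$; moreover $\phi_{t,\lambda}$ achieves the infimum of $F_{V_t,\lambda}$ (cf.\ P1006 \cite{DS}), so $F'_{V_t,\lambda}=F_{V_t,\lambda}(\phi_{t,\lambda})$, and combined with $M_{V_t,\lambda}\ge F_{V_t,\lambda}$ (Remark~\ref{r4.1}) this forces $M'_{V_t,\lambda}=F'_{V_t,\lambda}$. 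Hence it is enough to bound $M_{V_t,\lambda}(\phi_{t,\lambda})$ from below, uniformly for small $|t|$. Normalising $\sup_{M_t}\phi_{t,\lambda}=0$, the trivial inequality $F'_{V_t,\lambda}\le F_{V_t,\lambda}(0)=-\log\int_{M_t}\Omega_t$ and the uniform boundedness of $\int_{M_t}\Omega_t$ give $M_{V_t,\lambda}(\phi_{t,\lambda})\le C_1$; the content is the matching lower bound, and since $M_{V_t,\lambda}(\phi_{t,\lambda})=-r(\lambda)\big(I_{V_t}-J_{V_t}\big)(\phi_{t,\lambda})+(\text{entropy term})\ge-r(\lambda)\big(I_{V_t}-J_{V_t}\big)(\phi_{t,\lambda})-\log\int_{M_t}\Omega_t$, this reduces to a uniform upper bound on $I_{V_t}(\phi_{t,\lambda})$.

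To produce that, fix $\sigma$ with $0<\sigma<l$, where $l$ is the uniform lower bound for $\alpha_{\Omega_t}(\omega_t)$ from Lemma~\ref{L4.2}. Exactly as in the proof of Lemma~\ref{L3.2}, the estimate $\int_{M_t}e^{-\sigma\varphi}\Omega_t\le e^{C_\sigma}$ with $C_\sigma$ independent of $t$, Jensen's inequality, and $\sup\varphi=0$ give, for every $\varphi\in\mathcal{E}^1_{V_t}(M_t,\omega_t)$ with $\sup\varphi=0$,
\[
M_{V_t,\kappa}(\varphi)\ \ge\ \Big(\sigma-r(\kappa)\,\tfrac{n+\beta_{M_t}-\alpha_{M_t}}{\,n+1+\beta_{M_t}-\alpha_{M_t}\,}\Big)\,I_{V_t}(\varphi)-C_\sigma ,
\]
where by Lemma~\ref{L4.1} the displayed fraction is uniformly controlled as $t\to0$. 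For a fixed $\kappa<1-m^{-1}$ one has $r(\kappa)<0$, so $M_{V_t,\kappa}(\varphi)\ge\sigma I_{V_t}(\varphi)-C_\sigma$ uniformly; since $M_{V_t,\cdot}$ is affine in $r(\cdot)$, with $M_{V_t,\kappa}(\varphi)-M_{V_t,\lambda}(\varphi)=\big(r(\lambda)-r(\kappa)\big)\big(I_{V_t}-J_{V_t}\big)(\varphi)$, evaluating at $\varphi=\phi_{t,\lambda}$ and using $M_{V_t,\lambda}(\phi_{t,\lambda})\le C_1$ together with $I_{V_t}\le(n+1+\beta_{M_t}-\alpha_{M_t})\big(I_{V_t}-J_{V_t}\big)$ (Proposition~\ref{P2.2}, Lemma~\ref{L4.1}) is meant to yield a uniform bound $I_{V_t}(\phi_{t,\lambda})\le C_2$. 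Feeding this back, $M_{V_t,\lambda}(\phi_{t,\lambda})\ge-r(\lambda)\,C_2-\log\int_{M_t}\Omega_t\ge-C_3$ uniformly in $t$, so $\liminf_{t\to0}F'_{V_t,\lambda}\ge-C_3>-\infty$, and the same bound holds for $M'_{V_t,\lambda}$.

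The step I expect to be the main obstacle is exactly this last disentangling: the bound on $I_{V_t}(\phi_{t,\lambda})$ and the lower bound on $M_{V_t,\lambda}(\phi_{t,\lambda})$ feed into one another, and separating them forces the properness coefficient $\sigma-r(\kappa)\tfrac{n+\beta_{M_t}-\alpha_{M_t}}{n+1+\beta_{M_t}-\alpha_{M_t}}$ harvested near the easy end $\kappa=1-m^{-1}$ to \emph{strictly dominate}, uniformly in $t$, the affine slope picked up in moving the parameter out to $\lambda$. This is precisely where both family inputs are indispensable: Lemma~\ref{L4.2} to keep $l$ (hence $\sigma$) bounded below along the smoothing, and Lemma~\ref{L4.1} to keep $\alpha_{M_t},\beta_{M_t}$ (hence the two structural constants above) bounded; and it is where one must fix $C_\sigma,C_1,C_2,C_3$ \emph{before} letting $t\to0$. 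A minor point to check along the way is that $\phi_{t,\lambda}$ is smooth on the smooth fibre $M_t$, which is the content of the regularity proposition proved just above, so that $I_{V_t}(\phi_{t,\lambda})$ and $M_{V_t,\kappa}(\phi_{t,\lambda})$ are defined classically.
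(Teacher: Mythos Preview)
Your argument has a genuine gap at exactly the point you flag as ``the main obstacle'', and it cannot be closed by this route. Tracing your chain of inequalities at $\varphi=\phi_{t,\lambda}$ (with $c:=\tfrac{n+\beta_{M_t}-\alpha_{M_t}}{n+1+\beta_{M_t}-\alpha_{M_t}}$): from Lemma~\ref{L3.2} one has $M_{V_t,\kappa}(\phi_{t,\lambda})\ge(\sigma-r(\kappa)c)\,I_{V_t}(\phi_{t,\lambda})-C_\sigma$, while the affine identity and $I_V-J_V\le c\,I_V$ give $M_{V_t,\kappa}(\phi_{t,\lambda})\le C_1+(r(\lambda)-r(\kappa))\,c\,I_{V_t}(\phi_{t,\lambda})$. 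Combining, the $r(\kappa)$–terms cancel and you are left with $\big(\sigma-r(\lambda)c\big)\,I_{V_t}(\phi_{t,\lambda})\le C_1+C_\sigma$. Thus the auxiliary parameter $\kappa$ is illusory: the net coefficient is $\sigma-r(\lambda)c$, which is precisely the properness threshold of Lemma~\ref{L3.2} \emph{at the target level $\lambda$}. Since $\sigma<l$ with $l$ the uniform $\alpha$-invariant lower bound of Lemma~\ref{L4.2}, there is no reason whatsoever to expect $\sigma>r(\lambda)c$ once $\lambda$ is away from $1-m^{-1}$; for such $\lambda$ your inequality is vacuous and no bound on $I_{V_t}(\phi_{t,\lambda})$ follows. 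A further symptom that the strategy is off: you never use the hypothesis that a soliton exists on the \emph{central} fibre $M_0$, which is part of ``for all $t\in\Delta$''.

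The paper's proof is of a completely different nature and does not attempt to bound $I_{V_t}(\phi_{t,\lambda})$. It solves the homogeneous complex Monge--Amp\`ere Dirichlet problem \eqref{e4.1} on $\mathcal{M}_r$ with boundary data $\Psi=\phi_{t,\lambda}$ on $\partial\mathcal{M}_r$, and shows that $t\mapsto F_{V_t,\lambda}(\Phi_t)$ is continuous and \emph{subharmonic} on $\Delta_r$: the term $-\log\int_{M_t}e^{-r(\lambda)\Phi_t}\Omega_t$ via Berndtsson's positivity of direct images (Proposition~\ref{P4.3}), and $-r(\lambda)E_{V_t}(\Phi_t)$ by a direct computation using $\mathcal{W}_\Phi^{n+1}=0$ (Propositions~\ref{P4.4}--\ref{P4.5}). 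The maximum principle then gives $\sup_{\partial\Delta_r}F'_{V_t,\lambda}\ge F_{V_0,\lambda}(\Phi_0)\ge F'_{V_0,\lambda}$, and letting $r\to0$ yields $\limsup_{t\to0}F'_{V_t,\lambda}\ge F'_{V_0,\lambda}>-\infty$, the last bound holding because the soliton on $M_0$ exists by hypothesis. The statement for $M'_{V_t,\lambda}$ then follows from Remark~\ref{r4.1}. This use of the weak geodesic and of the central fibre is the missing idea in your proposal.
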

We first prove the statement about the twisted Ding functional. We follow the argument of \cite{SSY} and \cite{LS}. For $r\in (0,1)$, we denote $\mathcal{M}_r=\mathcal{M}|_{\Delta_r}\subset \mathbb{CP}^N\times\Delta_r$, where $\Delta_r$ is a disc with radius $r$ in $\mathbb{C}$. So $\mathcal{M}_r$ can be viewed as a complex analytic variety with smooth boundary, endowed with a natural K\"{a}hler metric $\mathcal{W}=\omega_t+\sqrt{-1}dt\wedge d\bar{t}$. Let $\omega_{\phi_{t,\lambda}}$ be the unique twisted K\"{a}hler-Ricci soliton on each $M_t$ for $\lambda\in (1-m^{-1},1)$. Define a function $\Psi(t,\cdot):=\phi_{t,\lambda}(\cdot)$ on $\mathcal{M}_r$. We consider the Dirichlet problem for the following homogeneous complex Monge-Amp\`{e}re equation
\begin{equation}\label{e4.1}
\left \{ \begin{array}{ll}
(\mathcal{W}+\sqrt{-1}\partial\bar{\partial}\Phi)^{n+1}=0,\\
\mathcal{W}+\sqrt{-1}\partial\bar{\partial}\Phi\geq 0,\\
\Phi|_{\partial\mathcal{M}_r}=\Psi.
\end{array}
\right.
\end{equation}
Proposition 2.7 \cite{B15} claims that $\Phi:=\sup\{\Phi'\in \PSH(\mathcal{M}_r,\mathcal{W})|\Phi'\leq\Psi \ \ \textrm{on}\ \ \partial\mathcal{M}_r\}$ is the unique solution of the equation (\ref{e4.1}). Note that $\Phi$ is $\mathcal{T}$-invariant by the same argument of Remark \ref{r3.3}, where $\mathcal{T}$ is the compact group induced by Im$\mathcal{V}$.

We need the following auxiliary lemma (c.f. Proposition 2.17 \cite{SSY} and \cite{PS}).
\begin{lemma}
The Dirichlet problem (\ref{e4.1}) has a unique solution which is bounded on $\mathcal{M}_r$ (i.e. $||\Phi||_{L^\infty}\leq C$) and locally $C^{1,\alpha}$ away from the singular set of $\mathcal{M}_r$
\end{lemma}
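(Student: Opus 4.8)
The plan is to establish the two assertions separately, treating the $L^{\infty}$-bound and the interior regularity as conceptually distinct. For the $L^{\infty}$-bound, I would use the envelope characterization $\Phi=\sup\{\Phi'\in\PSH(\mathcal{M}_r,\mathcal{W})\mid \Phi'\leq\Psi\text{ on }\partial\mathcal{M}_r\}$ together with a maximum-principle comparison: since $\Phi$ is obtained as a supremum of competitors bounded above by the boundary data, an upper bound $\Phi\leq \sup_{\partial\mathcal{M}_r}\Psi$ is immediate, and a lower bound follows by exhibiting a single admissible subsolution (for instance $\Phi'=-A$ for $A$ large, using that $\mathcal{W}$ is a genuine K\"ahler metric on $\mathcal{M}_r$, hence constants are $\mathcal{W}$-psh, and $-A\leq\Psi$ on $\partial\mathcal{M}_r$ once $A$ exceeds the boundary sup-norm). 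Thus it suffices to bound $\|\Psi\|_{L^{\infty}(\partial\mathcal{M}_r)}=\sup_{|t|=r}\|\phi_{t,\lambda}\|_{L^{\infty}(M_t)}$. This is where I expect to invoke the normalization of the twisted K\"ahler–Ricci soliton potentials (e.g. $\sup_{M_t}\phi_{t,\lambda}=0$ or the probability-measure normalization in Definition \ref{D3.1}) and the uniform estimates coming from the uniform lower bound on the $\alpha$-invariant in Lemma \ref{L4.2}: a standard $L^{\infty}$-estimate for complex Monge–Amp\`ere equations with right-hand side in $L^p$ (as in \cite{EGZ}), applied on each fixed slice $M_t$ with constants depending only on $\Vol(M_t)$ and $\alpha_{\Omega_t}(\omega_t)$, both of which are uniformly controlled for $|t|\leq r$.

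For the interior regularity, I would appeal directly to the cited results: Proposition 2.7 of \cite{B15} gives existence and uniqueness of the solution to the homogeneous complex Monge–Amp\`ere Dirichlet problem, and the regularity theory for such equations (Proposition 2.17 of \cite{SSY}, together with \cite{PS}) yields that the solution is locally $C^{1,\alpha}$ on the smooth locus of $\mathcal{M}_r$. The point to check is that $\mathcal{M}_r$, viewed as a complex analytic variety with smooth boundary $\partial\mathcal{M}_r$ and equipped with the K\"ahler form $\mathcal{W}=\omega_t+\sqrt{-1}\,dt\wedge d\bar t$, genuinely falls under the hypotheses of those propositions — i.e. that the boundary data $\Psi$ is continuous (indeed smooth, by the smoothness of each $\phi_{t,\lambda}$ established in the previous proposition and smooth dependence on $t\in\Delta^{*}$), and that the singular set of $\mathcal{M}_r$ is contained in the central fiber, away from which all the geometry is smooth.

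The main obstacle, I expect, is not the regularity — which is essentially quoted — but rather securing the \emph{uniform} boundary $L^{\infty}$-bound $\sup_{|t|=r}\|\phi_{t,\lambda}\|_{L^{\infty}(M_t)}<\infty$ with constants independent of $t$ as $|t|$ ranges up to $r$. Here one must be careful that the background data $(\omega_t,\Omega_t,\theta_{M_t})$ degenerate in a controlled way as $t\to 0$: the relevant inputs are the uniform upper bound on $\Vol(M_t)$, the uniform positive lower bound $\alpha_{\Omega_t}(\omega_t)>l$ from Lemma \ref{L4.2}, and the uniform control of $\theta_{M_t}$ from Lemma \ref{L4.1}. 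Feeding these into the Ko\l odziej/Eyssidieux–Guedj–Zeriahi a priori estimate on each slice produces a bound $\|\phi_{t,\lambda}\|_{L^{\infty}}\leq C(l,\sup\Vol(M_t))$, and then the comparison argument above transfers this to the bound $\|\Phi\|_{L^{\infty}(\mathcal{M}_r)}\leq C$ claimed in the lemma.
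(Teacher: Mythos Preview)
The paper does not give its own proof of this lemma; it is stated as an auxiliary fact with a parenthetical citation to Proposition~2.17 of \cite{SSY} and to \cite{PS}, and the text proceeds immediately to use the solution $\Phi$. Your proposal therefore goes further than the paper, and your appeal to \cite{B15}, \cite{SSY}, \cite{PS} for existence, uniqueness and interior $C^{1,\alpha}$ regularity is exactly in line with what the paper invokes.

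Two points in your outline should be corrected, however.

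\textbf{The upper bound is not ``immediate'' from the envelope.} The competitors $\Phi'$ in the Perron family are only constrained by $\Phi'\leq\Psi$ \emph{on the boundary} $\partial\mathcal{M}_r$; nothing in the envelope definition bounds them in the interior, and $\mathcal{W}$-psh functions on a domain with compact fibres do not in general satisfy a naive maximum principle. The standard route in the references you (and the paper) cite is different: one passes to a resolution $p:\widetilde{\mathcal{M}}_r\to\mathcal{M}_r$ so that the total space is smooth, pulls back $\mathcal{W}$ and the boundary data, and applies Phong--Sturm \cite{PS} on the smooth model. That result delivers a $C^{1,\alpha}$ (hence bounded) solution in one stroke, which then pushes down to $\mathcal{M}_r$. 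So in the cited approach the $L^{\infty}$ bound and the interior regularity are not really separate steps; your decomposition into ``envelope comparison for $L^\infty$'' plus ``citation for $C^{1,\alpha}$'' would require an additional argument (e.g.\ showing that $t\mapsto\sup_{M_t}\Phi'$ is subharmonic on $\Delta_r$) that you have not supplied.

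\textbf{The degeneration worry is misplaced.} The boundary $\partial\mathcal{M}_r$ sits over the circle $|t|=r$, a fixed positive distance from the singular fibre. There is no degeneration there: $\{M_t:|t|=r\}$ is a compact smooth family of Fano manifolds, and $\sup_{|t|=r}\|\phi_{t,\lambda}\|_{L^\infty}$ is finite simply by continuity/compactness, without needing the uniform $\alpha$-invariant of Lemma~\ref{L4.2} or any control as $t\to 0$. (In the subsequent proof of Theorem~\ref{T4.1} one lets $r\to 0$, but the inequality $\sup_{|t|=r}F'_{V_t,\lambda}\geq F'_{V_0,\lambda}$ is used for each fixed $r$, so the constant $C$ in the present lemma is allowed to depend on $r$.) The genuinely delicate issue is the opposite of what you flag: it is extending the bound from the smooth boundary \emph{into} the interior across the singular central fibre, and this is precisely what the resolution-plus-\cite{PS} argument of \cite{SSY} handles.
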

Denote $\Phi$ by the solution of (\ref{e4.1}). For $t\in \Delta_r$, set
$$
f(t)=-r(\lambda)\cdot E_{V_t}(\Phi_t)=-r(\lambda)\cdot\int_0^1\int_{M_t}\Phi_te^{\theta_{M_t}+s\cdot V_t(\Phi_t)}\omega_{s\Phi_t}^n\wedge ds
$$
and
$$
g(t)=-\log\int_{M_t}e^{-r(\lambda)\Phi_t}\Omega_t
$$
where $\Phi_t=\Phi|_{M_t}$. Then the twisted Ding functional is the sum of these two functions.
\begin{proposition}\label{P4.3}
The function $g(t)$ is continuous and subharmonic on $\Delta_r$.
\end{proposition}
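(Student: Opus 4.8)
The plan is to treat the two assertions by different means: continuity by a dominated‑convergence argument based on the uniform bound for $\Phi$, and subharmonicity by exhibiting $g$ as minus the logarithm of a fibrewise volume integral of a semipositively curved metric on the relative anticanonical bundle. For continuity, the previous lemma gives $\|\Phi\|_{L^\infty(\mathcal{M}_r)}\leq C$ and the continuity of $\Phi$ on $\mathcal{M}_r\setminus\mathcal{S}$, $\mathcal{S}$ denoting the singular locus; writing $u(t):=\int_{M_t}e^{-r(\lambda)\Phi_t}\Omega_t$ and pulling back by the embedding $G$ one has $u(t)=\int_{M_0\setminus S}e^{-r(\lambda)(\Phi_t\circ G_t)}\,G_t^{*}\Omega_t$, where $G_t^{*}\Omega_t\to\Omega_0$ smoothly on $M_0\setminus S$, $\Phi_t\circ G_t\to\Phi_0$ pointwise there, and $|\Phi_t\circ G_t|\leq C$. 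Since $\Omega_0$ has finite total mass, dominated convergence yields $u(t)\to u(0)=\int_{M_0}e^{-r(\lambda)\Phi_0}\Omega_0$ as $t\to0$, and near any $t_0\neq0$ the same conclusion is immediate since $\pi$ is a submersion there; as $e^{-r(\lambda)C}\Vol_{\Omega_t}(M_t)\leq u(t)\leq e^{r(\lambda)C}\Vol_{\Omega_t}(M_t)$ with $t\mapsto\Vol_{\Omega_t}(M_t)$ continuous and strictly positive, $u$ is bounded between positive constants and $g=-\log u$ is continuous on $\Delta_r$.

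For subharmonicity, I would let $\Omega_{\mathcal{M}_r/\Delta_r}$ be the relative volume form induced by the relative $m$‑anticanonical embedding $\mathcal{M}_r\hookrightarrow\mathbb{CP}^N\times\Delta_r$, normalised so that $\Omega_{\mathcal{M}_r/\Delta_r}|_{M_t}=\Omega_t$; the metric it induces on $-K_{\mathcal{M}_r/\Delta_r}$ has curvature $\frac{1}{m}\omega_{FS}|_{\mathcal{M}_r}\geq0$, and so $h_\Phi:=e^{-r(\lambda)\Phi}\,\Omega_{\mathcal{M}_r/\Delta_r}$ is a bounded metric on $-K_{\mathcal{M}_r/\Delta_r}$ with curvature current $\frac{1}{m}\omega_{FS}|_{\mathcal{M}_r}+r(\lambda)\ddbar\Phi$. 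Using that $\Phi$ is $\mathcal{W}$‑psh, i.e. $\mathcal{W}+\ddbar\Phi\geq0$, and $r(\lambda)\in(0,1)$, this curvature is semipositive on $\mathcal{M}_r\setminus\mathcal{S}$ — it is a convex combination of $\frac{1}{m}\omega_{FS}|_{\mathcal{M}_r}$ and $\mathcal{W}+\ddbar\Phi$, up to the smooth $\sqrt{-1}\,dt\wedge d\bar t$ part of $\mathcal{W}$ — and it extends as a positive current across $\mathcal{S}$ because $\Phi$ is bounded there and $\mathcal{S}$ is pluripolar in each fibre. Berndtsson's plurisubharmonic variation theorem (cf. the analogous step in \cite{SSY}), applied to $(-K_{\mathcal{M}_r/\Delta_r},h_\Phi)$ along $\pi$, then gives that $t\mapsto-\log\int_{M_t}(\text{fibre volume of }h_\Phi)=g(t)$ is plurisubharmonic, which on the one‑dimensional disc $\Delta_r$ is precisely the asserted subharmonicity.

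I expect the main obstacle to be twofold. First, making the semipositivity of the curvature of $h_\Phi$ rigorous on the \emph{total space}: this is exactly where the hypotheses $r(\lambda)<1$ and ``$\Phi$ is the $\mathcal{W}$‑psh solution of the degenerate Monge--Amp\`ere equation (\ref{e4.1})'' are used, and one must keep track of the $\sqrt{-1}\,dt\wedge d\bar t$ contribution of $\mathcal{W}$, which either is absorbed by a rescaling of $\Omega_t$ (leaving the twisted K\"ahler--Ricci soliton equation unchanged) or only perturbs $g$ by a further subharmonic term. Second, Berndtsson's theorem is classically stated for smooth proper submersions, whereas $\pi$ has a singular central fibre; to circumvent this I would pass to a log resolution of $\mathcal{M}_r$, or else work on $\mathcal{M}_r^{\mathrm{reg}}\to\Delta_r$ and use the bounds $\|\Phi\|_{L^\infty}\leq C$ and $\Vol_{\Omega_t}(M_t)\leq C$ to show that the mass carried near $\mathcal{S}$ is negligible, so that $g$ is realised as a decreasing limit of subharmonic functions defined off $\mathcal{S}$.
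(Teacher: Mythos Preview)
Your approach is essentially the paper's: continuity via dominated convergence (the paper cites Lemma~2 of \cite{Li1} for this, which is exactly your argument), and subharmonicity via Berndtsson's positivity of direct images applied to the metric $e^{-r(\lambda)\Phi}\Omega$ on $-K_{\mathcal{M}/\Delta}$.

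Two points where the paper is cleaner. First, the curvature: the paper takes the reference metric on $-K_{\mathcal{M}/\Delta}$ to have curvature $\mathcal{W}$ (not $\frac{1}{m}\omega_{FS}|_{\mathcal{M}_r}$), so that the curvature of $e^{-r(\lambda)\Phi}\Omega$ is exactly
\[
r(\lambda)(\mathcal{W}+\ddbar\Phi)+(1-r(\lambda))\mathcal{W}\ \geq\ 0,
\]
a genuine convex combination with no residual $\sqrt{-1}\,dt\wedge d\bar t$ term to worry about. Your acknowledgment that this term ``only perturbs $g$ by a further subharmonic term'' goes in the wrong direction: subtracting a subharmonic function from a subharmonic function need not give something subharmonic, so you should set up the reference curvature as $\mathcal{W}$ from the start.

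Second, the singular fibre: rather than log resolutions or cutoffs, the paper simply applies Berndtsson on small discs $\Delta'\subset\Delta_r^*$ (where $\pi$ is a smooth submersion), obtaining subharmonicity on $\Delta_r^*$; since you have already shown $g$ is continuous on all of $\Delta_r$ and in particular locally bounded, the isolated singularity at $0$ is removable and $g$ is subharmonic on $\Delta_r$. This replaces both of your proposed workarounds.
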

\begin{proof}
From the $C^{1,\alpha}$ regularity of $\Phi$, $g(t)$ is continuous on $\Delta_r^*$. Next we prove that $g$ is subharmonic on $\Delta_r^*$. It suffices to prove this for $t$ in a small disk $\Delta'\subset \Delta^*$. $e^{-r(\lambda)\Phi_t}\Omega_t$ can be viewed as a smooth Hermitian metric on $K^{-1}_{\mathcal{M}/\Delta}$ with positive curvature $r(\lambda)(\mathcal{W}+\sqrt{-1}\partial\bar{\partial}\Phi)+(1-r(\lambda))\mathcal{W}$. Consider the direct image bundle $D$ with fibers $D_t=\Gamma(M_t,K_{M_t}^{-1}\otimes K_{M_t})$, the trivial section $e$ has $L^2$-norm given by
$$
||e||_t^2=\int_{M_t}e^{-r(\lambda)\Phi_t}\Omega_t.
$$
Berndtsson's positivity of the direct image bundle (see Lemma 2.1 \cite{Bo1} and Theorem 3.1 \cite{Bo2}) implies that $-\log||e||_t^2$ is a smooth subharmonic function over $\Delta'$. Note that $g$ is continuous at $t=0$ by the same calculation of Lemma 2 \cite{Li1}. Therefore $g$ is subharmonic on $\Delta_r$.
\end{proof}
\begin{proposition}\label{P4.4}
The function $f(t)$ is continuous on $\Delta_r$.
\end{proposition}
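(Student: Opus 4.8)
The plan is to prove continuity of $f$ separately on the punctured disc $\Delta_r^*$ and at the origin, the latter being the genuinely delicate point.

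\emph{Continuity on $\Delta_r^*$.} Over $\Delta_r^*$ the family $\mathcal{M}_r\to\Delta$ is smooth, and by the auxiliary lemma preceding Proposition~\ref{P4.3} the function $\Phi$ is $C^{1,\alpha}$ near each fibre $M_t$ ($t\ne0$), while $\omega_t$, $\theta_{M_t}$, $V_t$ depend smoothly on $t$. Fixing $t_0\in\Delta_r^*$ and trivialising the family near $M_{t_0}$, I would pull everything back to the fixed manifold $M_{t_0}$: then $\Phi_t\to\Phi_{t_0}$ in $C^0$ with uniform $C^{1,\alpha}$ bounds (so $V_t(\Phi_t)\to V_{t_0}(\Phi_{t_0})$ uniformly), and $\omega_t\to\omega_{t_0}$, $\theta_{M_t}\to\theta_{M_{t_0}}$ smoothly. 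By Bedford--Taylor continuity of the Monge--Amp\`ere operator along uniformly convergent bounded $\omega$-psh functions, $\omega_{s\Phi_t}^n\to\omega_{s\Phi_{t_0}}^n$ weakly for each $s$; the densities $\Phi_t e^{\theta_{M_t}+sV_t(\Phi_t)}$ converge uniformly and stay uniformly bounded, and the total masses $\int_{M_t}\omega_{s\Phi_t}^n=\int_{M_t}\omega_t^n$ are bounded. Hence $\int_{M_t}\Phi_t e^{\theta_{M_t}+sV_t(\Phi_t)}\omega_{s\Phi_t}^n$ converges for each $s$ and stays bounded in $s$, so dominated convergence in $s\in[0,1]$ gives $f(t)\to f(t_0)$.

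\emph{Continuity at $t=0$.} I would show $\lim_{t\to0}E_{V_t}(\Phi_t)=E_{V_0}(\Phi_0)$ with $\Phi_0:=\Phi|_{M_0}$. The available bounds, uniform in $t$ and $s$, are $\|\Phi\|_{L^\infty}\le C$ from the auxiliary lemma, $|\theta_{M_t}|\le C$ from Lemma~\ref{L4.1}, and $|V_t(\Phi_t)|\le C$ from Lemma~\ref{L2.1} applied to the $\omega_t$-psh function $\Phi_t$ (with constant uniform in $t$, as it depends only on the ambient embedding); also $\int_{M_t}\omega_{s\Phi_t}^n=\int_{M_t}\omega_t^n$ is constant. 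Using the embedding $G:(M_0\setminus S)\times\Delta\to\mathcal{M}$, fix a compact $U\Subset M_0\setminus S$ and split, for each $s$,
$$
\int_{M_t}\Phi_t e^{\theta_{M_t}+sV_t(\Phi_t)}\omega_{s\Phi_t}^n=\int_{G_t(U)}(\cdots)+\int_{M_t\setminus G_t(U)}(\cdots),
$$
where $(\cdots)$ abbreviates the common integrand. On $G_t(U)$ one pulls back by $G_t$: since $M_0\setminus S$ lies in the smooth locus of $\mathcal{M}_r$, $\Phi$ is $C^{1,\alpha}$ there, so $\Phi_t\circ G_t\to\Phi_0$ in $C^0$ with uniform $C^{1,\alpha}$ bounds; combined with Lemma~\ref{L4.1} ($G_t^*\omega_t\to\omega_0$, $\theta_{M_t}\circ G_t\to\theta_{M_0}$, $(G_t^{-1})_*V_t\to V_0$ smoothly) and the weak-continuity-plus-dominated-convergence argument above, $\int_{G_t(U)}(\cdots)\to\int_U\Phi_0 e^{\theta_{M_0}+sV_0(\Phi_0)}\omega_{s\Phi_0}^n$ for each $s$; integrating in $s$ and letting $U\uparrow M_0\setminus S$ this tends to $E_{V_0}(\Phi_0)$, since $\Phi_0$ is bounded $\omega_0$-psh, so $\omega_{s\Phi_0}^n$ carries no mass on the pluripolar set $S$. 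For the remainder, the uniform bounds give $\big|\int_{M_t\setminus G_t(U)}(\cdots)\big|\le C\int_{M_t\setminus G_t(U)}\omega_{s\Phi_t}^n$, so the whole argument reduces to the uniform non-concentration estimate
$$
\sup_{t\in\Delta_r,\ s\in[0,1]}\ \int_{M_t\setminus G_t(U)}\omega_{s\Phi_t}^n\ \longrightarrow\ 0\qquad\text{as }U\uparrow M_0\setminus S.
$$

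The main obstacle is precisely this uniform mass estimate near $S$. To prove it I would expand $\omega_{s\Phi_t}^n=\big((1-s)\omega_t+s\omega_{\Phi_t}\big)^n$ into mixed terms $\omega_t^{n-k}\wedge\omega_{\Phi_t}^k$; the term $\omega_t^n$ is the restriction of the fixed ambient Fubini--Study form, whose mass over $M_t\setminus G_t(U)$ is uniformly small once $U$ is large enough (using that the cycles $[M_t]$ converge to $[M_0]$ in $\mathbb{CP}^N$), and for the mixed terms I would run the Bedford--Taylor integration-by-parts comparison of Monge--Amp\`ere masses (cut-offs supported near $S$, moving $\sqrt{-1}\partial\bar\partial$ off $\Phi_t$ at the cost of $\|\Phi_t\|_{L^\infty}\le C$) to dominate $\int_{M_t\setminus G_t(U)}\omega_t^{n-k}\wedge\omega_{\Phi_t}^k$ by $C\big(\int_N\omega_t^n+\int_N\omega_{\Phi_t}^n\big)$ over a slightly larger neighbourhood $N$ of $S$; finally $\sup_t\int_N\omega_{\Phi_t}^n\to0$ as $N\downarrow S$ follows from the $\mathbb{Q}$-Gorenstein structure of the smoothing exactly as in the volume estimates of \cite{SSY} (ultimately the uniform $L^p$-integrability, $p>1$, of the adapted measures $\Omega_t$). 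Letting first $U\uparrow M_0\setminus S$ and then $t\to0$ in the decomposition yields $\lim_{t\to0}f(t)=-r(\lambda)E_{V_0}(\Phi_0)=f(0)$, which together with the continuity on $\Delta_r^*$ proves the proposition.
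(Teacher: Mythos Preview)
Your decomposition into a compact ``good'' region $G_t(U)$ and a remainder near $S$, together with the Bedford--Taylor convergence on the good part, matches the paper's argument exactly. The divergence, and the gap, is in how you control the remainder.

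You reduce the remainder to the uniform non-concentration estimate $\sup_{t,s}\int_{M_t\setminus G_t(U)}\omega_{s\Phi_t}^n\to0$ and propose to obtain it by CLN-type integration by parts together with the $L^p$-integrability of the adapted measures $\Omega_t$ from \cite{SSY}. This last appeal is unjustified: for $|t|<r$ the function $\Phi_t$ is the restriction of the solution to the homogeneous Monge--Amp\`ere equation with boundary data $\Psi$, not the twisted soliton potential, so there is no relation between $\omega_{\Phi_t}^n$ and $\Omega_t$; the \cite{SSY} estimates control $\Omega_t/\omega_t^n$, not the Monge--Amp\`ere mass of an arbitrary bounded $\omega_t$-psh function near $S$. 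Your integration-by-parts step also bounds mixed masses by a quantity involving $\int_N\omega_{\Phi_t}^n$ itself, so without an independent input the argument is circular.

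The paper avoids this entirely by a complement trick you already have all the ingredients for. Since $\int_{M_t}\omega_{s\Phi_t}^n=\int_{M_t}\omega_t^n$ is a topological constant along the flat family, and since you have shown $\int_{G_t(U)}\omega_{s\Phi_t}^n\to\int_U\omega_{s\Phi_0}^n$, subtraction gives $\int_{M_t\setminus G_t(U)}\omega_{s\Phi_t}^n\to\int_{M_0\setminus U}\omega_{s\Phi_0}^n$. The right-hand side is small once $U$ is chosen large, because $\Phi_0$ is a bounded $\omega_0$-psh function and $\omega_{s\Phi_0}^n$ puts no mass on the pluripolar set $S$. Thus the paper first fixes $U=U_0^\delta$ with $\int_0^1\int_{M_0\setminus U_0^\delta}\omega_{s\Phi_0}^n\,ds<\delta$, then lets $t\to0$, then $\delta\to0$; no direct uniform estimate near $S$ is needed. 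Replacing your final paragraph with this observation closes the gap.
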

\begin{proof}
We only prove the continuity at $t=0$. For any $\delta>0$, we choose $U_0^\delta$ to be the complement of a small neighborhood $S$ (the singular set of $M_0$) such that
$$
\int_0^1\int_{M_0\backslash U_0^\delta}e^{\theta_{M_0}+s\cdot V_0(\Phi_0)}\omega^n_{s\Phi_0}\wedge ds<\delta.
$$
We then extend $U_0^\delta$ to a smooth family of open subsets $U_t^\delta$ in $M_t$. Using the same notations as the beginning of subsection 4.1, we let $G:U_0^\delta\times \Delta_r\rightarrow \mathcal{M}_r$ be the smooth embedding. By the $C^{1,\alpha}$ regularity of $\Phi$, we see that $\Phi_t\circ G_t$ $C^{1,\alpha}$-converges to $\Phi_0$ and $(G_t^{-1})_*V_t(\Phi_t\circ G_t)$ $C^\alpha$-converges to $V_0(\Phi_0)$ on $U_0^\delta$. By \cite{BT}, $G_t^*\omega_{s\Phi_t}^n$ converges to $\omega_{s\Phi_0}^n$ as currents on $U_0^\delta$. So we have that
\begin{align*}
& \lim_{t\rightarrow 0}\int_0^1\int_{U_t^\delta}\Phi_te^{\theta_{M_t}+s\cdot V_t(\Phi_t)}\omega_{s\Phi_t}^n\wedge ds=\int_0^1\int_{U_0^\delta}\Phi_0e^{\theta_{M_0}+s\cdot V_0(\Phi_0)}\omega_{s\Phi_0}^n\wedge ds,\\
& \lim_{t\rightarrow 0}\int_0^1\int_{U_t^\delta}e^{\theta_{M_t}+s\cdot V_t(\Phi_t)}\omega_{s\Phi_t}^n\wedge ds=\int_0^1\int_{U_0^\delta}e^{\theta_{M_0}+s\cdot V_0(\Phi_0)}\omega_{s\Phi_0}^n\wedge ds,\\
&\lim_{t\rightarrow 0}\int_0^1\int_{U_t^\delta}\omega_{s\Phi_t}^n\wedge ds=\int_0^1\int_{U_0^\delta}\omega_{s\Phi_0}^n\wedge ds.\\
\end{align*}
The following calculation
$$
\Big|\int_0^1\int_{M_t\backslash U_t^\delta}\Phi_te^{\theta_{M_t}+s\cdot V_t(\Phi_t)}\omega_{s\Phi_t}^n\wedge ds\Big|\leq e^C\cdot ||\Phi||_{L^\infty}\cdot \Big(\int_0^1\int_{M_t}\omega_{s\Phi_t}^n\wedge ds-\int_0^1\int_{U_t^\delta}\omega_{s\Phi_t}^n\wedge ds\Big)
$$
implies
$$
\lim_{t\rightarrow 0}\Big|\int_0^1\int_{M_t\backslash U_t^\delta}\Phi_te^{\theta_{M_t}+s\cdot V_t(\Phi_t)}\omega_{s\Phi_t}^n\wedge ds\Big|\leq \delta e^C\cdot ||\Phi||_{L^\infty}.
$$
Let $\delta\rightarrow 0$, we conclude this proposition.
\end{proof}
\begin{proposition}\label{P4.5}
The function $f(t)$ is subharmonic on $\Delta_r$.
\end{proposition}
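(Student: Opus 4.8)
The plan is to show that $f$ is harmonic on the punctured disc $\Delta_r^{*}:=\Delta_r\setminus\{0\}$ and then to remove the singularity at $t=0$ using the continuity furnished by Proposition \ref{P4.4}.

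First I would observe that $\mathcal{M}_r^{*}:=\pi^{-1}(\Delta_r^{*})$ is smooth: over $\Delta_r^{*}$ every fibre $M_t$ is smooth, and a variety carrying a smooth Cartier divisor through a point, here the fibre $M_t$, which is cut out locally by one equation, is smooth there; hence $\pi$ is a proper holomorphic submersion over $\Delta_r^{*}$. As recalled before Proposition \ref{P4.3}, $\Phi$ is $C^{1,\alpha}$ on $\mathcal{M}_r^{*}$, and being the Perron envelope it solves $(\mathcal{W}+\sqrt{-1}\partial\bar\partial\Phi)^{n+1}=0$ there in the Bedford-Taylor sense. Fix a small disc $\Delta'\subset\Delta_r^{*}$. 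On $\mathcal{M}|_{\Delta'}$ the global field $\mathcal{V}$ restricts to $V_t$ on each fibre, and one can choose a Hamiltonian $\theta_{\mathcal{M}}$ for $\mathcal{V}$ whose restriction to $M_t$ is $\theta_{M_t}$ up to a constant independent of $t$ (the weighted volume $\int_{M_t}e^{\theta_{M_t}}\omega_t^n$ being locally constant in the family; cf.\ Lemma \ref{L4.1}). The twisted analogue of Proposition 2.17 \cite{BN14}, i.e.\ the twisted counterpart, for a non-product family, of the direct-image positivity already exploited for $g$ in Proposition \ref{P4.3}, then yields a fibre-integral identity of the form
$$
\sqrt{-1}\,\partial_t\bar\partial_t\, E_{V_t}(\Phi_t)=c\cdot\pi_{*}\!\left(e^{\theta_{\mathcal{M}}+\mathcal{V}(\Phi)}\,(\mathcal{W}+\sqrt{-1}\partial\bar\partial\Phi)^{n+1}\right),\qquad c>0,
$$
whose right-hand side vanishes on $\mathcal{M}|_{\Delta'}$ because $(\mathcal{W}+\sqrt{-1}\partial\bar\partial\Phi)^{n+1}=0$ there. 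Hence $E_{V_t}(\Phi_t)$, and with it $f(t)=-r(\lambda)E_{V_t}(\Phi_t)$, is harmonic on $\Delta_r^{*}$. (To circumvent the limited regularity of $\Phi$ one can instead approximate it from above by smooth $\mathcal{W}$-psh functions as in \cite{BK07} and pass to the limit, using continuity of $E_{V_t}$ along decreasing sequences, Lemma \ref{L2.3}.)

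Next I would remove the singularity at $t=0$: by Proposition \ref{P4.4}, $f$ is continuous on all of $\Delta_r$, and a function that is continuous on $\Delta_r$ and harmonic on $\Delta_r^{*}$ is harmonic on $\Delta_r$ (solve the Dirichlet problem on a disc around $0$ with boundary values $f$ and invoke uniqueness). Thus $f$ is harmonic, in particular subharmonic, on $\Delta_r$, which is the assertion. Together with Proposition \ref{P4.3}, it follows that the twisted Ding functional $t\mapsto F_{V_t,\lambda}(\Phi_t)=f(t)+g(t)$ is subharmonic on $\Delta_r$.

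I expect the first step to be the main obstacle. A generic $\mathcal{W}$-psh $\Phi$ (an arbitrary subgeodesic) would only make $E_{V_t}(\Phi_t)$ subharmonic, hence $f$ superharmonic, which is the wrong sign; it is exactly the homogeneous Monge-Amp\`{e}re equation that upgrades subharmonicity to harmonicity, and this is why the Dirichlet problem (\ref{e4.1}) was introduced. Carrying this out in the present family requires controlling the variation of the complex structure of the fibres, transporting the soliton vector field through the global field $\mathcal{V}$, and keeping the Hamiltonians $\theta_{M_t}$ consistently normalized across the family, all with only the $C^{1,\alpha}$-regularity of $\Phi$ away from the singular central fibre. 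Here one leans on Lemma \ref{L4.1} and on the Berndtsson-type machinery already used in Proposition \ref{P4.3}.
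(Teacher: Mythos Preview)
Your overall strategy---establish the sign of $\sqrt{-1}\partial\bar\partial f$ on $\Delta_r^{*}$ via a fibre-integral formula, then remove the singularity using the continuity from Proposition~\ref{P4.4}---matches the paper's approach. However, the specific identity you write down is incorrect, and as a consequence your claim that $f$ is \emph{harmonic} on $\Delta_r^{*}$ is false; it is only subharmonic.

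The formula you invoke,
\[
\sqrt{-1}\,\partial_t\bar\partial_t\, E_{V_t}(\Phi_t)=c\cdot\pi_{*}\!\left(e^{\theta_{\mathcal{M}}+\mathcal{V}(\Phi)}\,\mathcal{W}_\Phi^{n+1}\right),
\]
is the correct statement in the setting of Proposition~2.17 of \cite{BN14}, namely for a path in $\PSH(M,\omega_0)^T$ on a \emph{fixed} background $(M,\omega_0)$, where the reference form pulled back to the product satisfies $(\pi^*\omega_0)^{n+1}=0$. In the present family the background $\mathcal{W}=\omega_t+\sqrt{-1}\,dt\wedge d\bar t$ is a genuine K\"ahler form on the $(n+1)$-dimensional total space, so $\mathcal{W}^{n+1}>0$. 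Carrying out the integration by parts (this is exactly what the paper does, and it is where the work lies) one obtains instead
\[
\int_{\Delta'} f\,\sqrt{-1}\partial\bar\partial h
=-\frac{r(\lambda)}{n+1}\int_{\pi^{-1}(\Delta')}\pi^*h\,\Big(e^{\theta_{\mathcal{M}}+\mathcal{V}(\Phi)}\mathcal{W}_\Phi^{n+1}-e^{\theta_{\mathcal{M}}}\mathcal{W}^{n+1}\Big).
\]
The first term vanishes by the homogeneous Monge--Amp\`ere equation, but the second survives and gives
\[
\int_{\Delta'} f\,\sqrt{-1}\partial\bar\partial h
=\frac{r(\lambda)}{n+1}\int_{\pi^{-1}(\Delta')}\pi^*h\,e^{\theta_{\mathcal{M}}}\mathcal{W}^{n+1}\ \geq\ 0,
\]
which is strictly positive for generic $h\ge 0$. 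Thus $f$ is subharmonic (and not harmonic) on $\Delta_r^{*}$. Your removable-singularity argument, which was tailored to harmonicity via the Dirichlet problem, then needs the obvious adjustment: a continuous function on $\Delta_r$ that is subharmonic on $\Delta_r^{*}$ is subharmonic on $\Delta_r$ (a point is polar). With that correction your outline coincides with the paper's proof.
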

\begin{proof}
Choose a small dick $\Delta'\subset \Delta_r^*$, we want to show that $f$ is subharmonic on $\Delta'$. Let $h$ be an arbitrary non-negative function supported on $\Delta'$, then we have the following calculation
\begin{align*}
&\int_{\Delta'}f\sqrt{-1}\partial\bar{\partial}h =-r(\lambda) \int_0^1\int_{\pi^{-1}(\Delta')} \pi^*h \sqrt{-1}\partial\bar{\partial}(\Phi e^{\theta_{\mathcal{M}}+s\cdot \mathcal{V}(\Phi)})\mathcal{W}_{s\Phi}^n\wedge ds=\\
&\quad -r(\lambda) \int_0^1\int_{\pi^{-1}(\Delta')} \pi^*h\sqrt{-1}\partial\bar{\partial}\Phi\cdot e^{\theta_{\mathcal{M}}+s\cdot \mathcal{V}(\Phi)}\mathcal{W}_{s\Phi}^n\wedge ds\\
&\quad +r(\lambda) \int_0^1\int_{\pi^{-1}(\Delta')} \pi^*h \sqrt{-1}\bar{\partial}\Phi\wedge\partial(\theta_{\mathcal{M}}+s\cdot \mathcal{V}(\Phi))e^{\theta_{\mathcal{M}}+s\cdot \mathcal{V}(\Phi)}\mathcal{W}_{s\Phi}^n\wedge ds\\
&\quad -r(\lambda) \int_0^1\int_{\pi^{-1}(\Delta')} \pi^*h \sqrt{-1}\partial\Phi\wedge\bar{\partial}(\theta_{\mathcal{M}}+s\cdot \mathcal{V}(\Phi))e^{\theta_{\mathcal{M}}+s\cdot \mathcal{V}(\Phi)}\mathcal{W}_{s\Phi}^n\wedge ds\\
&\quad -r(\lambda) \int_0^1\int_{\pi^{-1}(\Delta')}\pi^*h\cdot\Phi \sqrt{-1}\partial(\theta_{\mathcal{M}}+s\cdot \mathcal{V}(\Phi))\wedge\bar{\partial}(\theta_{\mathcal{M}}+s\cdot \mathcal{V}(\Phi))e^{\theta_{\mathcal{M}}+s\cdot \mathcal{V}(\Phi)}\mathcal{W}_{s\Phi}^n\wedge ds\\
&\quad -r(\lambda) \int_0^1\int_{\pi^{-1}(\Delta')}\pi^*h\cdot\Phi \sqrt{-1}\partial\bar{\partial}(\theta_{\mathcal{M}}+s\cdot \mathcal{V}(\Phi))e^{\theta_{\mathcal{M}}+s\cdot \mathcal{V}(\Phi)}\mathcal{W}_{s\Phi}^n\wedge ds.
\end{align*}
Note that
$$
i_{\mathcal{V}}\mathcal{W}_{s\Phi}=\sqrt{-1}\bar{\partial}(\theta_{\mathcal{M}}+s\cdot \mathcal{V}(\Phi))
$$
and
$$
i_{\bar{\mathcal{V}}}\mathcal{W}_{s\Phi}=-\sqrt{-1}\partial(\theta_{\mathcal{M}}+s\cdot \mathcal{V}(\Phi)).
$$
So we have
\begin{align*}
& \int_0^1\int_{\pi^{-1}(\Delta')} \pi^*h \sqrt{-1}\bar{\partial}\Phi\wedge\partial(\theta_{\mathcal{M}}+s\cdot \mathcal{V}(\Phi))e^{\theta_{\mathcal{M}}+s\cdot \mathcal{V}(\Phi)}\mathcal{W}_{s\Phi}^n\wedge ds\\
&=-\frac{1}{n+1}\int_0^1\int_{\pi^{-1}(\Delta')} \pi^*h\cdot \overline{\mathcal{V}(\Phi)}\cdot e^{\theta_{\mathcal{M}}+s\cdot \mathcal{V}(\Phi)}\mathcal{W}_{s\Phi}^{n+1}\wedge ds
\end{align*}
and
\begin{align*}
&\int_0^1\int_{\pi^{-1}(\Delta')} \pi^*h \sqrt{-1}\partial\Phi\wedge\bar{\partial}(\theta_{\mathcal{M}}+s\cdot \mathcal{V}(\Phi))e^{\theta_{\mathcal{M}}+s\cdot \mathcal{V}(\Phi)}\mathcal{W}_{s\Phi}^n\wedge ds\\
&=\frac{1}{n+1}\int_0^1\int_{\pi^{-1}(\Delta')} \pi^*h\cdot \mathcal{V}(\Phi)\cdot e^{\theta_{\mathcal{M}}+s\cdot \mathcal{V}(\Phi)}\mathcal{W}_{s\Phi}^{n+1}\wedge ds.
\end{align*}
The integral by parts implies that
\begin{align*}
& \quad \int_0^1\int_{\pi^{-1}(\Delta')}\pi^*h\cdot\Phi \sqrt{-1}\partial\bar{\partial}(\theta_{\mathcal{M}}+s\cdot \mathcal{V}(\Phi))e^{\theta_{\mathcal{M}}+s\cdot \mathcal{V}(\Phi)}\mathcal{W}_{s\Phi}^n\wedge ds \\
&= -\int_0^1\int_{\pi^{-1}(\Delta')}(\pi^*h\partial\Phi+\Phi\partial(\pi^*h))\wedge\sqrt{-1}\bar{\partial}(\theta_{\mathcal{M}}+s\cdot \mathcal{V}(\Phi))e^{\theta_{\mathcal{M}}+s\cdot \mathcal{V}(\Phi)}\mathcal{W}_{s\Phi}^n\wedge ds\\
& \quad -\int_0^1\int_{\pi^{-1}(\Delta')}\pi^*h\cdot\Phi \sqrt{-1}\partial(\theta_{\mathcal{M}}+s\cdot \mathcal{V}(\Phi))\wedge\bar{\partial}(\theta_{\mathcal{M}}+s\cdot \mathcal{V}(\Phi))e^{\theta_{\mathcal{M}}+s\cdot \mathcal{V}(\Phi)}\mathcal{W}_{s\Phi}^n\wedge ds\\
&=-\frac{1}{n+1}\int_0^1\int_{\pi^{-1}(\Delta')} \pi^*h\cdot \mathcal{V}(\Phi)\cdot e^{\theta_{\mathcal{M}}+s\cdot \mathcal{V}(\Phi)}\mathcal{W}_{s\Phi}^{n+1}\wedge ds \\
& \quad -\int_0^1\int_{\pi^{-1}(\Delta')}\pi^*h\cdot\Phi \sqrt{-1}\partial(\theta_{\mathcal{M}}+s\cdot \mathcal{V}(\Phi))\wedge\bar{\partial}(\theta_{\mathcal{M}}+s\cdot \mathcal{V}(\Phi))e^{\theta_{\mathcal{M}}+s\cdot \mathcal{V}(\Phi)}\mathcal{W}_{s\Phi}^n\wedge ds
\end{align*}
where the second equality holds due to $\mathcal{V}(\pi^*h)=0$. Therefore, we obtain
\begin{align*}
\int_{\Delta'}f\sqrt{-1}\partial\bar{\partial}h &= -r(\lambda)\int_0^1\int_{\pi^{-1}(\Delta')} \pi^*h \sqrt{-1}\partial\bar{\partial}\Phi \cdot e^{\theta_{\mathcal{M}}+s\cdot \mathcal{V}(\Phi)}\mathcal{W}_{s\Phi}^n\wedge ds \\
& \quad -\frac{r(\lambda)}{n+1}\int_0^1\int_{\pi^{-1}(\Delta')} \pi^*h\cdot \mathcal{V}(\Phi)\cdot e^{\theta_{\mathcal{M}}+s\cdot \mathcal{V}(\Phi)}\mathcal{W}_{s\Phi}^{n+1}\wedge ds\\
&=-\frac{r(\lambda)}{n+1}\int_{\pi^{-1}(\Delta')}\pi^*h\int_0^1\frac{d}{ds}(e^{\theta_{\mathcal{M}}+s\cdot \mathcal{V}(\Phi)}\mathcal{W}_{s\Phi}^{n+1})ds \\
&=-\frac{r(\lambda)}{n+1}\int_{\pi^{-1}(\Delta')}\pi^*h\cdot (e^{\theta_{\mathcal{M}}+\mathcal{V}(\Phi)}\mathcal{W}_{\Phi}^{n+1}-e^{\theta_{\mathcal{M}}}\mathcal{W}^{n+1})\\
&=\frac{r(\lambda)}{n+1}\int_{\pi^{-1}(\Delta')}\pi^*h\cdot e^{\theta_{\mathcal{M}}}\mathcal{W}^{n+1}\geq 0
\end{align*}
Thus, $f$ is subharmonic on $\Delta_r$ since it is continuous at $t=0$.
\end{proof}
Now we give the proof of Theorem \ref{T4.1}.
\begin{proof}[Proof of Theorem \ref{T4.1}]
From Proposition \ref{P4.3}, \ref{P4.4} and \ref{P4.5}, we know that $F_{V_t,\lambda}(\Phi_t)$ is a continuous subharmonic function on $\Delta_r$, so by the maximum principle
$$
\sup_{\Delta_r}F_{V_t,\lambda}(\Phi_t)=\sup_{\partial\Delta_r}F'_{V_t,\lambda}\geq F_{V_0,\lambda}(\Phi_0)\geq F'_{V_0,\lambda},
$$
where the last inequality bases on the fact that the twisted K\"{a}hler-Ricci soliton minimizes the twisted Ding functional. Finally, letting $r\rightarrow 0$, we obtain
$$
\limsup_{t\rightarrow 0}F'_{V_t,\lambda}\geq F'_{V_0,\lambda}.
$$
This proves the statement about the twisted Ding functional. Remark \ref{r4.1} claims that $\limsup_{t\rightarrow 0}M'_{V_t,\lambda}\geq F'_{V_0,\lambda}$, so we complete the proof.
\end{proof}
\subsection{$L^\infty$-estimates and locally higher order estimates for the potentials}
The goal of this subsection is to obtain $L^\infty$-estimates and Laplacian estimates for the potentials. First we establish some auxiliary lemmas. Let $M$ be a smooth Fano manifold, the Aubin's functional are given by
$$
I(\phi)=\int_M\phi(\omega^n-\omega_\phi^n) \ \ \textrm{and} \ \ J(\phi)=\int_0^1\int_M\phi(\omega^n-\omega_{s\phi}^n)\wedge ds
$$
where $\phi\in C^\infty(M)\cap \PSH(M,\omega)^T$.
\begin{lemma}\label{L4.4}
There are positive constants $C_1(\alpha_M)$ and $C_2(\beta_M)$ such that
$$
C_1(I(\phi)-J(\phi))\leq I_V(\phi)-J_V(\phi)\leq C_2(I(\phi)-J(\phi)).
$$
\end{lemma}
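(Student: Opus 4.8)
The strategy is to rewrite both $I(\phi)-J(\phi)$ and $I_V(\phi)-J_V(\phi)$ as integrals over $[0,1]\times M$ of one and the same nonnegative density, the twisted one carrying in addition the weight $e^{\theta_M+sV(\phi)}$; since that weight is pinched between $e^{\alpha_M}$ and $e^{\beta_M}$ by Proposition \ref{P2.2}, the two inequalities follow immediately with $C_1=e^{\alpha_M}$ and $C_2=e^{\beta_M}$.

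Concretely, I would first record the classical identity: with $h(s):=\int_M\phi\,\omega_{s\phi}^n$ one has $h'(s)=-n\int_M\sqrt{-1}\partial\phi\wedge\bar\partial\phi\wedge\omega_{s\phi}^{n-1}$ by integration by parts, and since $I(\phi)=h(0)-h(1)$ while $J(\phi)=h(0)-\int_0^1 h(s)\,ds$, Fubini gives
$$
I(\phi)-J(\phi)=\int_0^1\!\bigl(h(s)-h(1)\bigr)\,ds=n\int_0^1 s\!\int_M\sqrt{-1}\partial\phi\wedge\bar\partial\phi\wedge\omega_{s\phi}^{n-1}\,ds .
$$
For the twisted functionals I set $g(s):=\int_M\phi\,e^{\theta_M+sV(\phi)}\omega_{s\phi}^n$, so that in exactly the same way $I_V(\phi)-J_V(\phi)=\int_0^1(g(s)-g(1))\,ds$. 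Differentiating $g$ in $s$ produces a term $\int_M\phi\,V(\phi)\,e^{\theta_M+sV(\phi)}\omega_{s\phi}^n$ coming from the exponent, together with $n\int_M\phi\,e^{\theta_M+sV(\phi)}\sqrt{-1}\partial\bar\partial\phi\wedge\omega_{s\phi}^{n-1}$; integrating the latter by parts, and using that $V(\phi)$ is real-valued because $\phi$ is $T$-invariant together with the contraction identity $i_{\bar V}\omega_{s\phi}=-\sqrt{-1}\partial\bigl(\theta_M+sV(\phi)\bigr)$ from the proof of Proposition \ref{P4.5}, a bidegree count shows $i_{\bar V}\omega_{s\phi}\wedge\bar\partial\phi\wedge\omega_{s\phi}^{n-1}=-\tfrac1n V(\phi)\,\omega_{s\phi}^n$, so the two $V(\phi)$-contributions cancel and $g'(s)=-n\int_M e^{\theta_M+sV(\phi)}\sqrt{-1}\partial\phi\wedge\bar\partial\phi\wedge\omega_{s\phi}^{n-1}$. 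Hence the same Fubini argument yields
$$
I_V(\phi)-J_V(\phi)=n\int_0^1 s\!\int_M e^{\theta_M+sV(\phi)}\,\sqrt{-1}\partial\phi\wedge\bar\partial\phi\wedge\omega_{s\phi}^{n-1}\,ds .
$$

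To conclude, $\theta_M+sV(\phi)$ is the Hamiltonian potential of $V$ with respect to $\omega_{s\phi}$ and one checks $\int_M e^{\theta_M+sV(\phi)}\omega_{s\phi}^n=1$ for all $s$ (the same type of integration by parts), so it is normalized just as $\theta_M$; by Proposition \ref{P2.2}, whose content is precisely that these extremal values do not depend on the chosen metric in the class, $\alpha_M\le\theta_M+sV(\phi)\le\beta_M$ pointwise for all $s\in[0,1]$. Since the $(n,n)$-forms $\sqrt{-1}\partial\phi\wedge\bar\partial\phi\wedge\omega_{s\phi}^{n-1}$ are $\ge0$, multiplying the integrand of the formula for $I(\phi)-J(\phi)$ by this weight and integrating gives
$$
e^{\alpha_M}\bigl(I(\phi)-J(\phi)\bigr)\ \le\ I_V(\phi)-J_V(\phi)\ \le\ e^{\beta_M}\bigl(I(\phi)-J(\phi)\bigr),
$$
which is the lemma. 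The one genuine computation is the formula for $g'(s)$: one must track the $(p,q)$-bidegrees carefully to get the contraction term $-\tfrac1n V(\phi)\,\omega_{s\phi}^n$ right, since that is exactly what makes the $V(\phi)$-terms cancel; everything else is Fubini and the pinching of $\theta_M+sV(\phi)$.
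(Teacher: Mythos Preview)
Your proposal is correct and takes essentially the same approach as the paper. The paper proceeds by citing Lemma 3.3 of \cite{TZ00} for the derivative formula $\frac{d}{ds}(I_V(\phi_s)-J_V(\phi_s))=s\int_M|\partial\phi|^2_{\omega_{s\phi}}e^{\theta_M+sV(\phi)}\omega_{s\phi}^n$ along the path $\phi_s=s\phi$ (and the analogous untwisted formula), then bounds the weight by $e^{\alpha_M}$ and $e^{\beta_M}$ and integrates; your Fubini argument with $g(s)$ simply rederives that same derivative formula in a self-contained way, arriving at the identical integral representation $I_V(\phi)-J_V(\phi)=n\int_0^1 s\int_M e^{\theta_M+sV(\phi)}\sqrt{-1}\partial\phi\wedge\bar\partial\phi\wedge\omega_{s\phi}^{n-1}\,ds$ and the same constants $C_1=e^{\alpha_M}$, $C_2=e^{\beta_M}$.
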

\begin{proof}
Take a path $\phi_s=s\phi$, then by Lemma 3.3 \cite{TZ00}, we have
$$
\frac{d}{ds}(I_V(\phi_s)-J_V(\phi_s))=s\int_M|\partial\phi|^2_{\omega_{s\phi}}e^{\theta_M+s\cdot V(\phi)}\omega^n_{s\phi}.
$$
We also know that
$$
\frac{d}{ds}(I(\phi_s)-J(\phi_s))=s\int_M|\partial\phi|^2_{\omega_{s\phi}}\omega^n_{s\phi}.
$$
Thus
$$
e^{\alpha_M}\frac{d}{ds}(I(\phi_s)-J(\phi_s))\leq \frac{d}{ds}(I_V(\phi_s)-J_V(\phi_s))\leq e^{\beta_M}\frac{d}{ds}(I(\phi_s)-J(\phi_s)).
$$
So we obtain this lemma.
\end{proof}
\begin{lemma}\label{L4.5}
For $\lambda\in[\lambda_1,\lambda_2]$ and $1-m^{-1}<\lambda_1<\lambda_2<1$, if $\phi_{t,\lambda}$ are twisted K\"{a}hler-Ricci solitons for $t\in \Delta^*$ and $I_{V_t}(\phi_{t,\lambda})$ is uniformly bounded for $t$ and $\lambda$, then there is a uniform constant $C$ such that
$$
||\phi_{t,\lambda}||_{L^\infty}\leq C
$$
for $t\in \Delta^*$ and $\lambda\in[\lambda_1,\lambda_2]$.
\end{lemma}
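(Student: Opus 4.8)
The plan is to normalize, convert the hypothesis into $L^1$-type integral bounds, upgrade these to a uniform $L^p$-bound for the Monge--Amp\`ere density, and conclude with Ko\l odziej's $L^\infty$-estimate. Write $\phi=\phi_{t,\lambda}$. Both the soliton equation of Definition~\ref{D3.1} (equivalently the Monge--Amp\`ere equation at the beginning of \S4) and the functional $I_{V_t}$ are unchanged under $\phi\mapsto\phi+c$, so I may normalize $\sup_{M_t}\phi_{t,\lambda}=0$ for all $t\in\Delta^*$ and $\lambda\in[\lambda_1,\lambda_2]$; then $\|\phi_{t,\lambda}\|_{L^\infty}=\osc_{M_t}\phi_{t,\lambda}=-\inf_{M_t}\phi_{t,\lambda}$ and it suffices to bound the oscillation uniformly.

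\emph{From the $I_{V_t}$-bound to integral bounds.} By the smooth analogue of Proposition~\ref{P2.2}, $I_{V_t}(\phi)$ is comparable to $I_{V_t}(\phi)-J_{V_t}(\phi)$ with constants governed by $n,\alpha_{M_t},\beta_{M_t}$, which are uniform by Lemma~\ref{L4.1}; together with Lemma~\ref{L4.4} and the elementary comparison $\frac{1}{n+1}I\le I-J\le\frac{n}{n+1}I$, the hypothesis that $I_{V_t}(\phi_{t,\lambda})$ is uniformly bounded yields a uniform bound on the classical Aubin functional $I(\phi_{t,\lambda})$. Combining this with $\sup_{M_t}\phi=0$ and the Hartogs-type inequality $\sup_{M_t}\phi\le\int_{M_t}\phi\,\omega_t^n+A$ (uniform along the smooth family, cf.\ the proof of Lemma~\ref{L3.4}) gives uniform bounds $\int_{M_t}(-\phi_{t,\lambda})\,\omega_t^n\le C$ and $\int_{M_t}(-\phi_{t,\lambda})\,\omega_{t,\phi_{t,\lambda}}^n\le C$. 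Running the soliton equation, which identifies $e^{\theta_{M_t}+V_t(\phi)}\omega_{t,\phi}^n$ with the probability measure $e^{-r(\lambda)\phi}\Omega_t\big/\!\int_{M_t}e^{-r(\lambda)\phi}\Omega_t$, and using $|V_t(\phi)|\le C$ (smooth version of Lemma~\ref{L2.1}) together with $\alpha_{M_t}\le\theta_{M_t}\le\beta_{M_t}$, one deduces
$$
\int_{M_t}(-\phi_{t,\lambda})\,e^{-r(\lambda)\phi_{t,\lambda}}\Omega_t\ \le\ C\int_{M_t}e^{-r(\lambda)\phi_{t,\lambda}}\Omega_t ,
$$
while $\phi_{t,\lambda}\le 0$ forces $\int_{M_t}e^{-r(\lambda)\phi_{t,\lambda}}\Omega_t\ge\Vol(M_t)=(-K_{M_0})^n>0$; all constants are uniform in $t$ and $\lambda$.

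\emph{Uniform $L^p$-bound for the density.} Rewrite the equation as $\omega_{t,\phi}^n=F_{t,\lambda}\,\omega_t^n$ with $F_{t,\lambda}=e^{-\theta_{M_t}-V_t(\phi_{t,\lambda})}\,\frac{\Omega_t}{\omega_t^n}\,\frac{e^{-r(\lambda)\phi_{t,\lambda}}}{\int_{M_t}e^{-r(\lambda)\phi_{t,\lambda}}\Omega_t}$, the first two factors being uniformly bounded above. By Lemma~\ref{L4.2}, $\alpha_{\Omega_t}(\omega_t)>l$ with $l$ depending only on the (constant) volume, so $\int_{M_t}e^{-\sigma\varphi}\Omega_t\le C_\sigma$ uniformly over all $\omega_t$-psh $\varphi$ with $\sup\varphi=0$ and all $\sigma<l$. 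Choosing $p>1$ with $p\,r(\lambda_2)<l$ and using the lower bound on the normalizing integral from the previous step gives $\|F_{t,\lambda}\|_{L^p(M_t,\omega_t^n)}\le C$ uniformly in $t\in\Delta^*$ and $\lambda\in[\lambda_1,\lambda_2]$; should $p\,r(\lambda_2)\ge l$, one instead feeds the exponential integrability available for $\sigma<l$ together with the $L^1$-bound for $(-\phi_{t,\lambda})$ against $e^{-r(\lambda)\phi_{t,\lambda}}\Omega_t\big/\!\int_{M_t}e^{-r(\lambda)\phi_{t,\lambda}}\Omega_t$ from the previous step directly into Ko\l odziej's iteration.

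\emph{Ko\l odziej's estimate and the main obstacle.} Finally apply the Ko\l odziej / Eyssidieux--Guedj--Zeriahi a priori estimate: an $\omega$-psh solution of $\omega_\phi^n=F\omega^n$ with $\|F\|_{L^p}\le A$, $p>1$, satisfies $\osc\phi\le C(n,p,A,\omega)$; with the normalization this gives $\|\phi_{t,\lambda}\|_{L^\infty}\le C$. The genuine difficulty is uniformity as $t\to 0$: the Skoda-type exponential integrability of $\omega_t$-psh functions, the volume--capacity comparison, and the stability constant entering Ko\l odziej's estimate must not deteriorate as the smooth fibers $M_t\subset\mathbb{CP}^N$ degenerate to the singular central fiber. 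This is secured because the family has bounded degree and $(M_t,\omega_t)\to(M_0,\omega_0)$ with uniform geometry away from the singular set, and because the bound of Lemma~\ref{L4.2} depends only on the fixed volume; moreover it is exactly the tension between the range of $r(\lambda)$ on $[\lambda_1,\lambda_2]$ and the exponent available from the $\alpha$-invariant that makes the $I_{V_t}$-hypothesis indispensable in the first two steps.
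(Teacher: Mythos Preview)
Your approach is genuinely different from the paper's, and as written it has real gaps. The paper does \emph{not} go through Ko\l odziej at all. It uses a two--sided Green function argument: the Green function of the ambient Fubini--Study metric on $\mathbb{CP}^N$, restricted to each $M_t$, together with $\triangle_{\omega_t}\phi_{t,\lambda}>-n$, controls $\sup\phi_{t,\lambda}-\frac{1}{a_t}\int\phi_{t,\lambda}\omega_t^n$; Mabuchi's lower bound on the Green function of the weighted operator $\Re(\triangle_{\omega_{t,\lambda}}+V_t)$ (valid because $\Ric(\omega_{t,\lambda})-\sqrt{-1}\partial\bar\partial\theta(\phi_{t,\lambda})>r(\lambda)\omega_{t,\lambda}$), together with $D_{t,\lambda}\phi_{t,\lambda}\le n+C$, controls $\inf\phi_{t,\lambda}-\frac{1}{a_t}\int\phi_{t,\lambda}\omega_{t,\lambda}^n$. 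Subtracting gives $\osc\phi_{t,\lambda}\le C+\frac{1}{a_t}\int\phi_{t,\lambda}(\omega_t^n-\omega_{t,\lambda}^n)$, which is exactly $I(\phi_{t,\lambda})$ up to the volume factor and is controlled by $I_{V_t}(\phi_{t,\lambda})$ via Lemma~\ref{L4.4} and Proposition~\ref{P2.2}. All constants are manifestly uniform in $t$: the ambient Green function is fixed, and Mabuchi's bound depends only on $r(\lambda_1)$.

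Your route, by contrast, has three soft spots that are not easily repaired. First, you assert that $\Omega_t/\omega_t^n$ is uniformly bounded above; in a $\mathbb{Q}$-Gorenstein smoothing this ratio typically blows up as $t\to 0$ near the preimage of the singular set of $M_0$ (this is precisely why Lemma~\ref{L4.6} only gets $\Omega_t/\omega_t^n\le e^{C_K'}$ on compact sets away from $S$), so your claimed $L^p(\omega_t^n)$ bound on $F_{t,\lambda}$ does not follow. Second, the fallback ``should $p\,r(\lambda_2)\ge l$, feed the $L^1$-bound into Ko\l odziej's iteration'' is not an argument: Ko\l odziej's estimate needs an $L^p$ (or at least an Orlicz) bound on the density with $p>1$, and a bound on $\int(-\phi)\,d\mu_\phi$ does not supply this when $r(\lambda_2)$ exceeds the uniform $\alpha$-invariant threshold. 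Third, even granting an $L^p$ density bound, the uniformity in $t$ of the Ko\l odziej/EGZ constant as the smooth fibers degenerate is itself a nontrivial theorem (volume--capacity comparison, Skoda constants, etc.\ in families), which you only assert. The paper's Green function argument sidesteps all three issues at once; if you want to salvage your approach you would have to invoke a uniform-in-families Ko\l odziej estimate with the adapted measure $\Omega_t$ as reference, and that is substantially more work than what is written.
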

\begin{proof}
Let $G(\cdot,\cdot)$ be the Green function of $\frac{1}{m}\omega_{FS}$ for Laplacian operator $\triangle_{\frac{1}{m}\omega_{FS}}$ on $\mathbb{CP}^N$, then there exists a constant $C_1>0$ such that $G(\cdot,\cdot)\geq -C_1$. Denote $G_t(\cdot,\cdot)$ by $G(\cdot,\cdot)|_{M_t\times M_t}$. By the inequality $\triangle_{\omega_t}\phi_{t,\lambda}>-n$, we have
$$
r(\lambda)\Big(\sup_{M_t}\phi_{t,\lambda}-\frac{1}{a_t}\int_{M_t}\phi_{t,\lambda}\omega_t^n\Big)\leq \frac{r(\lambda)}{a_t}\cdot n\cdot \int_{M_t}(G_t(\cdot,\cdot)+C_1)\omega_t^n\leq C_2
$$
where $a_t=\int_{M_t}\omega_t^n$.

Next, set $\omega_{t,\lambda}:=\omega_{\phi_{t,\lambda}}$. Since $\Ric(\omega_{t,\lambda})-\sqrt{-1}\partial\bar{\partial}(\theta_{M_t}+V_t(\phi_{t,\lambda}))>r(\lambda)\omega_{t,\lambda}$,
Theorem B \cite{M2} implies that
$$
G_{\omega_{t,\lambda}}(x,y)\geq -C_3
$$
where $G_{\omega_{t,\lambda}}(\cdot,\cdot)$ is the Green function of $\omega_{t,\lambda}$ for operator $\Re D_{t,\lambda}$ ($D_{t,\lambda}:=\triangle_{\omega_{t,\lambda}}+V_t$). By the inequality $D_{t,\lambda}(\phi_{t,\lambda})\leq n+C$, we have
$$
r(\lambda)\Big(\inf_{M_t}\phi_{t,\lambda}-\frac{1}{a_t}\int_{M_t}\phi_{t,\lambda}\omega_{t,\lambda}^n\Big)\geq -C_4.
$$
Therefore,
$$
\osc \phi_{t,\lambda}:=\sup_{M_t}\phi_{t,\lambda}-\inf_{M_t}\phi_{t,\lambda}\leq C+\frac{1}{a_t}\int_{M_t}\phi_{t,\lambda}(\omega_t^n-\omega^n_{t,\lambda}).
$$
This lemma is followed by the boundedness of $I_{V_t}(\phi_{t,\lambda})$ and Lemma \ref{L4.4}.
\end{proof}
\begin{remark}
If $I(\phi_{t,\lambda})$ is uniformly bounded for $t$ and $\lambda$, then the above lemma can also be deduced.
\end{remark}
Let $K_0$ be a compact subset of $M_0\backslash S$, then we construct a compact subset $K:=\bigcup_{t\in \Delta^*}G_t(K_0)$ on $\mathcal{M}|_{\Delta^*}$, where $G_t$ is explained at the beginning of subsection 4.1. Then we have
\begin{lemma}\label{L4.6}
For $\lambda\in[\lambda_1,\lambda_2]$ and $1-m^{-1}<\lambda_1<\lambda_2<1$, suppose that $\phi_{t,\lambda}$ are twisted K\"{a}hler-Ricci solitons for $t\in \Delta^*$ and $\phi_{t,\lambda}$ is uniformly bounded for $t$ and $\lambda$. If $K$ is a compact subset of $\mathcal{M}|_{\Delta^*}$ as above, then for each $k\geq 2$, we have $||\phi_{t,\lambda}||_{C^k(K\cap M_t)}\leq C$, where $C$ is a positive constant only depending on $\lambda_1$, $\lambda_2$, $K$, $k$ and $||\phi_{t,\lambda}||_{L^\infty}$.
\end{lemma}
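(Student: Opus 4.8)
\emph{Proof proposal.} This is the local, family version of the regularity proposition proved above for a fixed smooth Fano manifold: the plan is to run, on the compact region $G_t(K_0')$ for a slightly enlarged compact $K_0'\subset M_0\setminus S$, the usual bootstrap for the twisted soliton equation --- uniform bound on the drift term, interior Laplacian estimate, complex Evans-Krylov, Schauder iteration --- keeping all constants uniform in $t$ via the smooth convergence of the geometric data on $M_0\setminus S$ and uniform in $\lambda$ via $r(\lambda)\ge r(\lambda_1)>0$.

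First I would fix nested compact sets $K_0\subset\mathrm{int}\,K_0''$, $K_0''\subset\mathrm{int}\,K_0'$, $K_0'\subset M_0\setminus S$ (with $K_0$ the given set), shrink $\Delta$ so that $G$ is defined on $K_0'\times\Delta$, and work on $M_t$ with cutoffs supported in $G_t(\mathrm{int}\,K_0'')$. By Lemma \ref{L4.1} and its proof, $\omega_t$, $\omega_{FS,t}$, $\Omega_t$, $J_t$, $\theta_{M_t}$ and $V_t$, transported by $G_t$, are uniformly bounded in every $C^k$-norm on $K_0'$ and $\omega_t$ is uniformly comparable there to a fixed background metric, uniformly for $|t|$ small and $\lambda\in[\lambda_1,\lambda_2]$. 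Since $1-m^{-1}<\lambda_1\le\lambda\le\lambda_2<1$, we have $0<r(\lambda_1)\le r(\lambda)\le r(\lambda_2)<1$, and by hypothesis $\|\phi_{t,\lambda}\|_{L^\infty}\le C$, so $\int_{M_t}e^{-r(\lambda)\phi_{t,\lambda}}\Omega_t$ is pinched between two positive constants --- here the compact $\lambda$-range is used. By Lemma 5.1 and Corollary 5.3 of \cite{Zhu00}, applied on $M_t$ with constants uniform in $t$ because the $T_t$-equivariant embeddings and the torus actions converge (exactly the argument of Lemma \ref{L2.1}), one also has $|V_t(\phi_{t,\lambda})|\le C$ uniformly. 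Hence, writing the soliton equation as $\omega_{\phi_{t,\lambda}}^n=e^{F_{t,\lambda}}\omega_t^n$ with
\[
F_{t,\lambda}:=-\theta_{M_t}-V_t(\phi_{t,\lambda})-r(\lambda)\phi_{t,\lambda}+\log\frac{\Omega_t}{\omega_t^n}-\log\!\int_{M_t}e^{-r(\lambda)\phi_{t,\lambda}}\Omega_t,
\]
we get $\|F_{t,\lambda}\|_{L^\infty(G_t(K_0'))}\le C$, uniformly in $t$ and $\lambda$.

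The main step is the interior Laplacian estimate on $K\cap M_t=G_t(K_0)$. Using $\omega_{FS,t}=m\,\omega_t$ and $(1-\lambda)m=1-r(\lambda)$, rewrite the equation as $\Ric(\omega_{\phi_{t,\lambda}})-L_{V_t}\omega_{\phi_{t,\lambda}}=(1-r(\lambda))\,\omega_t+r(\lambda)\,\omega_{\phi_{t,\lambda}}$, so the drift (Bakry-\'Emery) Ricci form of $\omega_{\phi_{t,\lambda}}$ along $V_t$ is $\ge r(\lambda)\,\omega_{\phi_{t,\lambda}}\ge r(\lambda_1)\,\omega_{\phi_{t,\lambda}}>0$. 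Setting $u:=\tr_{\omega_t}\omega_{\phi_{t,\lambda}}=n+\Delta_{\omega_t}\phi_{t,\lambda}$, the Aubin-Yau computation reorganizes so that the first-order contributions of $V_t(\phi_{t,\lambda})$ arising from $\Delta\big(V_t(\phi_{t,\lambda})\big)$ combine into $V_t(\log u)$ plus terms controlled by $\|F_{t,\lambda}\|_{L^\infty}$ and the geometry; the natural operator is therefore the drift Laplacian $\Re D_{t,\lambda}$ with $D_{t,\lambda}:=\Delta_{\omega_{\phi_{t,\lambda}}}+V_t$, and the twisted Ricci positivity supplies a good constant, giving an inequality of the shape $\Re D_{t,\lambda}\log u\ge -C-C\,\tr_{\omega_{\phi_{t,\lambda}}}\omega_t$ with $C$ uniform. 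Applying the maximum principle for the elliptic operator $\Re D_{t,\lambda}$ to $\log u-A\phi_{t,\lambda}$ against a fixed cutoff equal to $1$ on $K\cap M_t$ and supported in $G_t(\mathrm{int}\,K_0'')$, with $A$ large and using $\Re D_{t,\lambda}\phi_{t,\lambda}=n-\tr_{\omega_{\phi_{t,\lambda}}}\omega_t+\Re\big(V_t(\phi_{t,\lambda})\big)$ together with $|V_t(\phi_{t,\lambda})|\le C$, yields $u\le C$ on $K\cap M_t$; this is the local, family version of Proposition 6.1 \cite{Zhu00} (compare also the estimates in \cite{DS}). Together with $\|F_{t,\lambda}\|_{L^\infty}\le C$ and the fact that the product of the eigenvalues of $\omega_{\phi_{t,\lambda}}$ relative to $\omega_t$ equals $e^{F_{t,\lambda}}$, this gives the two-sided bound $C^{-1}\omega_t\le\omega_{\phi_{t,\lambda}}\le C\,\omega_t$ on $K\cap M_t$.

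With this uniform ellipticity and $\|F_{t,\lambda}\|_{L^\infty}\le C$ in hand, I would finish by bootstrapping: since $\Delta_{\omega_t}\phi_{t,\lambda}=u-n$ is uniformly bounded, $\phi_{t,\lambda}$ is uniformly bounded in $W^{2,p}_{loc}$ for every $p$, hence in $C^{1,\alpha}$, so $F_{t,\lambda}\in C^\alpha$; the complex Evans-Krylov theorem then gives a uniform interior $C^{2,\alpha}$-bound, whence $F_{t,\lambda}\in C^{1,\alpha}$, and differentiating the Monge-Amp\`ere equation and iterating Schauder estimates upgrades this to $\|\phi_{t,\lambda}\|_{C^{k,\alpha}(K\cap M_t)}\le C(\lambda_1,\lambda_2,K,k,\|\phi_{t,\lambda}\|_{L^\infty})$ for every $k$, the fixed buffer $K_0\subset\mathrm{int}\,K_0''$ providing the room for the finitely many interior steps needed at each $k$. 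The hard part is the local second-order estimate: the first-order term $V_t(\phi_{t,\lambda})$ in the equation would naively produce an uncontrolled third-order term, and one must exploit both its reorganization into the drift Laplacian and the positivity $(1-r(\lambda))\omega_t+r(\lambda)\omega_{\phi_{t,\lambda}}>0$ of the twisted Ricci form, while carrying a cutoff through the argument and keeping every constant uniform as $t\to0$ (via the smooth convergence of the data on $M_0\setminus S$, Lemma \ref{L4.1}) and over $\lambda$ (via $r(\lambda)\ge r(\lambda_1)>0$).
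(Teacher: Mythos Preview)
Your outline is correct, and the overall architecture --- uniform $C^0$ control of the drift and of the right-hand side $F_{t,\lambda}$, a second-order estimate, then Evans--Krylov and Schauder bootstrap --- matches the paper's. The substantive difference is in how the Laplacian estimate is obtained, specifically how the third-order term $V_t^l(\phi_{t,\lambda})_{i\bar i l}$ coming from $\Delta_{\omega_t}\bigl(V_t(\phi_{t,\lambda})\bigr)$ is handled.

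You absorb that term by passing to the drift operator $\Re D_{t,\lambda}=\Delta_{\omega_{\phi_{t,\lambda}}}+\Re V_t$: since $V_t^l(\phi_{t,\lambda})_{i\bar i l}=V_t(u)$ modulo curvature, division by $u$ produces exactly $V_t(\log u)$, which is the extra piece in $\Re D_{t,\lambda}(\log u)$; you then run an \emph{interior} maximum principle with a cutoff supported in $G_t(K_0'')$. The paper does neither: it keeps the ordinary Laplacian $\Delta_{\omega_{t,\lambda}}$ and applies the \emph{global} maximum principle on the compact smooth manifold $M_t$, with no cutoff at all. At the global maximum $x_0$ of $H=\log u-(C_2+1)\phi_{t,\lambda}$ the gradient condition $\nabla H(x_0)=0$ gives $u_{,l}=(C_2+1)\,u\,(\phi_{t,\lambda})_l$, so that $V_t^l(\phi_{t,\lambda})_{i\bar i l}=V_t(u)+\text{curv.}=(C_2+1)\,V_t(\phi_{t,\lambda})\,u+\text{curv.}\le C\,u$, using only $|V_t(\phi_{t,\lambda})|\le C$. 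This is precisely Zhu's trick (Proposition~6.1 of \cite{Zhu00}); the locality in the paper's argument enters only at the very last step, through the bound on $\log(\Omega_t/\omega_t^n)$. Your route is more self-contained --- it needs $\Omega_t/\omega_t^n$ bounded only on the enlarged compact $G_t(K_0')$, which is immediate from the smooth convergence in Lemma~\ref{L4.1} --- but you pay by having to carry a cutoff through the Aubin--Yau computation and absorb its gradient with the good third-order terms. The paper's route is shorter but hinges on the pointwise identity available at a global maximum.
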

\begin{proof}
By a direct calculation, we have
$$
\triangle_{\omega_{t,\lambda}}\log tr_{\omega_t}\omega_{t,\lambda}\geq -\frac{tr_{\omega_t}(\Ric(\omega_{t,\lambda}))}{tr_{\omega_t}\omega_{t,\lambda}}-C_1\cdot tr_{\omega_{t,\lambda}}\omega_t,
$$
where $C_1$ is a constant of the lower bound for the holomorphic bisectional curvature of $\omega_t$. Note that, by the definition of $\phi_{t,\lambda}$,
$$
-\Ric(\omega_{t,\lambda})+\sqrt{-1}\partial\bar{\partial}\theta(\phi_{t,\lambda})=-r(\lambda)\omega_{t,\lambda}-(1-\lambda)\omega_{FS}
$$
where $\theta(\phi_{t,\lambda}):=\theta_{M_t}+V_t(\phi_{t,\lambda})$. Applying the inequality $n\leq (tr_{\omega_t}\omega_{t,\lambda})\cdot(tr_{\omega_{t,\lambda}}\omega_t)$, we get
$$
\triangle_{\omega_{t,\lambda}}\log tr_{\omega_t}\omega_{t,\lambda}\geq -C_2\cdot tr_{\omega_{t,\lambda}}\omega_t-C_3-\frac{\triangle_{\omega_t}\theta(\phi_{t,\lambda})}{tr_{\omega_t}\omega_{t,\lambda}}.
$$
Set $H=\log tr_{\omega_t}\omega_{t,\lambda}-(C_2+1)\phi_{t,\lambda}$, so we have
$$
\triangle_{\omega_{t,\lambda}} H\geq tr_{\omega_{t,\lambda}}\omega_t-C_4-\frac{\triangle_{\omega_t}\theta(\phi_{t,\lambda})}{tr_{\omega_t}\omega_{t,\lambda}}.
$$
Assume that the function $H$ achieves its maximum at some point $x_0$, then at this point
\begin{equation}\label{e4.2}
\nabla\big(e^{-(C_2+1)\phi_{t,\lambda}}(n+\triangle_{\omega_t}\phi_{t,\lambda})\big)=0.
\end{equation}
At $x_0$, we choose the normal coordinate so that $g_{t,i\bar{j}}=\delta_{ij}$ and $(\phi_{t,\lambda})_{i\bar{j}}=\delta_{ij}\cdot (\phi_{t,\lambda})_{i\bar{i}}$. Therefore, (\ref{e4.2}) gives
$$
\big(n+(\phi_{t,\lambda})_{i\bar{i}}\big)_l-\big[(C_2+1)(\phi_{t,\lambda})_l\big](n+\triangle_{\omega_t}\phi_{t,\lambda})=0,
$$
which yields
$$
V_t^l\cdot (\phi_{t,\lambda})_{i\bar{i}l}=V_t^l\cdot \big[(C_2+1)(\phi_{t,\lambda})_l\big](n+\triangle_{\omega_t}\phi_{t,\lambda})=(C_2+1)\cdot V_t(\phi_{t,\lambda})(n+\triangle_{\omega_t}\phi_{t,\lambda}).
$$
Lemma 5.1 \cite{Zhu00} and Corollary 5.3 \cite{Zhu00} imply that $|V_t(\phi_{t,\lambda})|\leq C_5$, so
$$
V_t^l\cdot (\phi_{t,\lambda})_{i\bar{i}l}\leq C_6\cdot(n+\triangle_{\omega_t}\phi_{t,\lambda}).
$$
At point $x_0$, we also have
\begin{align*}
\triangle_{\omega_t}\theta(\phi_{t,\lambda})&= \theta(\phi_{t,\lambda})_{i\bar{i}}=\big[V_t^l\cdot(g_{t,l\bar{i}}+(\phi_{t,\lambda})_{l\bar{i}})\big]_i\\
&=V_t^l\cdot g_{t,l\bar{i}i}+V_t^l\cdot (\phi_{t,\lambda})_{l\bar{i}i}+V_{t,i}^l\cdot (g_{t,l\bar{i}}+(\phi_{t,\lambda})_{l\bar{i}})\\
&\leq C_6\cdot(n+\triangle_{\omega_t}\phi_{t,\lambda})+\sup_{M_t}|V_{t,i}^l|\cdot(n+\triangle_{\omega_t}\phi_{t,\lambda})\\
& \leq C_7\cdot (n+\triangle_{\omega_t}\phi_{t,\lambda})
\end{align*}
Thus we obtain
$$
tr_{\omega_{t,\lambda}}\omega_t(x_0)\leq C_8.
$$
The inequality
\begin{equation}\label{e4.3}
tr_{\omega_t}\omega_{t,\lambda}\leq n\cdot \frac{\omega^n_{t,\lambda}}{\omega^n_t}\cdot (tr_{\omega_{t,\lambda}}\omega_t)^{n-1}
\end{equation}
gives
$$
\log tr_{\omega_t}\omega_{t,\lambda}\leq \log n-\theta(\phi_{t,\lambda})-r(\lambda)\phi_{t,\lambda}+\log\frac{\Omega_t}{\omega^n_t}+(n-1)\log tr_{\omega_{t,\lambda}}\omega_t.
$$
Therefore, the boundedness of $\theta(\phi_{t,\lambda})$ and $\phi_{t,\lambda}$ imply
$$
H\leq H(x_0)\leq C_9+\log\frac{\Omega_t}{\omega^n_t}.
$$
Note that there exists a constant $C'_K$ such that $\frac{\Omega_t}{\omega^n_t}\leq e^{C'_K}$ on $K\cap M_t$, so we have $tr_{\omega_t}\omega_{t,\lambda}\leq C_K$. Using (\ref{e4.3}) again, we get $tr_{\omega_{t,\lambda}}\omega_t\leq\overline{C_K}$. This lemma holds due to the standard Evans-Krylov theory \cite{EV} \cite{K} for the complex Monge-Amp\`{e}re equation.
\end{proof}
The next lemma illustrates that the functional $I$ is continuous under the above continuity of K\"{a}hler potentials (c.f. Lemma 2.14 \cite{SSY}).
\begin{lemma}\label{L4.7}
Suppose that $\phi_{t,\lambda}$ are twisted K\"{a}hler-Ricci solitons and $\phi_{t,\lambda}$ is uniformly bounded. If $\phi_{t,\lambda}\circ G_t$ converges to $\phi_{0,\lambda}$ in the $C^2$ sense on any compact subset away from $S$ on $M_0$, then we have
$$
\lim_{t\rightarrow 0}I(\phi_{t,\lambda})=I(\phi_{0,\lambda}).
$$
\end{lemma}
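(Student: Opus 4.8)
The plan is to mimic the proof of Proposition \ref{P4.4}: split $M_t$ into a part staying away from the singular set $S$ and a shrinking neighbourhood of $S$, and exploit three facts simultaneously — the uniform bound $\|\phi_{t,\lambda}\|_{L^\infty}\le C$, the $C^2$-convergence $\phi_{t,\lambda}\circ G_t\to\phi_{0,\lambda}$ on compacta of $M_0\setminus S$ together with the $C^\infty$-convergence $G_t^*\omega_t\to\omega_0$ from the setup of Lemma \ref{L4.1}, and the cohomological identities $\int_{M_t}\omega_t^n=\int_{M_t}\omega_{\phi_{t,\lambda}}^n=a_t$ with $a_t\to a_0$. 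Since $I(\phi)=\int_M\phi(\omega^n-\omega_\phi^n)$, it suffices to prove $\int_{M_t}\phi_{t,\lambda}\,\omega_t^n\to\int_{M_0}\phi_{0,\lambda}\,\omega_0^n$ and $\int_{M_t}\phi_{t,\lambda}\,\omega_{\phi_{t,\lambda}}^n\to\int_{M_0}\phi_{0,\lambda}\,\omega_{\phi_{0,\lambda}}^n$.

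Fix $\delta>0$. Since $\phi_{0,\lambda}$ has full Monge--Amp\`ere mass in the sense of Definition \ref{D3.1}, $\int_{M_0}\omega_{\phi_{0,\lambda}}^n=a_0$, and neither $\omega_0^n$ nor $\omega_{\phi_{0,\lambda}}^n$ charges the analytic set $S$; hence I can choose an open $U_0^\delta\subset M_0\setminus S$ with $\int_{M_0\setminus U_0^\delta}\omega_0^n<\delta$ and $\int_{M_0\setminus U_0^\delta}\omega_{\phi_{0,\lambda}}^n<\delta$, taking $U_0^\delta$ to be a sublevel set of the distance to $S$ at a regular value that is moreover not a positive-mass level of the finite measure $\omega_{\phi_{0,\lambda}}^n$, so that $\partial U_0^\delta$ is smooth and $\omega_{\phi_{0,\lambda}}^n(\partial U_0^\delta)=0$. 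Extend $U_0^\delta$ to a smooth family $U_t^\delta\subset M_t$ through the embedding $G$ as in Proposition \ref{P4.4}. On $\overline{U_0^\delta}$ the $C^2$-convergence of the potentials and the $C^\infty$-convergence of $G_t^*\omega_t$ give, via Bedford--Taylor \cite{BT}, that $G_t^*(\omega_t^n)\to\omega_0^n$ and $G_t^*(\omega_{\phi_{t,\lambda}}^n)\to\omega_{\phi_{0,\lambda}}^n$ weakly there, and (using $\omega_{\phi_{0,\lambda}}^n(\partial U_0^\delta)=0$ and the evident analogue for $\omega_0^n$) one obtains
\[
\int_{U_t^\delta}\phi_{t,\lambda}\,\omega_t^n\to\int_{U_0^\delta}\phi_{0,\lambda}\,\omega_0^n,\qquad
\int_{U_t^\delta}\phi_{t,\lambda}\,\omega_{\phi_{t,\lambda}}^n\to\int_{U_0^\delta}\phi_{0,\lambda}\,\omega_{\phi_{0,\lambda}}^n,
\]
as well as $\int_{U_t^\delta}\omega_t^n\to\int_{U_0^\delta}\omega_0^n$ and $\int_{U_t^\delta}\omega_{\phi_{t,\lambda}}^n\to\int_{U_0^\delta}\omega_{\phi_{0,\lambda}}^n$.

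For the complement, the uniform bound $\|\phi_{t,\lambda}\|_{L^\infty}\le C$ together with $\int_{M_t}\omega_t^n=\int_{M_t}\omega_{\phi_{t,\lambda}}^n=a_t$ yields
\[
\Big|\int_{M_t\setminus U_t^\delta}\phi_{t,\lambda}\,\omega_t^n\Big|\le C\Big(a_t-\int_{U_t^\delta}\omega_t^n\Big),\qquad
\Big|\int_{M_t\setminus U_t^\delta}\phi_{t,\lambda}\,\omega_{\phi_{t,\lambda}}^n\Big|\le C\Big(a_t-\int_{U_t^\delta}\omega_{\phi_{t,\lambda}}^n\Big).
\]
Letting $t\to0$ and using $a_t\to a_0$ with the mass convergences just recorded, the $\limsup$ of each right-hand side is at most $C\int_{M_0\setminus U_0^\delta}\omega_0^n<C\delta$, respectively $C\int_{M_0\setminus U_0^\delta}\omega_{\phi_{0,\lambda}}^n<C\delta$; and trivially $|\int_{M_0\setminus U_0^\delta}\phi_{0,\lambda}\,\omega_0^n|<C\delta$ and $|\int_{M_0\setminus U_0^\delta}\phi_{0,\lambda}\,\omega_{\phi_{0,\lambda}}^n|<C\delta$. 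Combining these four tail estimates with the convergence over $U_t^\delta$ gives $\limsup_{t\to0}|I(\phi_{t,\lambda})-I(\phi_{0,\lambda})|\le 4C\delta$, and letting $\delta\to0$ finishes the proof.

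The point requiring care is the behaviour near $S$: one must control the mass of both limit measures $\omega_0^n$ and $\omega_{\phi_{0,\lambda}}^n$ in a neighbourhood of $S$ and know that the masses on $U_t^\delta$ converge, so that the single $L^\infty$-bound absorbs the tail uniformly in $t$. This is exactly where the full-mass property of $\phi_{0,\lambda}$ (Definition \ref{D3.1}) and the constancy $a_t=\Vol(M_t)$ of the cohomological volume enter; the weak convergence of the complex Monge--Amp\`ere measures on $\overline{U_0^\delta}$ coming from $C^2$-convergence is then routine via \cite{BT}.
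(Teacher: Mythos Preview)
Your proof is correct and follows exactly the template the paper itself uses in Proposition~\ref{P4.4}: split $M_t$ into a region $U_t^\delta$ away from $S$ where $C^2$-convergence and Bedford--Taylor give convergence of the Monge--Amp\`ere integrals, and a tail near $S$ controlled by the uniform $L^\infty$-bound together with the cohomological volume identity. The paper does not actually give a proof of Lemma~\ref{L4.7}; it simply refers to Lemma~2.14 of \cite{SSY}, whose argument is precisely the one you have written out (specialised there to the K\"ahler--Einstein case, without the vector field, but the functional $I$ does not involve $V$ anyway). Your extra care in choosing $U_0^\delta$ so that $\omega_{\phi_{0,\lambda}}^n(\partial U_0^\delta)=0$ is a sensible way to pass from weak convergence of measures to convergence of the integrals over the open set.
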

Fix $\hat{\lambda}\in (0,1-m^{-1})$, by the definition of the twisted Mabuchi functional and the elementary inequality $x\log x\geq -e^{-1}$, we see that for any $\phi\in \PSH(M_t,\omega_t)^{T_t}\cap C^\infty(M_t)$,
\begin{align*}
M_{V_t,\hat{\lambda}}(\phi)&
=-r(\hat{\lambda})\big(I_{V_t}(\phi)-J_{V_t}(\phi)\big)+\int_M\log\frac{e^{\theta_{M_t}+V_t(\phi)}\omega^n_{\phi_t}}{\Omega_t}e^{\theta_{M_t}+V_t(\phi)}\omega^n_{\phi_t}\\
&\geq -r(\hat{\lambda})\big(I_{V_t}(\phi)-J_{V_t}(\phi)\big)-e^{-1}\cdot \int_{M_t}\Omega_t\geq -r(\hat{\lambda})\big(I_{V_t}(\phi)-J_{V_t}(\phi)\big)-C
\end{align*}
where $C$ is a constant independent of $t$ since the volume of $M_t$ can be bounded. On the other hand, assume that there exists a twisted K\"{a}hler-Ricci soliton on each $M_t$ for $t\in \Delta^*$ when $\lambda=\bar{\lambda}$, then by Theorem \ref{T4.1}, we can find $C>0$ and a sequence $t_i\rightarrow 0$ such that $M_{V_i,\bar{\lambda}}(\phi)\geq -C$ $(V_{t_i}=V_i)$ for any $\phi\in \PSH(M_{t_i},\omega_{t_i})^{T_{t_i}}\cap C^\infty(M_{t_i})$.

Note that the twisted Mabuchi functional is linear in $\lambda$, i.e.
$$
s M_{V_i,\hat{\lambda}}(\phi)+(1-s) M_{V_i,\bar{\lambda}}(\phi)=M_{V_i,s\hat{\lambda}+(1-s)\bar{\lambda}}(\phi),
$$
so for each $\lambda\in[\hat{\lambda},\bar{\lambda})$, we have
$$
M_{V_i,\lambda}\geq \delta_\lambda\cdot \big(I_{V_i}(\phi)-J_{V_i}(\phi)\big)-C
$$
where $\delta_\lambda=-r(\hat{\lambda})\frac{\lambda-\bar{\lambda}}{\hat{\lambda}-\bar{\lambda}}>0$. Lemma \ref{L3.1} and \ref{L3.3} claim that the minimizer of the twisted Mabuchi functional is just that of the twisted Ding functional, so the twisted K\"{a}hler-Ricci soliton $\phi_{t,\lambda}$ is the minimum of $M_{V_t,\lambda}$. Therefore for $\lambda\in[\hat{\lambda},\bar{\lambda})$, we have
$$
     \int_{M_i}(\theta_i-h_i)e^{\theta_i}\omega_{t_i}^n=M_{V_i,\lambda}(0)\geq M_{V_i,\lambda}(\phi_{t_i,\lambda})\geq \delta_\lambda\cdot \big(I_{V_i}(\phi_{t_i,\lambda})-J_{V_i}(\phi_{t_i,\lambda})\big)-C
$$
where $\theta_i=\theta_{M_{t_i}}$ and $h_i$ is defined by $\Ric(\omega_{t_i})=\omega_{t_i}+\sqrt{-1}\partial\bar{\partial}h_i$. According to Proposition 2.21 \cite{SSY} and Proposition \ref{P2.2}, we obtain
$$
I_{V_i}(\phi_{t_i,\lambda})\leq C\delta_\lambda^{-1}.
$$

If fix a small number $\epsilon>0$, then for any $\lambda\in[1-m^{-1}+\epsilon,\bar{\lambda}-\epsilon]$, we have $I_{V_i}(\phi_{t_i,\lambda})\leq C'_\epsilon$ which implies $||\phi_{t_i,\lambda}||_{L^\infty}\leq C_\epsilon$ by Lemma \ref{L4.5}. Lemma \ref{L4.6} claims that by passing to a subsequence $t'_i$ ($\{t'_i\}\subset \{t_i\}$), $\phi_{t'_i,\lambda}\circ G_{t'_i}$ $C^\infty$-converges to a smooth function $\phi_{0,\lambda}$ on $M_0\backslash S$, which satisfies
$$
e^{\theta_{M_0}+V_0(\phi_{0,\lambda})}(\omega_0+\sqrt{-1}\partial\bar{\partial}\phi_{0,\lambda})^n=e^{-r(\lambda)\phi_{0,\lambda}}\Omega_0.
$$
This equation implies that $\phi_{0,\lambda}$ is a twisted K\"{a}hler-Ricci soliton on $M_0$. By Theorem \ref{T3.2}, we know that $\phi_{t_i,\lambda}\circ G_{t_i}$ $C^\infty$-converges to $\phi_{0,\lambda}$ on $M_0\backslash S$.
\begin{proposition}\label{P4.6}
For $\lambda\in [1-m^{-1}+\epsilon,\bar{\lambda}-\epsilon]$, suppose that $\phi_{t,\lambda}$ are the twisted K\"{a}hler-Ricci solitons, then
$$
\limsup_{\delta\rightarrow 0}\max_{|t|=\delta}I(\phi_{t,\lambda})<+\infty.
$$
In particular
$$
||\phi_{t,\lambda}||_{L^\infty}<C_\epsilon.
$$
\end{proposition}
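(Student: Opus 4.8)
The plan is a contradiction argument built on the central fibre soliton $\phi_{0,\lambda}$ --- produced in the discussion preceding the proposition --- together with the fact that it is the \emph{unique} twisted K\"ahler--Ricci soliton on $M_0$ (Theorem \ref{T3.2}). Two preliminary observations: on $\Delta^*=\Delta\setminus\{0\}$ every $M_t$ is a smooth Fano carrying a unique twisted K\"ahler--Ricci soliton, so $\phi_{t,\lambda}$ depends continuously on $t$ (uniqueness together with the interior estimates of Lemma \ref{L4.6} away from the central fibre), and hence $t\mapsto I(\phi_{t,\lambda})$ is continuous on $\Delta^*$; and from the paragraph before the proposition there is a sequence $t_i\to0$ with $\phi_{t_i,\lambda}\circ G_{t_i}\to\phi_{0,\lambda}$ in $C^\infty_{\mathrm{loc}}(M_0\setminus S)$, so since the $\phi_{t_i,\lambda}$ are uniformly bounded Lemma \ref{L4.7} gives $I(\phi_{t_i,\lambda})\to I(\phi_{0,\lambda})<+\infty$.

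Now assume, for contradiction, that $\limsup_{\delta\to0}\max_{|t|=\delta}I(\phi_{t,\lambda})=+\infty$; then there is a sequence $s_j\to0$ with $I(\phi_{s_j,\lambda})\to+\infty$. Fix any value $A>I(\phi_{0,\lambda})$. For every $\rho>0$ choose $i,j$ large enough that $t_i,s_j\in\Delta_\rho\setminus\{0\}$ and $I(\phi_{t_i,\lambda})<A<I(\phi_{s_j,\lambda})$; since $\Delta_\rho\setminus\{0\}$ is path-connected and $t\mapsto I(\phi_{t,\lambda})$ is continuous on it, the intermediate value theorem yields $u\in\Delta_\rho\setminus\{0\}$ with $I(\phi_{u,\lambda})=A$. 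Letting $\rho\to0$ produces a sequence $u_k\to0$ with $I(\phi_{u_k,\lambda})=A$ for all $k$. By Lemma \ref{L4.5} and the remark following it, $||\phi_{u_k,\lambda}||_{L^\infty}\le C$; Lemma \ref{L4.6} then gives uniform local $C^\infty$ bounds for $\phi_{u_k,\lambda}\circ G_{u_k}$ away from $S$, so after passing to a subsequence it converges in $C^\infty_{\mathrm{loc}}(M_0\setminus S)$ to a smooth solution of the twisted K\"ahler--Ricci soliton equation on $M_0$, which by uniqueness (Theorem \ref{T3.2}) is $\phi_{0,\lambda}$. But then Lemma \ref{L4.7} forces $I(\phi_{u_k,\lambda})\to I(\phi_{0,\lambda})$, contradicting $I(\phi_{u_k,\lambda})\equiv A>I(\phi_{0,\lambda})$. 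Hence $\limsup_{\delta\to0}\max_{|t|=\delta}I(\phi_{t,\lambda})<+\infty$, and then $||\phi_{t,\lambda}||_{L^\infty}<C_\epsilon$ follows once more from Lemma \ref{L4.5}.

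The crux is the interplay of connectedness of the punctured disc with Lemma \ref{L4.7}: it forces every finite accumulation value of $I(\phi_{t,\lambda})$ as $t\to0$ to equal $I(\phi_{0,\lambda})$, so an intermediate value strictly larger than $I(\phi_{0,\lambda})$ could only be attained along a sequence tending to $0$, which the a priori estimates of Lemmas \ref{L4.5}--\ref{L4.7} and uniqueness of the central soliton rule out. The remaining technical point is the continuity of $t\mapsto I(\phi_{t,\lambda})$ on $\Delta^*$, which rests on uniqueness of the soliton on the smooth fibres together with Lemma \ref{L4.6}.
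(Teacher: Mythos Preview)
Your argument is correct and follows essentially the same route as the paper's proof: both assume the $\limsup$ is infinite, use continuity of $t\mapsto I(\phi_{t,\lambda})$ on the punctured disc together with the good sequence $t_i$ to produce (via the intermediate value theorem) a new sequence $u_k\to 0$ on which $I$ is constant and strictly larger than $I(\phi_{0,\lambda})$, then feed this into Lemmas \ref{L4.5}--\ref{L4.7} and the uniqueness from Theorem \ref{T3.2} to reach a contradiction. The paper's version is terser --- it writes ``pick $s_j$ with $I(\phi_{s_j,\lambda})=C+1$'' without spelling out the IVT step or the continuity on $\Delta^*$ --- so your more explicit treatment of those points is a genuine improvement in clarity, not a different method.
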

\begin{proof}
By the previous argument and Lemma \ref{L4.4}, $I(\phi_{t_i,\lambda})\leq C$. Next we argue by contradiction, then we pick $s_j$ and $|s_j|\rightarrow 0$ such that $I(\phi_{s_j,\lambda})=C+1$. The same argument as above claims that $\phi_{s_j,\lambda}\circ G_{s_j}$ $C^\infty$-converges to $\phi_{0,\lambda}$. Lemma \ref{L4.7} gives
$$
\lim_{j\rightarrow\infty}I(\phi_{s_j,\lambda})=C+1=I(\phi_{0,\lambda})=\lim_{i\rightarrow\infty}I(\phi_{t_i,\lambda})\leq C.
$$
This is a contradiction, so we obtain this consequence.
\end{proof}
Finally, we give the following theorem.
\begin{theorem}\label{T4.2}
For $\lambda\in [1-m^{-1}+\epsilon,\bar{\lambda}-\epsilon]$, suppose that $\phi_{t,\lambda}$ are the twisted K\"{a}hler-Ricci solitons, then $\phi_{t,\lambda}\circ G_t$ $C^\infty$-converges to $\phi_{0,\lambda}$ on $M_0\backslash S$.
\end{theorem}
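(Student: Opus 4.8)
The plan is to combine the uniform estimates that are now available for \emph{every} $t$ with the uniqueness of the twisted K\"{a}hler-Ricci soliton on $M_0$, and then argue that every subsequential limit is forced to be $\phi_{0,\lambda}$. Concretely, Proposition \ref{P4.6} supplies a bound $\|\phi_{t,\lambda}\|_{L^\infty}<C_\epsilon$ valid for all $t\in\Delta^*$ and all $\lambda\in[1-m^{-1}+\epsilon,\bar{\lambda}-\epsilon]$, with $C_\epsilon$ independent of $t$ and $\lambda$; inserting this into Lemma \ref{L4.6} yields, for every compact $K_0\Subset M_0\backslash S$ and every $k\geq2$, a uniform bound $\|\phi_{t,\lambda}\circ G_t\|_{C^k(K_0)}\leq C(K_0,k,\epsilon)$ for $t$ small.

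Next I would run a compactness/limit argument. Fix an arbitrary sequence $t_j\to0$; by the uniform $C^k$-bounds, Arzel\`{a}-Ascoli and a diagonal extraction over an exhaustion of $M_0\backslash S$ by compact sets, a subsequence of $\phi_{t_j,\lambda}\circ G_{t_j}$ converges in $C^\infty$ on compact subsets of $M_0\backslash S$ to some $\psi$ with $|\psi|\leq C_\epsilon$. Since $G_t^*\omega_t\to\omega_0$, $(G_t^{-1})_*V_t\to V_0$, $G_t^*\Omega_t\to\Omega_0$, and $\theta_{M_t}\circ G_t\to\theta_{M_0}$ by Lemma \ref{L4.1}, passing to the limit in the Monge-Amp\`{e}re equation satisfied by $\phi_{t_j,\lambda}$ shows that $\psi$ solves
$$
e^{\theta_{M_0}+V_0(\psi)}(\omega_0+\sqrt{-1}\partial\bar{\partial}\psi)^n=e^{-r(\lambda)\psi}\Omega_0
$$
on $M_0\backslash S$. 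As in the argument preceding Proposition \ref{P4.6}, boundedness of $\psi$, pluripolarity of $S$, and the fact that the adapted measure $\Omega_0$ has finite mass and does not charge $S$ (together with $r(\lambda)<1$) let one extend $\psi$ across $S$ to an element of $\PSH_{full}(M_0,\alpha_0)^{T_0}$ without loss of Monge-Amp\`{e}re mass; after the normalization of Definition \ref{D3.1}, $\omega_\psi=\alpha_0+\sqrt{-1}\partial\bar{\partial}\psi$ is then a twisted K\"{a}hler-Ricci soliton on $M_0$.

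Finally, by the uniqueness statement in Theorem \ref{T3.2} the twisted K\"{a}hler-Ricci soliton on $M_0$ is unique, hence $\psi=\phi_{0,\lambda}$. Thus every sequence $t_j\to0$ admits a subsequence along which $\phi_{t_j,\lambda}\circ G_{t_j}\to\phi_{0,\lambda}$ in $C^\infty$ on compact subsets of $M_0\backslash S$, and since the limit is always $\phi_{0,\lambda}$, the full family $\phi_{t,\lambda}\circ G_t$ converges to $\phi_{0,\lambda}$ in $C^\infty$ on $M_0\backslash S$ as $t\to0$. The step I expect to be the main obstacle is the identification of the subsequential limit: one must be sure that $\psi$, a priori only a smooth solution off $S$, genuinely extends to a finite-energy twisted soliton on the singular variety $M_0$ with the correct total mass, for only then does Theorem \ref{T3.2} apply and pin down $\psi=\phi_{0,\lambda}$. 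This is exactly where the pluripotential-theoretic ``no loss of mass'' statements of Section 2 and the properties of the adapted measure are needed; granting that, the upgrade from subsequential to full convergence is routine.
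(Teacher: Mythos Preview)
Your proposal is correct and follows essentially the same approach as the paper: indeed the paper states Theorem \ref{T4.2} without a separate proof, treating it as the immediate consequence of Proposition \ref{P4.6} (the $L^\infty$-bound for all $t$) combined with the compactness--limit--uniqueness argument already worked out for the special sequence $t_i$ just before Proposition \ref{P4.6}. The only point you flag as delicate---that the bounded limit $\psi$ off $S$ extends to a finite-energy twisted soliton on $M_0$ so that Theorem \ref{T3.2} applies---is handled in the paper by the same one-line assertion ``This equation implies that $\phi_{0,\lambda}$ is a twisted K\"{a}hler-Ricci soliton on $M_0$,'' so your more explicit discussion of pluripolarity and mass is a welcome elaboration rather than a deviation.
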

\section{Gromov-Hausdorff convergence under $L^\infty$-bound on K\"{a}hler potentials }
In this section we investigate the behavior of twisted K\"{a}hler-Ricci solitons in a $\mathbb{Q}$-Gorestein smoothing family. We use the techniques in \cite{WZ} \cite{RZ} \cite{DS} \cite{S2} \cite{DS1} \cite{LY} \cite{LTZ} \cite{WW} \cite{PSS} and \cite{TZ12}, to show that the Gromov-Hausdorff limit as $t\rightarrow 0$ is unique and equal to the twisted K\"{a}hler-Ricci soliton on the central fiber.

First we recall some elementary facts. Suppose that $\omega_{t,\lambda}$ are twisted K\"{a}hler-Ricci solitons on $M_t$ for $t\in \Delta^*$ and $1-m^{-1}+\epsilon\leq\lambda<1$, then Theorem B \cite{M1} says that the diameter of $(M_t,\omega_{t,\lambda})$ has a uniform upper bound only depending on $r(\lambda)$. Lemma 6.1 \cite{WZ} claims that
\begin{equation}\label{e5.1}
|\nabla \theta(\phi_{t,\lambda})|_{\omega_{t,\lambda}}+|\triangle_{\omega_{t,\lambda}}\theta(\phi_{t,\lambda})|\leq C
\end{equation}
where $\theta(\phi_{t,\lambda})=\theta_{M_t}+V_t(\phi_{t,\lambda})$ and $C$ is a positive constant only depending on $|\theta_{\phi_{t,\lambda}}|$ and $|V_t|_{\omega_t}^2$, moreover we can choose $C$ independent of $t$. By Theorem 6.2 \cite{WZ}, we have the non-collapsing property, i.e. for any $p_t\in M_t$,
$$
\Vol\big(B_{p_t}(1),\omega_{t,\lambda}\big)\geq C>0,
$$
where $C$ is a positive constant independent of $t$. By the Gromov precompactness theorem, passing to a subsequence $t_i\rightarrow 0$, we may assume that
$$
(M_{t_i},\omega_{t_i,\lambda})\xrightarrow{d_{GH}} (X,d).
$$
The limit $(X,d)$ is a compact length metric space. It has regular/singular decomposition $X=\mathcal{R}\cup \mathcal{S}$, a point $x\in \mathcal{R}$ if and only if the tangent cone at $x$ is the Euclidean space $\mathbb{R}^{2n}$. To simplify notation, we denote $(M_i,\omega_{i,\lambda})$ by
$(M_{t_i},\omega_{t_i,\lambda})$.
\begin{lemma}
The regular set $\mathcal{R}$ is open in the limit space $(X,d)$.
\end{lemma}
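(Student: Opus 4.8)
The plan is to realize $(X,d)$ as a non-collapsed limit of manifolds with Bakry--\'Emery Ricci curvature uniformly bounded below and then to deduce openness of $\mathcal{R}$ from the lower semicontinuity of the volume density together with the $\epsilon$-regularity theorem of Wang--Zhu.

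First I would record that the twisted K\"ahler--Ricci soliton equation gives, on each $(M_i,\omega_{i,\lambda})$, the uniform lower bound
$$
\Ric(\omega_{i,\lambda})-\sqrt{-1}\partial\bar{\partial}\,\theta(\phi_{t_i,\lambda})=(1-\lambda)\omega_{FS}+r(\lambda)\,\omega_{i,\lambda}\geq r(\lambda)\,\omega_{i,\lambda}>0,
$$
so the natural weight is the soliton potential $\theta(\phi_{t_i,\lambda})=\theta_{M_{t_i}}+V_{t_i}(\phi_{t_i,\lambda})$. By $(\ref{e5.1})$ these weights have uniformly bounded gradient and Laplacian, and since the diameters of $(M_i,\omega_{i,\lambda})$ are uniformly bounded (Theorem B \cite{M1}) the weights are themselves uniformly bounded; hence the measures $e^{\theta(\phi_{t_i,\lambda})}\omega_{i,\lambda}^n$ are uniformly comparable to the Riemannian volumes, and the relative volume comparison and $\epsilon$-regularity results of \cite{WZ} (see also \cite{DS}) hold with constants independent of $i$ and pass to the limit $(X,d)$.

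Next, for $x\in X$ I would consider the volume density
$$
\Theta(x):=\lim_{r\to 0}\frac{\Vol\big(B_x(r),d\big)}{v_{2n}\,r^{2n}},
$$
where $v_{2n}$ denotes the volume of the unit ball in $\mathbb{R}^{2n}$. By the Bishop--Gromov-type comparison this limit exists and equals the supremum over $r>0$ of the monotone comparison ratios, each of which is continuous in $x$, so $\Theta$ is lower semicontinuous on $X$; moreover the non-collapsing estimate (Theorem 6.2 \cite{WZ}) and comparison with $\mathbb{R}^{2n}$ give $0<c\leq\Theta(x)\leq 1$. The $\epsilon$-regularity theorem in this setting provides a constant $\epsilon_0>0$ such that $\Theta(x)>1-\epsilon_0$ forces the tangent cone at $x$ to be $\mathbb{R}^{2n}$, i.e. $x\in\mathcal{R}$; conversely, if $x\in\mathcal{R}$ then the tangent cone is $\mathbb{R}^{2n}$ and volume convergence yields $\Theta(x)=1$. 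Therefore $\mathcal{R}=\{x\in X:\Theta(x)>1-\epsilon_0\}$, which is open because $\Theta$ is lower semicontinuous. The genuinely hard inputs — the weighted volume comparison and the $\epsilon$-regularity/volume-rigidity theorem — are imported from \cite{WZ} and \cite{DS}, so the main point to check is that $(\ref{e5.1})$ and the diameter bound make all the relevant constants uniform in $i$; after that the statement follows from the soft argument above (alternatively, near a regular point the metric balls of small radius are Gromov--Hausdorff close to Euclidean balls, so $\epsilon$-regularity applies at all nearby points).
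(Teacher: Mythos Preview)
Your argument is correct and rests on the same deep inputs as the paper's proof---the weighted Bishop--Gromov comparison, volume convergence, and the $\epsilon$-regularity theorem of \cite{WZ} and \cite{DS}---but the packaging is different. You work entirely on the limit space: you introduce the volume density $\Theta$, observe that (after comparing with the model of curvature $r(\lambda)$) it is a supremum of continuous functions, hence lower semicontinuous, and then characterize $\mathcal{R}$ as the open super-level set $\{\Theta>1-\epsilon_0\}$. The paper instead takes a point $x\in\mathcal{R}$, uses the almost-Euclidean volume at some scale, transfers this back to the approximating sequence $(M_i,\omega_{i,\lambda})$ by volume convergence, invokes the local two-sided bound on $\Ric-L_{V_i}$ (Proposition~21 of \cite{DS}), and applies the $\epsilon$-regularity theorem (Proposition~19 of \cite{DS}) to produce $C^{1,\alpha}$ harmonic coordinates on a definite ball $B_d(x,\delta'r)$.

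The trade-off is this: your route is a clean, abstract proof of openness, but the paper's route simultaneously yields the $C^{1,\alpha}$ K\"ahler structure on $\mathcal{R}$ and the $C^{1,\alpha}$ convergence $\omega_{i,\lambda}\to\omega_1$, both of which are used immediately afterward (for instance, in the claim inside Proposition~\ref{P5.1} one perturbs harmonic coordinates to holomorphic ones). So while your argument suffices for the lemma as stated, you would still need to run the paper's version---or something equivalent---to supply the regularity and convergence used downstream.
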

\begin{proof}
If $x\in \mathcal{R}$, then there exists $r=r(x)>0$ such that $\mathcal{H}^{2n}(B_{d}(x,r))\geq (1-\frac{\delta}{2})\Vol(B^0_{r})$, where $\mathcal{H}^{2n}$ denotes the Hausdorff measure and $B^0_r$ is a ball of radius $r$ in $2n$-Euclidean space. Suppose $x_i\in M_i$ satisfying $x_i\xrightarrow{d_{GH}} x$, then by the volume convergence theorem (Remark 5.2 \cite{WZ}), $\Vol(B(x_i,r),\omega_{i,\lambda})\geq (1-\delta)\Vol(B^0_r)$ for sufficiently large i. Proposition 21 \cite{DS} claims that there exists a positive constant $A$ such that $0<\Ric(\omega_{i,\lambda})-L_{V_i}\omega_{i,\lambda}\leq A\omega_{i,\lambda}$ in $B_{\omega_{i,\lambda}}(x_i,r)$. So by the Proposition 19 \cite{DS}, there exists a constant $\delta'$ such that $B_d(x,\delta'r)$ has $C^{1,\alpha}$ harmonic coordinate. This implies $B_d(x,\delta'r)\subset \mathcal{R}$, furthermore $\mathcal{R}$ is open with a $C^{1,\alpha}$ K\"{a}hler metric $\omega_1$ and $\omega_{i,\lambda}$ converges to $\omega_1$ in $C^{1,\alpha}$-topology.
\end{proof}
Since $\mathcal{R}$ is dense in $X$, so we have the following lemma.
\begin{lemma}
$(X,d)=\overline{(\mathcal{R},\omega_1)}$, the metric completion of $(\mathcal{R},\omega_1)$.
\end{lemma}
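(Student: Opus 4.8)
The plan is to reduce the statement to two facts: the density of $\mathcal{R}$ in $(X,d)$, which has already been invoked, and the identification of the restricted metric $d|_{\mathcal{R}\times\mathcal{R}}$ with the Riemannian distance function of the $C^{1,\alpha}$ K\"ahler metric $\omega_1$ produced by the preceding lemma. Granting both, the conclusion is purely formal: $(X,d)$ is a complete (in fact compact) length space, the inclusion of $(\mathcal{R},\omega_1)$ into $(X,d)$ is then an isometric embedding with dense image, and an isometric embedding of a metric space with dense image into a complete space exhibits the latter as the metric completion of the former. Hence $(X,d)=\overline{(\mathcal{R},\omega_1)}$.

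The substantive work is therefore the equality $d|_{\mathcal{R}}=d_{\omega_1}$, where $d_{\omega_1}$ denotes the length distance of $\omega_1$ on $\mathcal{R}$. For the inequality $d(x,y)\le d_{\omega_1}(x,y)$ with $x,y\in\mathcal{R}$, I would fix a piecewise smooth path $\gamma$ in $\mathcal{R}$ from $x$ to $y$; since $\gamma$ has image in a compact subset of $\mathcal{R}$ on which $\omega_{i,\lambda}\to\omega_1$ in $C^{1,\alpha}$, the Gromov--Hausdorff approximations transport $\gamma$ to paths $\gamma_i\subset M_i$ with $\mathrm{length}_{\omega_{i,\lambda}}(\gamma_i)\to\mathrm{length}_{\omega_1}(\gamma)$; letting $i\to\infty$ in $d_i(x_i,y_i)\le\mathrm{length}_{\omega_{i,\lambda}}(\gamma_i)$ and then taking the infimum over $\gamma$ gives the bound. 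For the reverse inequality $d(x,y)\ge d_{\omega_1}(x,y)$ I would use that $(X,d)$ is a length space, so there is an almost length-minimizing path $\sigma$ in $X$ joining $x$ to $y$; the point is to push $\sigma$ into $\mathcal{R}$ with negligible loss of length, which follows from the structure theory for these noncollapsed limits with Bakry-\'{e}mery Ricci bounded below: $\mathcal{S}$ has Hausdorff codimension at least two (the Cheeger--Colding--Tian type estimates used in \cite{WZ}, \cite{DS}, \cite{RZ}), in particular $\mathcal{H}^{2n-1}(\mathcal{S})=0$, so a one-dimensional path can be perturbed off an arbitrarily thin tube around $\mathcal{S}$. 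Along the perturbed path $d$ agrees with the $\omega_1$-length, which yields $d(x,y)+\epsilon\ge d_{\omega_1}(x,y)$ for every $\epsilon>0$.

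I expect the removal of the singular set in the lower bound to be the main obstacle: it is not formal and genuinely rests on the codimension estimate for $\mathcal{S}$ coming from the smooth Riemannian convergence theory of \cite{WZ} and \cite{DS}. With that estimate available, the perturbation of $\sigma$ is carried out chart by chart in the $C^{1,\alpha}$ harmonic coordinates on $\mathcal{R}$ established in the previous lemma, and the remaining completion step is the elementary observation recorded in the first paragraph.
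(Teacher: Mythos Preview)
Your outline is correct and in fact supplies substantially more than the paper does: the paper offers no proof at all, recording only the sentence ``Since $\mathcal{R}$ is dense in $X$, so we have the following lemma'' and then stating the result. In other words, the author treats the statement as a standard consequence of the Cheeger--Colding/Wang--Zhu structure theory already invoked, and leaves the reader to fill in exactly the argument you wrote.

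You are right that density alone is insufficient: the real content is the identification $d|_{\mathcal{R}}=d_{\omega_1}$, and you correctly locate the only nontrivial step in the inequality $d\ge d_{\omega_1}$, which needs the Hausdorff codimension-two estimate for $\mathcal{S}$ (available from \cite{WZ}, \cite{DS}) to push almost-minimizing paths off the singular set. One small sharpening: rather than perturbing a fixed $\sigma$ chart by chart, it is cleaner to use that for a.e.\ pair of nearby regular points the limit geodesic already lies in $\mathcal{R}$ (a Fubini consequence of $\mathcal{H}^{2n-1}(\mathcal{S})=0$), or to cite directly the geodesic convexity of $\mathcal{R}$ in the Colding--Naber sense, which persists in the Bakry--\'Emery setting. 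Either way, your decomposition into the easy direction via $C^{1,\alpha}$ convergence of $\omega_{i,\lambda}\to\omega_1$ on compacta and the hard direction via the singular-set estimate is the standard route, and the concluding completion step is, as you say, formal once compactness of $(X,d)$ is noted.
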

Next, by the argument of \cite{RZ}, we define $\Gamma_t:=M_t\backslash G_t(S)$, where $S$ denotes the singular set of $M_0$. Define the Gromov-Hausdorff limit of $\Gamma_t$
$$
\Gamma:=\big\{x\in X| \textrm{there exists $x_i\in\Gamma_i:=\Gamma_{t_i}$ such that $x_i\rightarrow x$} \big\}.
$$
Assume that the K\"{a}hler potentials $||\phi_{t,\lambda}||_{L^\infty}$ is uniformly bounded, then we have
\begin{proposition}\label{P5.1}
$(X,d)$ is isometric to $\overline{(M_0\backslash S,\omega_{0,\lambda})}$, where $\omega_{0,\lambda}$ is the unique twisted K\"{a}hler-Ricci soliton on $M_0$.
\end{proposition}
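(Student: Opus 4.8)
The plan is to transport the smooth convergence of potentials established in Section 4 up to the level of metric spaces: identify the regular locus of $(X,d)$ with $(M_0\setminus S,\omega_{0,\lambda})$, and then show the complement $X\setminus\Gamma$ has real codimension at least two, so that the two metric completions coincide.

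Under the standing hypothesis, Theorem \ref{T4.2} (equivalently, a rerun of its proof) gives $\phi_{t_i,\lambda}\circ G_{t_i}\to\phi_{0,\lambda}$ in $C^\infty_{loc}(M_0\setminus S)$, hence $G_{t_i}^*\omega_{t_i,\lambda}\to\omega_{0,\lambda}$ smoothly on every compact $K_0\Subset M_0\setminus S$. Thus for each such $K_0$ and each $\epsilon>0$, for $i$ large $G_{t_i}|_{K_0}$ is an $\epsilon$-isometry from $(K_0,\omega_{0,\lambda})$ onto its image in $(M_{t_i},\omega_{t_i,\lambda})$. Exhausting $M_0\setminus S$ by such $K_0$ and diagonalizing, one obtains a map $F\colon M_0\setminus S\to X$, $F(x)=\lim_i G_{t_i}(x)$, which is a local isometry of $(M_0\setminus S,\omega_{0,\lambda})$ onto an open subset of the regular locus $\mathcal{R}$ with its $C^{1,\alpha}$ metric $\omega_1$ (near $x$ a small metric ball around $G_{t_i}(x)$ converges to a smooth ball, so $F(x)\in\mathcal{R}$); by construction $F(M_0\setminus S)=\Gamma$, and $\Gamma\subset\mathcal{R}$. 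In particular $F$ is $1$-Lipschitz for the two length distances, since any piecewise-smooth path in $M_0\setminus S$ is carried by $G_{t_i}$ to a path in $M_{t_i}$ of asymptotically the same $\omega_{t_i,\lambda}$-length; this gives $d(F(x),F(y))\le d_{\omega_{0,\lambda}}(x,y)$.

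The reverse inequality is the heart of the matter: one must prevent a near-minimizing $\omega_{t_i,\lambda}$-geodesic joining $G_{t_i}(x)$ and $G_{t_i}(y)$ from finding a shortcut near $G_{t_i}(S)$. The uniform Bakry-\'{E}mery bound $\Ric(\omega_{t_i,\lambda})-L_{V_i}\omega_{t_i,\lambda}>r(\lambda)\omega_{t_i,\lambda}$, together with the non-collapsing and volume-convergence statements of Wang-Zhu recalled in this section, place $(X,d)$ in the Cheeger-Colding structure theory, so that $\mathcal{S}=X\setminus\mathcal{R}$ is closed with $\mathcal{H}^{2n-1}(\mathcal{S})=0$. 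Combined with the argument of Donaldson-Sun \cite{DS1}, which realizes $X$ homeomorphically as the normal variety $M_0$ with $\mathcal{R}$ corresponding exactly to $M_0\setminus S$ (here one uses that $M_0$ has klt singularities, so no metric tangent cone over $S$ is Euclidean), we get $\Gamma=\mathcal{R}$ and $X\setminus\Gamma=\mathcal{S}$ of real codimension $\ge 2$. A closed set of codimension $\ge 2$ is removable for the length structure: any path in $X$ may be perturbed, with arbitrarily small increase of length, to avoid a prescribed neighborhood of $\mathcal{S}$ and hence to lie in $\Gamma$; pulling such a competitor back through $F^{-1}$ yields $d_{\omega_{0,\lambda}}(x,y)\le d(F(x),F(y))+o(1)$, whence equality.

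Consequently $F$ is a distance-preserving bijection of $(M_0\setminus S,\omega_{0,\lambda})$ onto $(\Gamma,\omega_1)=(\mathcal{R},\omega_1)$, and passing to metric completions, using the lemma above that $(X,d)=\overline{(\mathcal{R},\omega_1)}$, produces the asserted isometry $(X,d)\cong\overline{(M_0\setminus S,\omega_{0,\lambda})}$. Finally, the soliton $\omega_{0,\lambda}$ is unique by Theorem \ref{T3.2}, so the limit is independent of the subsequence $t_i$ and the whole family $(M_t,\omega_{t,\lambda})$ converges. The main obstacle is the codimension-$\ge 2$ estimate for $X\setminus\Gamma$ and the associated ``no shortcuts through the singular set'' argument; once that is in place, the remainder is bookkeeping around the smooth convergence already in hand.
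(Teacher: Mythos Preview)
Your overall architecture matches the paper's --- identify $M_0\setminus S$ with an open subset of $\mathcal{R}$ via the smooth convergence of potentials, show the complement has real codimension at least two, and upgrade the local isometry to an isometry of completions. The gap is in the codimension step. You correctly extract $\mathcal{H}^{2n-1}(\mathcal{S})=0$ from the Bakry--\'Emery structure theory, but to pass from $\mathcal{S}$ to $X\setminus F(M_0\setminus S)$ you invoke Donaldson--Sun as though it identified $X$ with the \emph{specific} variety $M_0$ (and $\mathcal{R}$ with $M_0\setminus S$). Donaldson--Sun only yields that $X$ is homeomorphic to \emph{some} normal variety; pinning it down as $M_0$ is precisely what Proposition~\ref{P5.1} and the subsequent Theorem~\ref{T5.1} are proving, so this appeal is circular. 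Your parenthetical klt argument (``no tangent cone over $S$ is Euclidean'') has the same defect: it presupposes a correspondence between points of $X\setminus F(M_0\setminus S)$ and points of $S\subset M_0$, which is again the unproved homeomorphism. Without this you have no control over $\mathcal{R}\setminus F(M_0\setminus S)$, which could a priori be a nonempty open set.

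The paper closes exactly this gap by a direct local argument that avoids any forward reference to \cite{DS1}. In the paper's convention $\Gamma$ is the Gromov--Hausdorff limit of the \emph{complements} $\Gamma_i=M_i\setminus G_{t_i}(M_0\setminus S)$ (so it is the complement of your $\Gamma$, not your $\Gamma$ itself). Near any $x\in\Gamma\cap\mathcal{R}$ one perturbs the $C^{1,\alpha}$ harmonic coordinates to holomorphic coordinates; the $\Gamma_i$ are then local analytic sets of uniformly bounded volume, hence converge to a proper analytic subset. This gives $\dim_{\mathbb{R}}(\Gamma\cap\mathcal{R})\le 2n-2$, and together with the bound on $\mathcal{S}$ one obtains $\dim_{\mathbb{R}}\Gamma\le 2n-2$. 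The argument of Rong--Zhang \cite{RZ} then supplies the local isometry $(X\setminus\Gamma,\omega_1)\cong(M_0\setminus S,\omega_{0,\lambda})$, after which the length-space step finishes as you outline. This analytic-set convergence is the missing ingredient in your proposal.
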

\begin{proof}
First we prove the following claim.
\begin{claim}
$\Gamma\backslash \mathcal{S}$ is a subvariety of dimension $(n-1)$ if it is not empty.
\end{claim}
\begin{proof}
Let $x\in \Gamma\backslash \mathcal{S}$ and $x_i\in \Gamma_i$ such that $ x_i\xrightarrow{d_{GH}} x$. By the $C^{1,\alpha}$ convergence of $\omega_{i,\lambda}$, there are $C,r>0$ independent of $i$ and a sequence of harmonic coordinates in $B_{\omega_{i,\lambda}}(x_i,r)$ such that $C^{-1}\omega_E\leq \omega_{i,\lambda}\leq C\omega_E$ where $\omega_E$ is the Euclidean metric in the coordinates. Furthermore, the sequence of harmonic coordinates can be perturbed to holomorphic coordinates on $B_{\omega_{i,\lambda}}(x_i,r)$ according to Lemma 3.11 \cite{TZ16}. Since the total volume of
$\Gamma_i$ is uniformly bounded, the local analytic set $\Gamma_i\cap B_{\omega_{i,\lambda}}(x_i,r)$ have a uniform bound of degree and so converge to an analytic set $\Gamma\cap B_d(x,r)$.
\end{proof}
From the above claim we know that $\dim_{\mathbb{R}}(\Gamma)\leq 2n-2$. By the argument of \cite{RZ}, $(X\backslash \Gamma,\omega_1)$ is homeomorphic and locally isometric to $(M_0\backslash S,\omega_{0,\lambda})$. Since $X$ is a length metric space and $\dim_{\mathbb{R}}(\Gamma)\leq 2n-2$, $(X\backslash \Gamma,\omega_1)$ is isometric to $(M_0\backslash S,\omega_{0,\lambda})$. So we have
$$
(X,d)=\overline{(X\backslash \Gamma,\omega_1)}=\overline{(M_0\backslash S,\omega_{0,\lambda})}.
$$
\end{proof}
A direct corollary is
\begin{corollary}
$(M_t,\omega_{t,\lambda})$ converges globally to $(X,d)$ in the Gromov-Hausdorff topology as $t\rightarrow 0$.
\end{corollary}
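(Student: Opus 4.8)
The plan is to deduce the global Gromov-Hausdorff convergence from the subsequential identification in Proposition \ref{P5.1} by means of a standard sub-subsequence argument. First I would record that the family $\{(M_t,\omega_{t,\lambda})\}_{t\in\Delta^*}$ is precompact in the Gromov-Hausdorff topology: by Theorem B \cite{M1} the diameters of $(M_t,\omega_{t,\lambda})$ are uniformly bounded (depending only on $r(\lambda)$); by Proposition 21 \cite{DS} one has $0<\Ric(\omega_{t,\lambda})-L_{V_t}\omega_{t,\lambda}\leq A\omega_{t,\lambda}$ with $A$ independent of $t$; and by Theorem 6.2 \cite{WZ} together with the $L^\infty$-bound on the potentials one has the uniform non-collapsing $\Vol(B_{p_t}(1),\omega_{t,\lambda})\geq C>0$. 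Hence Gromov's precompactness theorem applies uniformly in $t\in\Delta^*$.

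Next, given an arbitrary sequence $t_j\to 0$, I would extract a subsequence $t_{j_k}$ along which $(M_{t_{j_k}},\omega_{t_{j_k},\lambda})$ converges in the Gromov-Hausdorff sense to some compact length space $(Y,d_Y)$. Proposition \ref{P5.1} then identifies $(Y,d_Y)$ isometrically with $\overline{(M_0\backslash S,\omega_{0,\lambda})}$. The crucial point is that this limit does not depend on the chosen sequence or subsequence: by Theorem \ref{T3.2} the twisted K\"ahler-Ricci soliton $\omega_{0,\lambda}$ on the central fiber $M_0$ is unique, so $\overline{(M_0\backslash S,\omega_{0,\lambda})}$ is a fixed compact metric space, which is exactly the $(X,d)$ appearing in the statement.

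Finally I would invoke the elementary fact that, in a metric space — here the space of isometry classes of compact metric spaces equipped with the Gromov-Hausdorff distance — a precompact sequence all of whose subsequential limits coincide with a fixed point must itself converge to that point. Applying this to $\{(M_t,\omega_{t,\lambda})\}$ as $t\to 0$ yields the claimed global convergence. I do not expect any serious obstacle: all the analytic input (uniform diameter bound, uniform Bakry-\'Emery Ricci lower bound, uniform non-collapsing, and the isometric identification of the limit) has already been assembled in Section 5, and what remains is the purely topological sub-subsequence principle. The only point needing a little care is to check that every geometric estimate used in the precompactness step is genuinely uniform over $t\in\Delta^*$ rather than merely along the particular sequence $t_i$ used in Proposition \ref{P5.1}; this is, however, precisely the way those estimates were stated.
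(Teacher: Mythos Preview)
Your proposal is correct and is precisely the argument the paper has in mind: the corollary is stated with no proof beyond the phrase ``A direct corollary is'', and what you have written is exactly the sub-subsequence argument that justifies this, using the uniqueness of $\omega_{0,\lambda}$ from Theorem~\ref{T3.2} to pin down the limit in Proposition~\ref{P5.1}.
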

\begin{proposition}
$M_0\backslash S=\mathcal{R}$, the regular set of $X$.
\end{proposition}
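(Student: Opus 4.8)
I would prove the equality as two inclusions inside $X$, using the isometry $X\setminus\Gamma\cong(M_0\setminus S,\omega_{0,\lambda})$ constructed in the proof of Proposition \ref{P5.1} together with the facts recorded there: $\Gamma$ is closed, $\mathcal{R}$ is open, it carries a $C^{1,\alpha}$ (hence uniformly nondegenerate) K\"ahler metric $\omega_1$ with $\omega_{i,\lambda}\to\omega_1$ in $C^{1,\alpha}_{loc}$, and $\mathcal{R}$ has a complex structure for which $X\setminus\Gamma\cong M_0\setminus S$ is biholomorphic.

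For $M_0\setminus S\subseteq\mathcal{R}$: by Theorem \ref{T4.2}, $\phi_{0,\lambda}$ is smooth on $M_0\setminus S$, so $\omega_{0,\lambda}$ is a genuine smooth K\"ahler metric there; via the isometry of Proposition \ref{P5.1} each point of $M_0\setminus S$ then has a neighbourhood in $X$ isometric to a smooth K\"ahler ball, its tangent cone is $\mathbb{R}^{2n}$, and so it lies in $\mathcal{R}$. Equivalently $\mathcal{S}\subseteq\Gamma$.

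For the reverse inclusion it suffices to show $\Gamma\cap\mathcal{R}=\emptyset$, which I would do by contradiction. If $\Gamma\cap\mathcal{R}\neq\emptyset$, then $\Gamma\setminus\mathcal{S}=\Gamma\cap\mathcal{R}$ is nonempty and, by the Claim in the proof of Proposition \ref{P5.1}, is a complex subvariety $Z$ of $\mathcal{R}$ of dimension $n-1$; thus $M_0\setminus S=\mathcal{R}\setminus Z$ is realised as the complement of a divisor inside a complex manifold carrying the uniformly nondegenerate $C^{1,\alpha}$ metric $\omega_1$, which restricts to $\omega_{0,\lambda}$. By the desingularization analysis of \cite{RZ}\cite{DS1} (extending the convergent $\mathbb{CP}^N$-embeddings across $\mathcal{R}$), a neighbourhood $B$ of a general point of $Z$ maps holomorphically into $M_0$ by a map $\bar\Phi$ that is biholomorphic off $Z$ and sends $Z\cap B$ into $S$; since $\dim_{\mathbb{C}}S\le n-2$ by normality of the log-terminal $\mathbb{Q}$-Fano $M_0$ while $\dim_{\mathbb{C}}Z=n-1$, the map $\bar\Phi$ contracts the divisor $Z$, so its holomorphic Jacobian $J_{\bar\Phi}$ vanishes along $Z$. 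Because $\phi_{0,\lambda}$ is bounded, $\omega_{0,\lambda}^n\asymp\Omega_0$, hence on $B\setminus Z$ the volume form $\omega_1^n=\bar\Phi^{*}\omega_{0,\lambda}^n$ is comparable to $|J_{\bar\Phi}|^{2}$ times the pulled-back density of $\Omega_0$. On a log resolution $\pi\colon M\to M_0$ the adapted measure has only zeros of order $a_i\ge0$ along $E_i$ and poles of order $b_j<1$ along $F_j$, so its density blows up strictly slower than order $1$ along any divisor and cannot compensate the vanishing of $|J_{\bar\Phi}|^{2}$ along $Z$; thus $\omega_1^n\to0$ along $Z$, contradicting the nondegeneracy of $\omega_1$ near $Z$. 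Therefore $Z=\emptyset$, $\Gamma=\mathcal{S}$, and $\mathcal{R}=X\setminus\Gamma=M_0\setminus S$.

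The first inclusion and the reduction to the Claim are routine; the main obstacle is this last step --- quantitatively excluding a divisorial contraction by weighing the vanishing order of $J_{\bar\Phi}$ against the log-terminal (order $<1$) bound on the singularities of the adapted measure, which genuinely uses the structure theory of \cite{RZ}\cite{DS1}, the $\mathbb{Q}$-Gorenstein smoothing hypothesis, and factoring $\bar\Phi$ through a log resolution.
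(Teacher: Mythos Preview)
Your first inclusion and the reduction to showing $\Gamma\cap\mathcal{R}=\emptyset$ are fine and match the paper. The reverse inclusion, however, is handled in the paper by a much more direct argument that avoids your extension/Jacobian/log-terminal machinery entirely. The paper works with the \emph{approximating} family $\Gamma_i\subset M_i$ rather than with the limit set $Z=\Gamma\cap\mathcal{R}$: if $p\in\mathcal{R}\setminus(M_0\setminus S)$, pick $p_i\in\Gamma_i$ with $p_i\to p$; the $C^{1,\alpha}$ convergence on $\mathcal{R}$ gives holomorphic charts around $p_i$ in which $C^{-1}\omega_E\le\omega_{i,\lambda}\le C\omega_E$ uniformly, and then a Lelong-type monotonicity bound forces
\[
\int_{\Gamma_i\cap B_{\omega_{i,\lambda}}(p_i,r)}\omega_{i,\lambda}^{\,q}\ \ge\ c(r,C)>0,
\]
where $q=\dim_{\mathbb{C}}\Gamma_i$. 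On the other hand the total mass $\int_{\Gamma_i}\omega_{i,\lambda}^{\,q}=\int_{\Gamma_i}\omega_i^{\,q}$ is cohomological and tends to $0$ as $i\to\infty$, giving the contradiction. No map $\bar\Phi$, no resolution, no discrepancy computation.

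Your route is not wrong in spirit, but the step you yourself flag as the obstacle is genuinely delicate and not quite complete as stated. The holomorphic extension $\bar\Phi:\mathcal{R}\to M_0\subset\mathbb{CP}^N$ is fine (Riemann extension into a projective target), and one can indeed argue that $\bar\Phi$ cannot be a local biholomorphism at any $z\in Z$. The difficulty is the volume comparison when $\bar\Phi(z)\in S$: since $M_0$ is singular there, ``$J_{\bar\Phi}$'' has no direct meaning, and to make your inequality $|J_{\bar\Phi}|^{2}\cdot(\text{pulled-back density})\to 0$ rigorous you must lift $\bar\Phi$ to a morphism into a log resolution $M\to M_0$. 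That lift is only meromorphic a priori; resolving its indeterminacy introduces further blow-ups of $B$ along $Z$, and you then need to track how the exceptional divisors of that resolution interact with the $E_i,F_j$ and with the discrepancies $a_i,b_j$ to conclude that the net order of vanishing along (the proper transform of) $Z$ is strictly positive. This can be done, but it is substantially more work than the paper's three-line volume argument, and your sketch does not supply it. The payoff of the paper's approach is precisely that it sidesteps all of this by staying on the smooth fibres $M_i$.
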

\begin{proof}
Since $M_0\backslash S$ has smooth structure in $X$, we have $M_0\backslash S\subset\mathcal{R}$. Next we show the converse. We argue by contradiction. Suppose $p\in \mathcal{R}\backslash (M_0\backslash S)$, then there exists a sequence of points $p_t\in \Gamma_t$ such that $p_t\xrightarrow{d_{GH}} p$. By the $C^{1,\alpha}$ regularity of $(\mathcal{R},\omega_1)$, there exist $C,r>0$ independent of $t$ and a sequence of holomorphic coordinates on $B_{\omega_{t,\lambda}}(p_t,r)$ such that $C^{-1}\omega_E\leq \omega_{t,\lambda}\leq C\omega_E$. Denote $q=\dim_{\mathbb{C}}(\Gamma_t)$, then
$$
\Vol\big(\Gamma_t\cap B_{\omega_{t,\lambda}}(p_t,r)\big)=\int_{\Gamma_t\cap B_{\omega_{t,\lambda}}(p_t,r)}\omega_{t,\lambda}^q\geq
\int_{\Gamma_t\cap B_{\omega_E}(C^{-\frac{1}{2}}r)}(C^{-1}\omega_E)^q
$$
which has a positive lower bound. However this contradicts with the following argument
$$
\Vol\big(\Gamma_t\cap B_{\omega_{t,\lambda}}(p_t,r)\big)\leq \int_{\Gamma_t}\omega_{t,\lambda}^q=\int_{\Gamma_t}\omega_t^q
$$
which tends to $0$ as $t\rightarrow 0$.
\end{proof}
Next we will obtain some uniform $L^2$-estimates for $H^0(M_t,K_{M_t}^{-m})$. For a fixed $\lambda$, using the same notations in \cite{DS1}, we denote
$$
K_{M_t}^\sharp=K_{M_t}^{-m}, \ \  h_{t,\lambda}^\sharp=h_{t,\lambda}^m,  \ \ \omega_{t,\lambda}^\sharp=m\cdot \omega_{t,\lambda},   \ \ L^{p,\sharp}(M_t)=L^p(M_t,\omega_{t,\lambda}^\sharp),
$$
where $\omega_{t,\lambda}$ is twisted K\"{a}hler-Ricci soliton on each $M_t$ and $h_{t,\lambda}$ is the Hermitian metric on $K_{M_t}^{-1}$ with its curvature $\Ric(h_{t,\lambda})=r(\lambda)\omega_{t,\lambda}+(1-\lambda)\omega_{FS}$, i.e. $h_{t,\lambda}=e^{\theta_{t,\lambda}}\omega_{t,\lambda}^n$, where $\theta(\phi_{t,\lambda})=\theta_{M_t}+V_t(\phi_{t,\lambda})$. Let $\widetilde{g_{t,\lambda}}=e^{-\frac{1}{n-1}\theta(\phi_{t,\lambda})}g_{t,\lambda}$, then the estimate (\ref{e5.1}) implies that the Ricci curvature of $\widetilde{g_{t,\lambda}}$ has a uniform lower bound. Therefore, the Sobolev constant is uniform bounded for $\widetilde{g_{t,\lambda}}$, so it is for $g_{t,\lambda}$ as $\widetilde{g_{t,\lambda}}$ and $g_{t,\lambda}$ are uniformly equivalent. The same argument of Proposition 4.1 \cite{PSS} gives
\begin{enumerate}
\item Let $s$ be a holomorphic section of $H^0(M_t,K_{M_t}^\sharp)$, then there exist two constants $C1$, $C_2$ independent of $t$ such that
    $$
    ||s||_{L^{\infty,\sharp}}\leq C_1||s||_{L^{2,\sharp}} \ \ \textrm{and} \ \ ||\nabla s||_{L^{\infty,\sharp}}\leq C_2||s||_{L^{2,\sharp}}.
    $$
\item Assume that $\sigma$ is a $K_{M_t}^\sharp$-valued $(0,1)$-form and its $L^2$ inner product is defined by
    $$
    \int_{M_t}|\sigma|^2_{h_{t,\lambda}^\sharp,g_{t,\lambda}^\sharp}e^{\theta(\phi_{t,\lambda})}(\omega_{t,\lambda}^\sharp)^n.
    $$
    Denote $\bar{\partial}^*_{\theta(\phi_{t,\lambda})}$ by the adjoint operator of $\bar{\partial}$, then there exists a constant $A$ independent of $t$ such that $\bar{\partial}\bar{\partial}^*_{\theta(\phi_{t,\lambda})}+\bar{\partial}^*_{\theta(\phi_{t,\lambda})}\bar{\partial}\geq A$.
\end{enumerate}

The next definition comes from \cite{CDS2} and \cite{S1}.
\begin{definition}
Let $p\in X$ and $C(Y)$ be the tangent cone at $p$. We say that the tangent cone is good if the following hold:
\begin{enumerate}
\item the regular set $Y_{reg}$ is open in $Y$ and smooth,
\item the distance function on $C(Y_{reg})$ is induced by a Ricci flat K\"{a}hler metric,
\item for all $\eta>0$, there is a Lipschitz function $g$ on $Y$, equal to $1$ on a neighborhood of the singular set $S_Y\subset Y$, supported on the $\eta$-neighborhood of $S_Y$ and with $||\nabla g||_{L^2}\leq \eta$.
\end{enumerate}
\end{definition}
The argument of \cite{DS} (P1001) claims that all the tangent cones are good. So by the argument of \cite{DS1}, we have
\begin{theorem}\label{T5.1}
Let $\pi:\mathcal{M}\rightarrow\Delta$ be a $\mathbb{Q}$-Gorestein smoothing family and $\mathcal{V}$ be a reductive vector field on $\mathcal{M}$, which preserves the fibers. For $\lambda\in (1-m^{-1},1]$ there is a twisted K\"{a}hler-Ricci soliton $\omega_{t,\lambda}:=\omega_t+\sqrt{-1}\partial\bar{\partial}\phi_{t,\lambda}$ on each $M_t$ for $t\in \Delta$ with uniformly bounded $||\phi_{t,\lambda}||_{L^\infty}$ $(t\neq 0)$. Then $(M_t,\omega_{t,\lambda})$ converges to $(M_0,\omega_{0,\lambda})$ in the Gromov-Hausdorff topology as $t\rightarrow 0$.
\end{theorem}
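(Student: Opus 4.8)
The plan is to assemble the results established in this section and then to upgrade the metric identification of the limit to an algebraic one via the argument of \cite{DS1}.

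First I would fix an arbitrary sequence $t_i\to 0$. By Theorem B \cite{M1} the diameters of $(M_{t_i},\omega_{t_i,\lambda})$ are uniformly bounded, and by Theorem 6.2 \cite{WZ} --- using the uniform estimate $(\ref{e5.1})$ on $\theta(\phi_{t_i,\lambda})$ --- these metrics are uniformly non-collapsed. Gromov's precompactness theorem then gives, after passing to a subsequence, a Gromov-Hausdorff limit $(M_{t_i},\omega_{t_i,\lambda})\xrightarrow{d_{GH}}(X,d)$ with regular/singular decomposition $X=\mathcal{R}\cup\mathcal{S}$. Using the hypothesis that $\|\phi_{t,\lambda}\|_{L^\infty}$ is uniformly bounded for $t\neq 0$, Proposition \ref{P5.1} identifies $(X,d)$ isometrically with $\overline{(M_0\backslash S,\omega_{0,\lambda})}$, and the proposition following it shows $\mathcal{R}=M_0\backslash S$, where $\omega_{0,\lambda}$ is the unique twisted K\"ahler-Ricci soliton on the central fiber $M_0$ furnished by the hypothesis.

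The remaining point is to promote this isometry to an identification of $X$, as a polarized projective variety, with $(M_0,\omega_{0,\lambda})$, and then to dispose of the subsequence. For the first I would run the Donaldson-Sun scheme of \cite{DS1}: the conformally rescaled metrics $\widetilde{g_{t,\lambda}}=e^{-\frac{1}{n-1}\theta(\phi_{t,\lambda})}g_{t,\lambda}$ have a uniform lower Ricci bound, hence a uniform Sobolev constant, which transfers to $g_{t,\lambda}$; combined with the uniform sup- and gradient bounds for holomorphic sections of $K_{M_t}^{\sharp}$, the uniform spectral gap for $\bar{\partial}\bar{\partial}^*_{\theta(\phi_{t,\lambda})}+\bar{\partial}^*_{\theta(\phi_{t,\lambda})}\bar{\partial}$ recalled above, and the goodness of all tangent cones of $X$, one derives a uniform partial $C^0$-estimate. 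Then for $\ell$ large and divisible the sections of $K_{M_t}^{-m\ell}$ give $\mathcal{T}$-equivariant Kodaira embeddings $M_t\hookrightarrow\mathbb{CP}^{N_\ell}$ of uniformly bounded degree; passing to the limit, $X$ becomes a normal projective subvariety of $\mathbb{CP}^{N_\ell}$, and since these embeddings are the restrictions to the fibers of a re-embedding of the flat family $\mathcal{M}$ by $K_{\mathcal{M}/\Delta}^{-m\ell}$, flatness of $\pi$ forces the limit cycle to be $[M_0]$; normality of $M_0$ then produces a biholomorphism $X\cong M_0$ under which $d$ becomes the length metric of the soliton $\omega_{0,\lambda}$. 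Finally, since the limit $(M_0,\omega_{0,\lambda})$ does not depend on the chosen subsequence, a standard subsequence argument shows that the whole family $(M_t,\omega_{t,\lambda})$ converges to $(M_0,\omega_{0,\lambda})$ in the Gromov-Hausdorff topology as $t\to 0$.

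The hard part will be the uniform partial $C^0$-estimate together with the algebraic identification of the limit: one must produce enough uniformly controlled pluri-anticanonical holomorphic sections on the $M_t$ and pass them to $X$, which is exactly where the goodness of all tangent cones and the H\"ormander/Bochner estimates with Bakry-\'Emery Ricci lower bounds are indispensable. This is also the step in which the smoothability of $M_0$ is genuinely used.
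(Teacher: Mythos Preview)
Your proposal is correct and follows essentially the same route as the paper: the paper's ``proof'' of Theorem~\ref{T5.1} is the one-line invocation ``by the argument of \cite{DS1}'' after having assembled exactly the ingredients you list (diameter and non-collapsing bounds, Proposition~\ref{P5.1} and the identification $\mathcal{R}=M_0\backslash S$, the uniform $L^2$/Sobolev/H\"ormander estimates via the conformal metric $\widetilde{g_{t,\lambda}}$, and goodness of all tangent cones). One small remark: the paper disposes of the subsequence already in the Corollary to Proposition~\ref{P5.1}, since that proposition shows every subsequential limit is isometric to the same space $\overline{(M_0\backslash S,\omega_{0,\lambda})}$; also, in your algebraic identification step the $L^2$-orthonormal Kodaira embeddings are not literally restrictions of a single embedding of $\mathcal{M}$ but differ from the fixed flat-family embedding by elements of $GL$, whose convergence is controlled by the uniform $L^\infty$-bound on $\phi_{t,\lambda}$---this is the precise mechanism behind the flatness argument you sketch.
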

\begin{remark}
The same conclusion is true if $\lambda$ vary and stay bounded, i.e. $\lambda\in [\lambda_1,\lambda_2]$ where $1-m^{-1}<\lambda_1<\lambda_2<1$.
\end{remark}
\section{Existence of K\"{a}hler-Ricci solitons}
In this section we show the main theorem of this article by using the argument of section 4 \cite{SSY}. We define the following function:
$$
\lambda_t:=\sup\Big\{\lambda\in (1-\frac{1}{m},1]\Big|\exists \textrm{twisted K\"{a}hler-Ricci solitons on $M_t$ for all $\kappa\leq \lambda$} \Big\}.
$$
\begin{proposition}\label{P6.1}
If $(M_0,V_0)$ is K-stable, then the function $\lambda_t$ is lower semi-continuous on $\Delta$.
\end{proposition}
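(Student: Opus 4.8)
The plan is to prove lower semicontinuity of $t\mapsto\lambda_t$ at each $t_0\in\Delta$ separately for $t_0\neq 0$ and $t_0=0$, the second case being the substantive one. Recall that lower semicontinuity at $t_0$ is the statement $\liminf_{t\to t_0}\lambda_t\geq\lambda_{t_0}$; equivalently, for every $\lambda<\lambda_{t_0}$ there is a neighbourhood of $t_0$ on which $\lambda_t>\lambda$. Throughout I will use the machinery assembled in Sections 4 and 5: Proposition \ref{P4.1} (existence on every $M_t$ for $\kappa\leq\underline\lambda$), the uniform estimates of Lemmas \ref{L4.5} and \ref{L4.6} together with the uniform Mabuchi lower bound coming from Theorem \ref{T4.1} and the linearity in $\kappa$ of the twisted Mabuchi functional (as in the argument preceding Theorem \ref{T4.2}), and the Gromov--Hausdorff convergence of Theorem \ref{T5.1}.

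\emph{Case $t_0\neq 0$.} Here $M_{t_0}$ is a smooth Fano manifold and, after shrinking $\Delta$, the family $\{M_t\}$ near $t_0$ is a smooth family of complex structures on a fixed manifold, so the equation of Definition \ref{D3.1} depends smoothly on $(t,\kappa)$. Fix $\lambda<\lambda_{t_0}$. For each $\kappa\in[\underline\lambda,\lambda]$ a twisted K\"ahler--Ricci soliton exists on $M_{t_0}$ (by Proposition \ref{P4.1} for $\kappa\leq\underline\lambda$, and since $\kappa<\lambda_{t_0}$ for $\kappa\in(\underline\lambda,\lambda]$); as $r(\kappa)<1$ the twisting term rules out extra holomorphic vector fields, so, working $T$-equivariantly, the linearised operator is invertible and Theorem \ref{T3.2}(1) applies. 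The implicit function theorem then produces solitons at parameter $\kappa$ on $M_t$ for $t$ near $t_0$, and covering the compact interval $[\underline\lambda,\lambda]$ by finitely many such neighbourhoods gives a uniform $\delta$ with $\lambda_t\geq\lambda$ for $|t-t_0|<\delta$.

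\emph{Case $t_0=0$.} Fix $\lambda<\lambda_0$ and choose $\lambda<\lambda_1<\lambda_2<\lambda_0$, so that $M_0$ carries a twisted K\"ahler--Ricci soliton for every $\kappa\leq\lambda_2$. Let $\Sigma$ be the set of $\kappa\in[\underline\lambda,\lambda_1]$ for which there exists $\delta>0$ with twisted K\"ahler--Ricci solitons on $M_t$ for all $\kappa'\leq\kappa$ and all $|t|<\delta$. Then $\underline\lambda\in\Sigma$ by Proposition \ref{P4.1}, and it suffices to show $\Sigma$ is open and closed in $[\underline\lambda,\lambda_1]$: this gives $\lambda_1\in\Sigma$, hence $\lambda_t\geq\lambda_1>\lambda$ for $|t|<\delta$. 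For closedness, suppose solitons exist on $M_t$, $|t|<\delta$, for all $\kappa'<\kappa$; since solitons also exist on $M_0$ for $\kappa'<\kappa<\lambda_2<\lambda_0$, Theorem \ref{T4.1} applies and, with the linearity trick of Section 4.3, yields a uniform bound on $I_{V_t}(\phi_{t,\kappa'})$; Lemmas \ref{L4.5} and \ref{L4.6} then give uniform $L^\infty$ and local higher-order bounds, Theorem \ref{T5.1} gives $(M_t,\omega_{t,\kappa'})\to(M_0,\omega_{0,\kappa'})$, and letting $\kappa'\to\kappa$ produces a soliton at $\kappa$ on each $M_t$, $|t|<\delta$ (on $M_0$ it is already available), so $\kappa\in\Sigma$. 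For openness: at $M_0$ there is nothing to prove since $M_0$ has solitons up to $\lambda_2>\kappa$, and on the smooth fibres one again perturbs via the implicit function theorem as in the first case, the only issue being to keep the perturbation uniform in $t$ as $t\to 0$.

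\emph{The main obstacle} is exactly this uniformity of openness near the singular central fibre: one must control the inverse of the linearised operator on $(M_t,\omega_{t,\kappa})$ — equivalently, keep the coercivity constants of $M_{V_t,\kappa}$ from degenerating — while $M_t$ collapses in the Gromov--Hausdorff sense to the singular variety $M_0$. This is where K-stability of $(M_0,V_0)$ is used: it forces the limiting twisted K\"ahler--Ricci soliton on $M_0$ to be non-degenerate with discrete modified automorphism group, so that a uniform spectral gap (hence uniform properness) persists along the family by Theorem \ref{T5.1}. Concretely, if uniform openness failed one would extract, from the estimates of Section 4 and the limit theory of Section 5, a Gromov--Hausdorff limit which is a $T_0$-equivariant special degeneration of $M_0$ with $\Fut_{(1-\kappa)\alpha_0,V_0}\leq 0$; K-stability forces this degeneration to be biholomorphic to $M_0$, and then non-degeneracy of the soliton on $M_0$ allows the family continuity method to be pushed past $\kappa$, contradicting that $\kappa$ lies on the boundary of $\Sigma$. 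This closes the argument.
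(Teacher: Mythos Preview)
Your open/closed continuity argument in $\Sigma$ does not actually decompose the problem. Both steps run into the same uniformity obstruction, and your write-up of closedness already assumes it away: you begin ``suppose solitons exist on $M_t$, $|t|<\delta$, for all $\kappa'<\kappa$'' with a single $\delta$, but membership in $\Sigma$ for each $\kappa_n<\kappa$ only supplies a sequence $\delta_n$ that may shrink to $0$. Unwinding the definition, $\kappa\in\Sigma$ is precisely $\liminf_{t\to 0}\lambda_t\geq\kappa$, so showing $\Sigma=[\underline\lambda,\lambda_1]$ is literally the statement to be proved; the open/closed split does not buy anything unless you can produce a genuinely new mechanism at the boundary point.

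The more substantive gap is in the last paragraph. You correctly isolate ``uniform openness'' as the crux, but the proposed resolution is not what actually happens. You write that if uniformity fails one extracts ``a $T_0$-equivariant special degeneration of $M_0$ with $\Fut_{(1-\kappa)\alpha_0,V_0}\leq 0$'' and then uses K-stability to force this to be $M_0$. However, the Gromov--Hausdorff limits arising here are limits of smooth fibres $(M_{t_i},\omega_{t_i,\lambda_i})$ with $t_i\to 0$, $\lambda_i\to\lambda_\infty$, not limits along a $\mathbb{C}^*$-action on $M_0$; there is no direct way to read off a test configuration for $M_0$ from such a sequence, and K-stability is not invoked via a Futaki-sign argument. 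The paper instead argues by contradiction as follows: assuming $\lambda_\infty:=\liminf_{t\to 0}\lambda_t<\lambda_0$, one studies the set $\mathcal{C}$ of all Gromov--Hausdorff limits of $(M_{t_i},\omega_{t_i,\lambda_i})$ with $t_i\to 0$, $\lambda_i\to\lambda_\infty$, shows by a \emph{connectedness} argument (this is the key Lemma~\ref{L6.1}) combined with the $L^\infty$ bounds and Theorem~\ref{T5.1} that $\mathcal{C}$ consists of the single point $(M_0,\omega_{0,\lambda_\infty})$, and then reaches a contradiction by looking at a different family of limits: for each $t$, the Datar--Sz\'ekelyhidi limit $X_t$ of $(M_t,\omega_{t,\lambda})$ as $\lambda\uparrow\lambda_t$ has $\Aut^0(X_t,\widetilde{V_t},(1-\lambda_t)\beta_t)\neq\{1\}$, yet along a sequence $t_i\to 0$ with $\lambda_{t_i}\to\lambda_\infty$ these $X_{t_i}$ are forced, via $\mathcal{C}$, to converge to $(M_0,\omega_{0,\lambda_\infty})$. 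K-stability of $(M_0,V_0)$ enters only in identifying $\mathcal{C}$, through the $\lambda_i\to\lambda_\infty$ limit on $M_0$ itself (the argument on pp.~991--992 of \cite{DS}). Your sketch misses both the connectedness lemma and the role of the non-trivial automorphism group at the failure parameter, which are the two ideas that close the argument.
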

\begin{proof}
We only deal with the lower semi-continuous at $t=0$ and the other case is easier by the same argument. Suppose that $\lambda_t$ is not lower semi-continuous at $t=0$, i.e. $\liminf_{t\rightarrow 0}\lambda_t=\lambda_\infty<\lambda_0\leq 1$. Choosing an increasing sequence $\lambda_i<\lambda_\infty$ with $\lim_{i\rightarrow \infty}\lambda_i=\lambda_\infty$. For any $i$, the definition of $\lambda_t$ implies that there exists a twisted K\"{a}hler-Ricci soliton $\omega_{t,\lambda_i}$ on each $M_t$ when $|t|$ is small enough. There is a twisted K\"{a}hler-Ricci soliton $\omega_{0,\lambda}$ on $M_0$ for each $\lambda\in [\underline{\lambda},\lambda_0)$, where $\underline{\lambda}$ is defined in Proposition \ref{P4.1}. According to Theorem \ref{T5.1}, for each $i$, $(M_t,V_t,(1-\lambda_i)\omega_{FS},\omega_{t,\lambda_i})$ converges to $(M_0,V_0,(1-\lambda_i)\omega_{FS},\omega_{0,\lambda_i})$ in the Gromov-Hausdorff topology. Then, using the diagonal arguments as section 3 in \cite{DS}, by passing to a subsequence we have that $(M_0,V_0,(1-\lambda_i)\omega_{FS},\omega_{0,\lambda_i})$ converges to $(Y,\widetilde{V},(1-\lambda_\infty)\beta,\omega)$ in the Gromov-Hausdorff topology as $\lambda_i\rightarrow\lambda_\infty$, where $Y$ is a $\mathbb{Q}$-Fano variety, $\widetilde{V}$ is a holomorphic vector field, $\beta$ is a closed positive $(1,1)$-form and $\omega$ is a twisted K\"{a}hler-Ricci soliton. Note that $(M_0,V_0)$ is K-stable, so section 3 \cite{DS} (P991-992) implies that $(M_0,V_0,(1-\lambda_\infty)\omega_{FS})\cong (Y,\widetilde{V},(1-\lambda_\infty)\beta)$. Theorem \ref{T3.2} claims that $(M_0,V_0,(1-\lambda_\infty)\omega_{FS},\omega_{0,\lambda_\infty})\cong (Y,\widetilde{V},(1-\lambda_\infty)\beta,\omega)$.

Let $\mathcal{Z}$ be the space of all $(M_t,V_t,(1-\lambda)\omega_{FS},\omega_{t,\lambda})$ with $\lambda\in[\underline{\lambda},\lambda_t)$. Denote $\overline{\mathcal{Z}}$ by the closure of $\mathcal{Z}$ under the Gromov-Hausdorff convergence and $\mathcal{C}$ by the subspace of $\overline{\mathcal{Z}}\backslash \mathcal{Z}$ which consists of limits of some sequnece $(M_{t_i},V_{t_i},(1-\lambda_i)\omega_{FS},\omega_{t_i,\lambda_i})$ with $t_i\rightarrow 0$ and $\lambda_i\rightarrow\lambda_\infty$. By the argument of \cite{DS1}, we have an injective continuous map from $\overline{\mathcal{Z}}$ into $\mathrm{Ch}/U(N)$, where $\mathrm{Ch}$ denotes the Chow variety. We observe that $(M_0,V_0,(1-\lambda_\infty)\omega_{FS},\omega_{0,\lambda_\infty})$ is in $\mathcal{C}$.
\begin{lemma}\label{L6.1}
We have
$$
\mathcal{C}=\big\{(M_0,V_0,(1-\lambda_\infty)\omega_{FS},\omega_{0,\lambda_\infty})\big\}
$$
\end{lemma}
\begin{proof}
First, we claim that there is an open neighborhood $\mathcal{U}$ of $(M_0,V_0,(1-\lambda_\infty)\omega_{FS},\omega_{0,\lambda_\infty})$ such that $\mathcal{C}\cap\mathcal{U}=\big\{(M_0,V_0,(1-\lambda_\infty)\omega_{FS},\omega_{0,\lambda_\infty})\big\}$. Otherwise, we can choose a sequence $\big\{(Y^i,V_{Y^i},(1-\lambda_\infty)\beta_{Y^i},\omega_{Y^i})\big\}^\infty_{i=1}\subset \mathcal{C}$ converging in the Gromov-Hausdorff topology to $(M_0,V_0,(1-\lambda_\infty)\omega_{FS},\omega_{0,\lambda_\infty})$. Take some sequence $(M_{t_j^i},V_{t_j^i},(1-\lambda_j)\omega_{FS},\omega_{t_j^i,\lambda_j})$ such that $(Y^i,V_{Y^i},(1-\lambda_\infty)\beta_{Y^i},\omega_{Y^i})$ is the Gromov-Hausdorff limit as $t_j^i\rightarrow 0$ and $\lambda_j\rightarrow \lambda_\infty$ for each $i$. For any sequence $\{t^k_{j_k}\}_{k=1}^\infty$ converging to $0$, we have that $(M_{t^k_{j_k}},V_{t^k_{j_k}},(1-\lambda_{j_k})\omega_{FS},\omega_{t_{j_k}^k,\lambda_{j_k}})$ converges to $(M_0,V_0,(1-\lambda_\infty)\omega_{FS},\omega_{0,\lambda_\infty})$, which implies the functional $I(\phi_{t_{j_k}^k,\lambda_{j_k}})\rightarrow I(\phi_{0,\lambda_\infty})$ as $k\rightarrow \infty$. Thus, $\phi_{t_{j}^i,\lambda_{j}}$ is uniformly bounded in $L^\infty$ for all $i$ and $j$. Furthermore, $(M_0,V_0,(1-\lambda_\infty)\omega_{FS},\omega_{0,\lambda_\infty})\cong (Y^i,V_{Y^i},(1-\lambda_\infty)\beta_{Y^i},\omega_{Y^i})$ for each $i$ according to Theorem \ref{T5.1}.

Second, define a family $\mathcal{C}_\alpha:=\bigcup_{0<|t|<\alpha}\big\{(M_t,V_t,(1-\lambda)\omega_{FS},\omega_{t,\lambda})|\lambda\in(\lambda_t-\alpha,\lambda_t)\big\}$
indexed by $\alpha\in(0,1)$, which is precompact in the Gromov-Hausdorff topology. Clearly, $\lim_{\alpha\rightarrow 0}\mathcal{C}_\alpha=\mathcal{C}$ and each $\mathcal{C}_\alpha$ is path-connected. Lemma 4.3 \cite{SSY} claims that $\mathcal{C}$ is connected. So we complete the proof of this lemma.
\end{proof}

By the definition of $\lambda_t$, we let $\lambda$ tends to $\lambda_t$, then by \cite{DS}, $(M_t,V_t,(1-\lambda)\omega_{FS},\omega_{t,\lambda})$ converges by subsequence to some limit $(X_t,\widetilde{V_t},(1-\lambda_t)\beta_t,\overline{\omega_t})$ such that  $\Aut^0(X_t,\widetilde{V_t},(1-\lambda_t)\beta_t,\overline{\omega_t})$ is non-trivial. Choose $\lambda_{t_i}\rightarrow\lambda_\infty$ and $t_i\rightarrow 0$ $(\liminf_{t\rightarrow 0}\lambda_t=\lambda_\infty)$, the limiting sequence $(X_{t_i},\widetilde{V_{t_i}},(1-\lambda_{t_i})\beta_{t_i},\overline{\omega_{t_i}})$ converges by subsequence to $(M_0,V_0,(1-\lambda_\infty)\omega_{FS},\omega_{0,\lambda_\infty})$ due to the structure of $\mathcal{C}$. This is a contradiction with Lemma \ref{L6.1}.
\end{proof}
Next we prove the main theorem of this article.
\begin{theorem}
Suppose that $(M_0,V_0)$ is K-stable, then there exists a K\"{a}hler-Ricci soliton on $M_0$.
\end{theorem}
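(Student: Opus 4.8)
The plan is to run Aubin's continuity method in the parameter $\lambda$, using the function $\lambda_t$ and the estimates of Sections 2--6. First I would record that the continuity set is non-empty: by Proposition \ref{P4.1} there is $\underline{\lambda}>1-m^{-1}$ such that $M_0$ carries a twisted K\"{a}hler-Ricci soliton for every $\lambda\in(1-m^{-1},\underline{\lambda}]$, hence $\lambda_0\geq\underline{\lambda}>1-m^{-1}$. It then suffices to prove $\lambda_0=1$ and that the supremum defining $\lambda_0$ is attained. Indeed $r(1)=1-(1-1)m=1$ and the twisting term $(1-\lambda)\omega_{FS}$ vanishes at $\lambda=1$, so a twisted K\"{a}hler-Ricci soliton on $M_0$ at $\lambda=1$ is a current $\omega_\phi=\alpha_0+\sqrt{-1}\partial\bar{\partial}\phi$ with $e^{\theta_{M_0}+V_0(\phi)}\omega_\phi^n=e^{-\phi}\nu/\int_{M_0}e^{-\phi}\nu$, which is exactly the K\"{a}hler-Ricci soliton equation $\Ric(\omega_\phi)-L_{V_0}\omega_\phi=\omega_\phi$ on $M_0$.

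For the openness step I would argue that whenever $M_0$ admits a twisted K\"{a}hler-Ricci soliton at some $\lambda<1$, it admits one for all $\lambda'$ in a right-neighbourhood of $\lambda$. This is the standard openness in Aubin's continuity method (following \cite{DS}, \cite{S1}): when $\lambda<1$ the twist $(1-\lambda)\omega_{FS}$ is strictly positive, which rigidifies the reductive automorphisms compatible with $V_0$, so the linearisation of the twisted soliton equation at a solution is invertible and the implicit function theorem applies. Consequently $\lambda_0<1$ would force the supremum $\lambda_0$ to fail to be attained.

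The heart of the matter is the closedness step, and this is where the $\mathbb{Q}$-Gorenstein smoothing and the K-stability hypothesis enter. Fix $\lambda_j\uparrow\lambda_0$ with twisted K\"{a}hler-Ricci solitons $\omega_{0,\lambda_j}$ on $M_0$. By Proposition \ref{P6.1} the function $\lambda_t$ is lower semi-continuous at $0$, so for each $j$ there is a punctured disc on which $M_t$ carries twisted K\"{a}hler-Ricci solitons $\omega_{t,\lambda_j}$, and by Theorem \ref{T5.1} (with the $L^\infty$-bounds of Theorem \ref{T4.2}) $\omega_{t,\lambda_j}\to\omega_{0,\lambda_j}$ in the Gromov--Hausdorff topology as $t\to0$. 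A diagonal sequence $(M_{t_j},\omega_{t_j,\lambda_j})$ then consists of twisted solitons on smooth Fano manifolds with uniformly bounded Bakry--\'{E}mery Ricci curvature (two-sided when $\lambda_0<1$; bounded below after choosing $\lambda_j<1$ when $\lambda_0=1$), so by the smooth Riemannian convergence theory of Wang--Zhu \cite{WZ} and Datar--Sz\'{e}kelyhidi \cite{DS} it subconverges to a twisted K\"{a}hler-Ricci soliton $(Y,\widetilde{V},(1-\lambda_0)\beta,\omega_Y)$ with $Y$ a $\mathbb{Q}$-Fano variety. The analysis of the twisted Futaki invariant in \cite{DS} (P991--992) together with K-stability of $(M_0,V_0)$ forces $(Y,\widetilde{V},(1-\lambda_0)\beta)\cong(M_0,V_0,(1-\lambda_0)\omega_{FS})$, and Theorem \ref{T3.2} upgrades this to an isomorphism of the solitons; thus a twisted K\"{a}hler-Ricci soliton exists on $M_0$ at $\lambda=\lambda_0$. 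This is, in essence, the content already isolated in the proof of Proposition \ref{P6.1}, via the spaces $\mathcal{Z}$, $\overline{\mathcal{Z}}$, $\mathcal{C}$ and Lemma \ref{L6.1}.

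Assembling: if $\lambda_0<1$, closedness produces a twisted K\"{a}hler-Ricci soliton on $M_0$ at $\lambda_0$ and openness then produces one for $\lambda$ slightly larger than $\lambda_0$, contradicting the definition of $\lambda_0$ as a supremum; hence $\lambda_0=1$. Applying the closedness step once more at $\lambda_0=1$ (using $\lambda_j<1$ so the Bakry--\'{E}mery Ricci stays uniformly bounded below) shows the supremum is attained, and the resulting twisted K\"{a}hler-Ricci soliton at $\lambda=1$ is the desired K\"{a}hler-Ricci soliton on $M_0$. I expect the main obstacle to be the closedness step: controlling the geometry of the twisted solitons near the singular set of $M_0$ as $\lambda\to\lambda_0$ (and as $\lambda_0\to1$), which is exactly why smoothability is assumed -- so that the convergence theory with Bakry--\'{E}mery Ricci bounded below applies to the nearby smooth fibers -- and why K-stability is needed to pin the limit to $M_0$ itself rather than to some other $\mathbb{Q}$-Fano degeneration.
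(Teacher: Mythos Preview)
Your closedness argument is essentially the paper's: pass to a diagonal sequence on the smooth fibers, invoke the compactness theory of \cite{WZ} and \cite{DS}, and use K-stability (via the argument on pages 991--992 of \cite{DS}) to identify the Gromov--Hausdorff limit with $M_0$. That part is fine.

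The gap is in your openness step. You write that ``the linearisation of the twisted soliton equation at a solution is invertible and the implicit function theorem applies,'' citing \cite{DS} and \cite{S1}. But those references run the implicit function theorem on a \emph{smooth} Fano manifold; here $M_0$ is a singular $\mathbb{Q}$-Fano variety, the twisted soliton is only a current with full Monge--Amp\`ere mass in the sense of Definition~\ref{D3.1}, and there is no off-the-shelf implicit function theorem for the degenerate Monge--Amp\`ere equation on such a space. This is precisely the difficulty that the smoothability hypothesis is designed to circumvent, and you are not using it here.

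The paper's openness argument is quite different from yours and, like the closedness, goes through the smooth fibers. Given $\lambda\in\Lambda$ one has $\lambda<\lambda_0$; Proposition~\ref{P6.1} (lower semi-continuity of $\lambda_t$, which itself uses K-stability) then gives $\tilde\lambda$ with $\lambda<\tilde\lambda<\lambda_t$ for all small $|t|$, so twisted solitons $\omega_{t,\lambda'}$ exist on the \emph{smooth} $M_t$ for every $\lambda'\in[\lambda,\tilde\lambda]$. The $L^\infty$ estimates and Gromov--Hausdorff convergence of Sections~4 and~5 then produce the twisted soliton $\omega_{0,\lambda'}$ on $M_0$ as $t\to0$, showing $\tilde\lambda\in\Lambda$. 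In short, both openness and closedness are obtained by taking limits from the smooth fibers; your proposal does this only for closedness and tries to handle openness intrinsically on $M_0$, which is not justified.
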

\begin{proof}
We define a set
$$
\Lambda:=\{\lambda\leq 1|\textrm{there exists a twisted K\"{a}hler-Ricci soliton on $M_0$ for each $\kappa\leq\lambda$}\}.
$$
By Proposition \ref{P4.1}, it suffices to show that $\Lambda$ is both open and closed in $[\underline{\lambda},1]$.

First, we prove the openness. For any $\lambda\in\Lambda$, by the definition of $\Lambda$, we have a twisted K\"{a}hler-Ricci soliton on $M_0$ for each $\kappa\leq\lambda$, so $\lambda<\lambda_0$. Thus, $\lambda_t>\lambda$ for $|t|$ small enough since $\lambda_t$ is lower semi-continuous. We can choose a number $\tilde{\lambda}$ such that $\lambda_t>\tilde{\lambda}>\lambda$ for $|t|$ small enough. For $\lambda'\in [\lambda,\tilde{\lambda}]$, the arguments of section 4 and 5 imply that $(M_t,V_t,(1-\lambda')\omega_{FS},\omega_{t,\lambda'})$ converges to $(M_0,V_0,(1-\lambda')\omega_{FS},\omega_{0,\lambda'})$. Thus, $\Lambda$ is open.

Second, we prove the closedness. Take any sequence $\{\lambda_i\}_{i=1}^\infty\subset \Lambda$ which strictly increases to $\lambda_\infty$. Since $\lambda_t$ is lower semi-continuous, for any $i$, $\lambda_t>\lambda_i$ when $|t|$ is small enough. Furthermore, $(M_0,V_0,(1-\lambda_i)\omega_{FS},\omega_{0,\lambda_i})$ is the Gromov-Hausdorff limit of $(M_t,V_t,(1-\lambda_i)\omega_{FS},\omega_{t,\lambda_i})$ as $t\rightarrow 0$ by Theorem \ref{T5.1}. The diagonal argument claims that by passing to a subsequence we have that $(M_0,V_0,(1-\lambda_i)\omega_{FS},\omega_{0,\lambda_i})$ converges to $(Y,\widetilde{V},(1-\lambda_\infty)\beta,\omega)$ as $\lambda_i\rightarrow \lambda_\infty$. The condition that $(M_0,V_0)$ is K-stable gives that $(Y,\widetilde{V},(1-\lambda_\infty)\beta,\omega)\cong (M_0,V_0,(1-\lambda_\infty)\omega_{FS},\omega_{0,\lambda_\infty})$ according to the argument of \cite{DS} (P991-992). This implies that $\Lambda$ is closed.
\end{proof}
\bigskip

\noindent {\bf{Acknowledgements:}} The author was supported by a grant from the Fundamental Research Funds for the Central Universities. The author also thanks Yi Yao for his enthusiastic discussion.

\bigskip

\footnotesize

\end{document}